\numberwithin{equation}{section}
\theoremstyle{plain}
\newtheorem{prop}{Proposition}[section]
\newtheorem{theo}[prop]{Theorem}
\newtheorem{lemm}[prop]{Lemma}
\theoremstyle{remark}
\newtheorem{rema}[prop]{Remark}
\theoremstyle{definition}
\newtheorem{defi}[prop]{Definition}
\newtheorem{exam}[prop]{Example}
\numberwithin{equation}{section}
\newcommand{\A}{{\mathbb A}}
\newcommand{\PP}{{\mathbb P}}
\newcommand{\Q}{{\mathbb Q}}
\newcommand{\F}{{\mathbb F}}
\newcommand{\G}{{\mathbb G}}
\newcommand{\N}{{\mathbb N}}
\newcommand{\R}{{\mathbb R}}
\newcommand{\Z}{{\mathbb Z}}
\newcommand{\cE}{{\mathcal E}}
\newcommand{\cF}{{\mathcal F}}
\newcommand{\cG}{{\mathcal G}}
\newcommand{\cO}{{\mathcal O}}
\newcommand{\cI}{{\mathcal I}}
\newcommand{\ra}{\rightarrow}
\newcommand{\eqto}{\stackrel{\lower1.5pt\hbox{$\scriptstyle\sim\,$}}\to}
\DeclareMathOperator{\Gal}{Gal}
\DeclareMathOperator{\Pic}{Pic}
\DeclareMathOperator{\Spec}{Spec}
\DeclareMathOperator{\End}{End}
\DeclareMathOperator{\Br}{Br}
\DeclareMathOperator{\Isom}{Isom}
\DeclareMathOperator{\Proj}{Proj}
\DeclareMathOperator{\Sym}{Sym}
\DeclareMathOperator{\chara}{char}
\begin{document}
\title[Brauer-Severi surface bundles]{Models of Brauer-Severi surface bundles}
\author{Andrew Kresch}
\address{
  Institut f\"ur Mathematik,
  Universit\"at Z\"urich,
  Winterthurerstrasse 190,
  CH-8057 Z\"urich, Switzerland
}
\email{andrew.kresch@math.uzh.ch}
\author{Yuri Tschinkel}
\address{
  Courant Institute,
  251 Mercer Street,
  New York, NY 10012, USA
}

\address{Simons Foundation, 160 Fifth Av., New York, NY 10010, USA}
\email{tschinkel@cims.nyu.edu}

\date{August 20, 2017} 

\maketitle

\section{Introduction}
\label{sec:introduction}

This paper is motivated by the study of rationality properties of 
Mori fiber spaces. These are algebraic varieties, naturally occurring
in the minimal model program, typically via
contractions along extremal rays; they are
fibrations  with {\em geometrically} rational generic fiber.
Widely studied are conic bundles $\pi: X\to S$, when the generic fiber is a conic. 
According to the Sarkisov program \cite{sarkisovconicbundle}, 
there exists a birational modification

\centerline{
\xymatrix{ 
\widetilde{X} \ar_{\tilde{\pi}}[d] \ar@{-->}[r] & X\ar^{\pi}[d]\\
\widetilde{S} \ar[r] & S
}
}

\noindent
such that: (i) the general fiber of $\tilde{\pi}$ is a smooth
conic, (ii) the discriminant divisor of $\tilde{\pi}$ is a simple normal crossing divisor,
(iii) generally along the discriminant divisor, 
the fiber is a union of two lines,
and (iv) over the singular locus of the 
discriminant divisor, the fiber is a
double line in the plane.

Rationality of conic bundles over surfaces
has been classically studied by Artin and Mumford \cite{AM}, who produced
examples of nonrational unirational threefolds of this type, 
computing their Brauer groups and using its nontriviality as an obstruction to rationality. 
This was generalized to higher-dimensional quadric bundles 
in \cite{CTO}, bringing higher unramified cohomology into the subject and providing 
further examples of nonrational varieties based on these new obstructions.

The specialization method, introduced by Voisin \cite{voisin} and developed further by Colliot-Th\'el\`ene--Pirutka \cite{CTP}, emerged as a powerful tool in the study of {\em stable rationality}. It allows to relate
the failure of stable rationality of a very general member of a flat family to 
the existence of special fibers with nontrivial unramified cohomology and mild singularities; 
see also
\cite{beauvillesexticdouble}, \cite{totarojams}.
In particular, the stable rationality problem for very general 
smooth rationally connected threefolds
can be reduced to the case of conic bundles over rational surfaces 
\cite{HTfano}, \cite{krylovokada}. 
The case of very general families of conic bundles 
over rational surfaces is treated in \cite{HKTconic}.
Many families that arise naturally in applications lead to
discriminant curves of a special form. These have been studied, e.g., in \cite{BGvB} and \cite{HTfano}.

A key step in the specialization method is the construction of families of Mori fiber spaces 
with controlled degeneration.
A next case to consider is a fibration whose generic fiber is a form of $\PP^2$.

\begin{defi}
\label{defn:P2bundle}
Let $k$ be a perfect field of characteristic different from $3$
and $\pi\colon X\to S$ a morphism of smooth projective varieties
over $k$.
We call $\pi$ a \emph{standard Brauer-Severi surface bundle} if
there exists a simple normal crossing divisor $D\subset S$ whose
singular locus $D^{\mathrm{sing}}$ is smooth, such that:
\begin{itemize}
\item $\pi$ is smooth over $S\smallsetminus D$ and the generic fiber of
$\pi$ is a nontrivial form of $\PP^2$ over $k(S)$.
\item Over every geometric point of $D\smallsetminus D^{\mathrm{sing}}$ the fiber of
$\pi$ is a union of three Hirzebruch surfaces $\F_1$, meeting transversally,
such that any pair of them meets along a fiber of one and the
$(-1)$-curve of the other, while over the generic point of every irreducible component
of $D$ the fiber of $\pi$ is irreducible.
\item Over every geometric point of $D^{\mathrm{sing}}$ the fiber of
$\pi$ is an irreducible scheme whose underlying reduced subscheme is
isomorphic to the cone over a twisted cubic curve.
\end{itemize}
\end{defi}

We note that such $\pi\colon X\to S$ is necessarily flat.
There is a geometric description, which in the case of
algebraically closed base field
is due to Artin \cite{artinleft}, \cite{artinlocal}, and Maeda \cite{maeda}:

\begin{theo}
\label{thm:P2bundle}
Let $k$ be a perfect field of characteristic different from $3$
and $S$ a smooth projective algebraic variety over $k$.
Assume that over $k$ the following holds:
embedded resolution of singularities for reduced subschemes of $S$
of pure codimension $1$,
and desingularization
for reduced finite-type schemes of pure dimension equal to $\dim(S)$
that is functorial with respect to \'etale morphisms.
Let
\[
\pi\colon X\ra S
\] 
be a morphism of projective varieties over $k$, whose 
generic fiber is a nontrivial form of $\PP^2$.
Then there exists a commutative diagram
\[
\xymatrix{ 
\widetilde{X} \ar[d]_{\tilde{\pi}} \ar@{-->}[r]^{\varrho_X}& X\ar[d]^{\pi}\\
\widetilde{S} \ar^{\varrho_S}[r] & S
}
\]
where if we let $U\subset S$ denote the locus over which
$\pi$ is smooth,
\begin{itemize}
\item $\varrho_S$ is a birational morphism that restricts to an
isomorphism over $U$,
\item $\varrho_X$ is a birational map that restricts to an isomorphism
over $\varrho_S^{-1}(U)\to U$, and
\item $\tilde \pi$ is a standard Brauer-Severi surface bundle.
\end{itemize}
\end{theo}

\begin{rema}
\label{rema.surfacespositivechara}
When $\dim(S)=2$, only the embedded resolution for
curves on $S$ is required for the proof of Theorem \ref{thm:P2bundle}.
However, functorial desingularization of surfaces is known even in
positive characteristic \cite{cossartjannsensaito}.
We have decided not to mention the dimension of $S$ when stating
the hypotheses concerning resolution of singularities.
\end{rema}

In this paper we describe a technique based on root stacks,
appearing in \cite{HKTconic},
that allows us to recover the Sarkisov program and its
version by Artin and Maeda (Theorem \ref{thm:P2bundle})
and obtain an extension to more general del Pezzo surface fibrations.
The proof of Theorem \ref{thm:P2bundle} illustrates this technique, which
leads to a global version of the constructions of Artin and Maeda that is
crucial in moduli problems and in applications of specialization.
As an application to rationality problems we prove:

\begin{theo}
\label{dp2thm}
Let $S$ be a smooth del Pezzo surface of degree $2$ over an
uncountable algebraically closed field $k$ of
characteristic $\ne 2$, $3$, and let $d\ge 3$.
Then a standard Brauer-Severi surface bundle
corresponding to a very general member of the linear system
$|d(-K_S)|$ is not stably rational.
\end{theo}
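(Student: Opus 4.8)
The plan is to run the specialization (degeneration) method of Voisin \cite{voisin} and Colliot-Th\'el\`ene--Pirutka \cite{CTP}. I would organize the standard Brauer-Severi surface bundles with discriminant in $|d(-K_S)|$ into a flat projective family over an open subset of the linear system, with smooth very general member (a fourfold, since $\dim S=2$ and the fibers are surfaces). By the specialization theorem it then suffices to produce a single special member $X_0$ that (i) carries a nonzero unramified cohomology class and (ii) admits a universally $\mathrm{CH}_0$-trivial resolution of singularities: the existence of such an $X_0$ forces the very general member to be not stably rational. Because a nontrivial form of $\PP^2$ corresponds to a $3$-torsion class in the Brauer group, the natural invariant is the $3$-torsion of the unramified Brauer group $H^2_{\mathrm{nr}}(k(X_0),\mu_3)$; the whole argument is the degree-$3$ analogue, with forms of $\PP^2$ in place of quadric surfaces, of the Hassett--Pirutka treatment of quadric surface bundles over $\PP^2$.

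For property (ii) I would use Theorem \ref{thm:P2bundle} to pass to the standard model $\tilde\pi\colon\widetilde X_0\to S$ of the special member, a smooth fourfold whose degenerate fibers are transversal unions of three copies of $\F_1$ over $D_0\smallsetminus D_0^{\mathrm{sing}}$ and cones over twisted cubics over $D_0^{\mathrm{sing}}$. All of these are rational surfaces, so a resolution of the singular total space of $X_0$ has only $\mathrm{CH}_0$-trivial fibers and is therefore universally $\mathrm{CH}_0$-trivial by the fiberwise criterion of Colliot-Th\'el\`ene--Pirutka \cite{CTP}; the smooth model $\widetilde X_0$ is where the invariant in (i) is computed. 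Working over a del Pezzo surface of degree $2$ with $d\ge 3$ is what makes room, inside $|d(-K_S)|$, both for a smooth very general member with irreducible discriminant and for the special degenerate configuration needed in (i).

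The heart of the proof is property (i). I would choose a degenerate discriminant $D_0\in|d(-K_S)|$ together with cyclic triple-cover (ramification) data so that the generic-fiber class $\alpha\in\Br(k(S))[3]$, although it dies in $\Br(k(X_0))$ by Amitsur's theorem, nonetheless produces a nonzero unramified class on $\widetilde X_0$. Concretely one presents $\alpha$ as a degree-$3$ cyclic symbol, reads off its residues along the components of $D_0$ from the three-$\F_1$ local model of Definition \ref{defn:P2bundle}, and verifies by a direct Gersten/Bloch--Ogus residue computation---the degree-$3$ counterpart of the calculations of Artin--Mumford \cite{AM} and of Hassett--Pirutka---that every residue of the candidate class vanishes while the class itself is nonzero. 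The hypotheses $\chara k\ne 2,3$ guarantee that $\mu_3$ is \'etale and that the double-cover structure of $S$ over $\PP^2$ is available, and uncountability of $k$ is what gives content to ``very general'' in the specialization step.

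The main obstacle I expect is exactly this residue calculation for the special member: arranging $D_0$ and its triple cover so that the local behavior at $D_0^{\mathrm{sing}}$, where the fibers are cones over twisted cubics, is compatible with a globally unramified, nonzero $\mu_3$-class, and simultaneously checking that this special bundle deforms flatly to the smooth very general member of $|d(-K_S)|$. Once (i) and (ii) are established, the specialization theorem of \cite{voisin,CTP} yields that a very general standard Brauer-Severi surface bundle with discriminant in $|d(-K_S)|$ is not stably rational.
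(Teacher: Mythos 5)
Your high-level frame (the specialization theorem, conditions (O) and (R)) is the same as the paper's, but both conditions are left with genuine gaps exactly where the content lies. For (O), your account is internally inconsistent and misses the key construction. You write that $\alpha$ dies in $\Br(k(X_0))$ yet ``produces a nonzero unramified class'' --- a class killed by pullback produces nothing. The nonzero unramified class must be $\pi^*\beta$ for an \emph{auxiliary} class $\beta\in\Br(k(S))[3]$ with $\beta\notin\langle\alpha\rangle$, whose residue along each curve lies in the subgroup generated by the corresponding residue of $\alpha$. For such a $\beta$ to exist and survive, the special discriminant must be \emph{reducible}, with nontrivial residue on each of at least two components and covers \'etale over the nodes: if the discriminant were an irreducible curve $D$, then any candidate $\beta$ would have $\partial_C\beta=0$ for $C\ne D$ (the generic fiber is geometrically integral, so $H^1(k(C),\Z/3)$ injects into $H^1$ of the function field of the fiber) and $\partial_D\beta=j\,\partial_D\alpha$ for some $j$, so $\beta-j\alpha$ would be unramified on $S$, hence zero since $\Br(S)=0$ for a rational surface (Remark \ref{rem.purity}); thus $\pi^*\beta=0$. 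Your proposal never produces --- or even identifies the need for --- a two-component configuration; this is precisely what the paper's Steps 1--2 build: a nonsingular cubic $D_0\subset\PP^2$ tangent to the branch quartic $R$ at six points lying on a conic (found via syzygetic triples of bitangents and Max Noether's theorem), so that its preimage under the double cover $S\to\PP^2$ \emph{splits} as $D_1\cup D_2$, each component carrying the nontrivial cyclic degree-$3$ cover pulled back from $\widetilde{D}_0\to D_0$. A ``direct residue computation'' for an unspecified degenerate $D_0$ cannot succeed without this.

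For (R), your argument is a non sequitur: universal $\mathrm{CH}_0$-triviality of a resolution $\widetilde{X}_0\to X_0$ concerns the fibers of the \emph{resolution morphism} over the scheme points of $X_0$, not the fibers of $\tilde\pi$ over $S$; rationality of the three-$\F_1$ unions and of cones over twisted cubics is irrelevant to it. The paper instead uses that the singularities of $X_0$ lie over the nodes of $D_1\cup D_2$ and are of toric type, hence admit a universally $\mathrm{CH}_0$-trivial resolution. Relatedly, (R) is a condition on the actual special fiber of the flat family, so one cannot simply substitute a standard model obtained from Theorem \ref{thm:P2bundle}; and the flat family itself, which you flag only as an expected ``obstacle,'' is exactly what requires the paper's machinery: spreading the class out to a locally free sheaf on a $\mu_3$-gerbe over the root stack $\sqrt[3]{(S,D)}$, deforming it (via \cite{HKTconic}) over an \'etale neighborhood $B$ of $b_0$ in the space of reduced nodal curves equipped with cyclic degree-$3$ \'etale covers, and applying Proposition \ref{prop.DP9contraction} to obtain $\mathcal{X}\to B\times S$ with the singular special fiber and the smooth standard very general fibers in a single flat projective family.
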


\noindent {\bf Acknowledgments:} We are grateful to Brendan Hassett and Alena Pirutka 
for stimulating discussions on related topics and to
Asher Auel for helpful comments.
The first author is partially supported by the
Swiss National Science Foundation.
The second author is partially supported by NSF grant 1601912.

\section{Preliminaries}
\label{sect:prelim}

We start with some algebraic results that will be used in our approach.
Let $k$ be a field.
By a \emph{variety} over $k$ we mean a geometrically integral separated
finite-type scheme over $k$.
We will need, however, the generality of locally Noetherian schemes and
Deligne-Mumford stacks.
This additional generality allows us to construct models
of Brauer-Severi bundles in which all of the fibers are smooth.
This comes at the cost of imposing nontrivial stack structure on the base.

\subsection{Deligne-Mumford stacks}
\label{ss.DM}
Deligne and Mumford \cite{DM} defined a class of stacks which includes all stack
quotients of the form $[X/G]$, where $X$ is a variety and $G$ a finite group, and
which are now called Deligne-Mumford stacks.
The stack $[X/G]$ differs from the conventional quotient $X/G$
(which exists as a variety, e.g., when $X$ is quasi-projective) in that
it keeps track of the stabilizers of the $G$-action.
A further advantage is that $[X/G]$ has the same local properties
(smoothness, etc.) as $X$.
This will be crucial for us, as we will rely on properties of smooth varieties.
Most important for us will be \emph{orbifolds} over $k$, which are
smooth geometrically integral separated finite-type Deligne-Mumford stacks over $k$
that possess an open substack isomorphic to a scheme.

As a stack, a scheme $X$ is encoded by the category of all schemes with
morphism to $X$, and $[X/G]$, as $G$-torsors $E\to T$ with
equivariant morphism $E\to X$.
When $X$ is a point (i.e., $X=\Spec(k)$ if we work over a base field $k$,
or $\Spec(\Z)$ if we work with all schemes),
$[X/G]$ is the category $BG$ of
$G$-torsors, the \emph{classifying stack} of $G$.
We observe, the diagonal of $BG$ is a finite \'etale morphism of degree $|G|$.

An infinite discrete group $G$ is also permitted according to
modern usage of the term Deligne-Mumford stack, leading to pathologies such as
zero-dimensional Deligne-Mumford stack
$[\Spec(\overline{\Q}\otimes_{\Q}\overline{\Q})/H]$ for an index $2$
non-open subgroup $H$ of $\Gal(\overline{\Q}/\Q)$,
which exists (by \cite[Thm.\ 1]{smithwilson}), is
reduced (by \cite[4.3.5]{EGAIV}), and is irreducible with $2$ points
(objects over $\Spec(\Omega)$ for all fields $\Omega$, modulo morphisms over
$\Spec(\Omega')\to \Spec(\Omega)$ for embeddings $\Omega\to\Omega'$).

When we restrict to Noetherian Deligne-Mumford stacks, i.e.,
quasi-compact locally Noetherian Deligne-Mumford stacks with
quasi-compact diagonal, such pathologies are excluded.

More general algebraic stacks such as $BG$ for positive-dimensional
algebraic groups or nonreduced group schemes $G$ (Artin stacks) are not needed here.

\subsection{Gerbes}
\label{ss.gerbes}
Let $X$ be a Noetherian Deligne-Mumford stack and $n$ a positive integer,
invertible in the local rings of an \'etale atlas of $X$.
A gerbe over $X$ banded by roots of unity $\mu_n$, or just $\mu_n$-gerbe, is a
Deligne-Mumford stack $G$ with morphism $G\to X$ that \'etale locally over $X$
is isomorphic to a product with the classifying stack $B\mu_n$ and is equipped
with compatible identifications of the automorphism groups of local sections
with $\mu_n$.

A $\mu_n$-gerbe is classified, up to isomorphism compatible with the
identifications of automorphism groups of local sections, by a class in
$H^2(X,\mu_n)$; cf.\ \cite[\S IV.2]{milne}.

Vector bundles on a gerbe determine Brauer-Severi fibrations
(see, e.g., \cite{EHKV}).
This framework was essential in \cite{HKTconic}, in the
construction of families of conic bundles
for the application of the specialization method.
We will use the same strategy for families of
Brauer-Severi bundles of relative dimension $2$.

\subsection{Brauer groups}
\label{ss.brauer}
The Brauer group $\Br(K)$ of a field $K$ is a classical invariant,
defined as the group of similarity classes of central simple algebras over $K$.
It may also be described in terms of Galois cohomology, or in the language
of \'etale cohomology, as
\[ \Br(K)\cong H^2(\Spec(K),\G_m). \]
(All cohomology in this paper will be \'etale cohomology.)

Now let $S$ be a Noetherian scheme or Deligne-Mumford stack.
Similarity classes of sheaves of Azumaya algebras over $S$,
which naturally generalize central simple algebras over a field,
give one notion of Brauer group of $S$.
Another, more relevant for us, is the
\emph{cohomological Brauer group}
\[ \Br(S):=H^2(S,\G_m)_{\mathrm{tors}}. \]
The former is a subgroup of the latter; despite examples of a
pathological nature where they differ \cite{EHKV} they are known
to coincide in geometrically relevant cases, including all schemes that
possess an ample line bundle; see, e.g., \cite{lieblichperiodindex}
and references therein.

\begin{prop}[{\cite[Cor.\ II.1.8, II.2.2]{GB}}, {\cite[Prop.\ 2]{HKTconic}}]
\label{prop.Brbasic}
Let $S$ be a regular integral Noetherian scheme or Deligne-Mumford stack and
$S'\subset S$ a nonempty open subscheme, respectively, substack. Then:
\begin{itemize}
\item[(i)] The group $H^2(S,\G_m)$ is torsion, and the
restriction homomorphism $\Br(S)\to \Br(S')$ is injective.
\item[(ii)] If $\dim(S)\le 2$ then
every element of $\Br(S)$ is the class of a sheaf of Azumaya algebras
on $S$, and every sheaf of Azumaya algebras on $S'$ representing
an element $\alpha|_{S'}$ for some $\alpha\in \Br(S)$ extends to a
sheaf of Azumaya algebras on $S$.
\item[(iii)] If $\dim(S)=2$ and $\dim(S\smallsetminus S')=0$ then
every sheaf of Azumaya algebras on
$S'$ extends to a sheaf of Azumaya algebras on $S$, and the
restriction homomorphism $\Br(S)\to \Br(S')$ is an isomorphism.
\end{itemize}
\end{prop}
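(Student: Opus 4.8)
The plan is to deduce all three parts from one structural principle: for a regular integral $S$ with generic point $\eta$ and function field $K=k(S)$, restriction gives an injection $H^2(S,\G_m)\hookrightarrow H^2(\Spec(K),\G_m)=\Br(K)$. First I would establish this injectivity by means of the coniveau (local cohomology) exact sequence, in which regularity forces the kernel of restriction to $\eta$ to vanish; this is the classical injectivity of the Brauer group of a regular integral scheme into the Brauer group of its function field. Since $\Br(K)$ is torsion, $H^2(S,\G_m)$ is torsion, which is the first assertion of (i). For the second, $\eta$ lies in every nonempty open $S'$, so the injection $\Br(S)\hookrightarrow\Br(K)$ factors through $\Br(S')$; hence $\Br(S)\to\Br(S')$ is injective.

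For (ii) the extra ingredient is the theory of maximal orders on a regular base of dimension at most two. Given $\alpha\in\Br(S)$, I would represent it over $K$ by a central simple algebra $A$ and choose a maximal $\cO_S$-order $\mathcal{B}\subset A$. Because $\alpha$ is defined on all of $S$, it is unramified in codimension one, so the theorem of Auslander and Goldman --- a maximal order over a regular local ring of dimension at most two that is unramified in codimension one is Azumaya --- shows $\mathcal{B}$ is an Azumaya algebra on $S$ of class $\alpha$. For the extension assertion, given an Azumaya algebra $\mathcal{A}$ on $S'$ with $[\mathcal{A}]=\alpha|_{S'}$, Morita theory presents $\mathcal{A}\cong\End_{\mathcal{B}|_{S'}}(\mathcal{P})$ for a locally free $\mathcal{B}|_{S'}$-module $\mathcal{P}$; I would extend $\mathcal{P}$ to a coherent $\mathcal{B}$-module on $S$ and pass to its reflexive hull $\widetilde{\mathcal{P}}$. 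On a regular scheme of dimension at most two reflexive sheaves are locally free, so $\widetilde{\mathcal{P}}$ is a locally free $\mathcal{B}$-module and $\End_{\mathcal{B}}(\widetilde{\mathcal{P}})$ is an Azumaya algebra on $S$ restricting to $\mathcal{A}$.

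Part (iii) combines (ii) with purity. When $\dim(S)=2$ and $\dim(S\smallsetminus S')=0$ the removed locus $Z=S\smallsetminus S'$ has codimension two, so absolute cohomological purity (Gabber), fed into the local cohomology sequence for $\G_m$ together with the torsion statement from (i), makes $H^i_Z(S,\G_m)$ vanish for $i=2,3$ and yields an isomorphism $\Br(S)\eqto\Br(S')$. Hence every Azumaya algebra on $S'$ has class $\alpha|_{S'}$ for some $\alpha\in\Br(S)$, and the extension assertion of (ii) provides the desired Azumaya extension to $S$.

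For regular integral Deligne-Mumford stacks I would transport each argument \'etale-locally, the role of the generic point being played by its stacky analogue. Assertion (i) passes over directly. The arguments for (ii) and (iii) are the delicate part: the maximal-order construction, the Auslander-Goldman criterion, and absolute purity must be reinstalled in the stacky setting, most naturally by working on an \'etale atlas or, for an orbifold, equivariantly on a local presentation $[U/G]$ with $U$ a scheme and $G$ finite. I expect this transport to the stacky context --- rather than any one of the scheme-theoretic ingredients, which are all classical --- to be the main obstacle, and it is exactly what is accomplished in \cite[Prop.\ 2]{HKTconic}.
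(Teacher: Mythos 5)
Your proposal is correct and takes essentially the same route as the paper: the paper supplies no argument of its own, resting entirely on the citations to \cite[Cor.\ II.1.8, II.2.2]{GB} for schemes and \cite[Prop.\ 2]{HKTconic} for Deligne-Mumford stacks, and your sketch reconstructs precisely the classical arguments behind the former (injectivity of $\Br(S)\hookrightarrow \Br(k(S))$ from regularity, Auslander--Goldman maximal orders plus reflexive extension in dimension $\le 2$, and purity for (iii)). Your deferral of the stack case to \cite[Prop.\ 2]{HKTconic} is exactly what the paper does as well, so there is no divergence to report.
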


Now suppose $S$ is a smooth variety or orbifold over a field $k$, with function field 
$K=k(S)$. Then 
there is a description of the image of $\Br(S)\to\Br(K)$
in terms of residues.
In case $S$ is an orbifold,
the role of the residue field at a point is played by the
\emph{residual gerbe}
\[ \mathcal{G}_{\xi}\to \Spec(k(\xi)) \]
at a point $\xi$ of $S$ (cf.\ \cite[\S11]{LMB}, \cite[App.\ B]{rydhetaledevissage}).

\begin{prop}
\label{prop.Br}
Let $k$ be a field, $S$ a smooth variety or orbifold over $k$ with
function field $K$, and
$n$ a positive integer that is invertible in $k$.
Then there is a residue map from the $n$-torsion of
the Brauer group of $K$ to the direct sum
of $H^1$ groups of residual gerbes $\mathcal{G}_{\xi}$
at codimension $1$ points $\xi\in S^{(1)}$, which fits into an exact sequence
\[ 0\to \Br(S)[n]\to \Br(K)[n]\to
\bigoplus_{\xi\in S^{(1)}} H^1(\mathcal{G}_{\xi},\Z/n\Z). \]
\end{prop}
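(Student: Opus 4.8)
The plan is to reduce the assertion to the behaviour at a single codimension-$1$ point and to reassemble these local pictures by purity, with the residual gerbe $\mathcal{G}_\xi$ taking over the role played by the residue field in the classical residue theory for a smooth variety. Since $S$ is an orbifold its generic point lies in the open substack that is a scheme, so $K=k(S)$ is an honest field and $\Br(K)[n]=H^2(\Spec(K),\mu_n)$ by Kummer theory. The injectivity of $\Br(S)[n]\to\Br(K)[n]$ needs no real work: writing $\Br(K)=\varinjlim_U\Br(U)$ as a colimit over the dense open substacks $U\subseteq S$ (using that each $H^2(U,\G_m)$ is torsion by Proposition \ref{prop.Brbasic}(i)), a class dying in $\Br(K)$ already dies in some $\Br(U)$, whence it vanishes by the injectivity in Proposition \ref{prop.Brbasic}(i). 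The same proposition lets me identify $\Br(S)$ with $H^2(S,\G_m)$ throughout.

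First I would construct the residue maps. Localizing $S$ at a codimension-$1$ point $\xi$ produces a regular one-dimensional Deligne-Mumford stack $\mathcal{S}_\xi$ with generic point $\Spec(K)$ and a single closed point having residual gerbe $\mathcal{G}_\xi$. I take $\partial_\xi$ to be the boundary map of the localization sequence for $\Spec(K)\hookrightarrow\mathcal{S}_\xi$ with $\mu_n$-coefficients: cohomological purity at the regular codimension-$1$ closed substack identifies the cohomology with supports, and the resulting Gysin shift carries $H^2(\Spec(K),\mu_n)=\Br(K)[n]$ to $H^1(\mathcal{G}_\xi,\mu_n^{\otimes 0})=H^1(\mathcal{G}_\xi,\Z/n\Z)$; the Tate twist cancels to leave the untwisted coefficients of the statement. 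That all but finitely many residues of a given class vanish --- so that the residues land in the direct sum --- is immediate, since the class already extends to some $\Br(U)$ and $S\setminus U$ has finitely many codimension-$1$ points.

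The heart of the matter is exactness in the middle: a class whose residues all vanish lies in $\Br(S)[n]$. I would split this into a local statement and a local-to-global comparison. Locally, for each $\xi$ I claim
\[ 0\to\Br(\mathcal{S}_\xi)[n]\to\Br(K)[n]\xrightarrow{\partial_\xi}H^1(\mathcal{G}_\xi,\Z/n\Z), \]
so that $\ker\partial_\xi=\Br(\mathcal{S}_\xi)[n]$; globally I claim the intersection formula
\[ \Br(S)=\bigcap_{\xi\in S^{(1)}}\Br(\mathcal{S}_\xi)\subseteq\Br(K), \]
which says exactly that a Brauer class unramified in codimension $1$ extends across all of $S$. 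Intersecting with $n$-torsion and combining the two gives $\Br(S)[n]=\bigcap_\xi\ker\partial_\xi=\ker(\bigoplus_\xi\partial_\xi)$, as required. The intersection formula is purity for the Brauer group in codimension $\ge 2$: after Kummer it amounts to the vanishing, in the relevant degree, of local cohomology along closed substacks of codimension $\ge 2$, which I would draw from absolute cohomological purity applied to the smooth Deligne-Mumford stack $S$ (reducing to an \'etale atlas if necessary).

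The main obstacle is the local residue computation at a stacky codimension-$1$ point, together with the purity statements on the stack that support it. For a scheme point this is the classical exact sequence relating the Brauer group of a discrete valuation ring to that of its fraction field, with target $H^1(k(\xi),\Z/n\Z)$. When $\xi$ carries nontrivial stack structure the correct target is forced upward to $H^1(\mathcal{G}_\xi,\Z/n\Z)$, and to verify this I would present $\mathcal{S}_\xi$ \'etale-locally as a quotient $[\Spec(A)/\Gamma]$ with $A$ a regular local ring of dimension $1$ and $\Gamma$ the finite inertia --- for instance as a root stack along the closed point --- run the classical residue theory on $\Spec(A)$ upstairs, and then pass to $\Gamma$-invariants; the inertia contribution is precisely what promotes the residue field to its residual gerbe. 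Checking that purity and the localization sequence behave correctly in this stacky setting, rather than any individual computation, is where the genuine care is needed.
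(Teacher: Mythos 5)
Your proposal is correct and takes essentially the same route as the paper: the paper also combines cohomological purity with the (local-to-global) sequences for cohomology with supports on the stack, reduces to the classical Grothendieck-style argument — residues defined at codimension-one points via the Gysin boundary, exactness by extending classes unramified in codimension one across loci of codimension at least two — and cites \cite{GB} for exactly the local-plus-intersection structure you spell out. The only differences are cosmetic: the paper reduces to a perfect base field so as to use purity for smooth pairs (SGA4), where you invoke absolute purity, and it states the Gysin sequences for global smooth closed substacks rather than for the localized stacks $\mathcal{S}_\xi$; your closing suggestion to compute via $[\Spec(A)/\Gamma]$-presentations and $\Gamma$-invariants is the one loose step (descent contributes more than invariants), but it is redundant given your primary construction via purity on the stack itself.
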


\begin{proof}
We already know that $\Br(S)\to \Br(K)$ is injective.
Cohomological purity \cite[\S XVI.3]{SGA4t3}, combined with the local-to-global
spectral sequence for cohomology with supports \cite[\S VI.5]{milne}, yields exact sequences
\begin{align*}
\dots\to H^{j-1}&(S\smallsetminus T,\Z/n\Z(i))\to H^{j-2c}(T,\Z/n\Z(i-c))\\ &\to
H^j(S,\Z/n\Z(i))\to H^j(S\smallsetminus T,\Z/n\Z(i))\to\dots
\end{align*}
for all $i$
when $T\subset S$ is a closed substack of pure codimension $c$ that is also smooth over $k$.
We may assume, as in \cite[Rmk.\ 4.7]{blochogus}, that $k$ is a perfect field,
and as in \cite{GB} define the residue map and prove exactness.
\end{proof}

\begin{rema}
\label{rem.purity}
When $S$ is a smooth variety the exact sequence is well known, cf.\ \cite{ct-pure},
and may be extended with a further residue map as
a complex (Bloch-Ogus complex)
\[ \bigoplus_{\xi\in S^{(1)}} H^1(k(\xi),\Z/n\Z)
\to \bigoplus_{\xi\in S^{(2)}} H^0(k(\xi),\mu_n^{-1}). \]
The complex with this additional residue map is exact
when $k$ is algebraically closed and
$S$ is a smooth projective rational surface.
\end{rema}

\subsection{Root stacks}
\label{ss.rootstacks}
We recall and describe basic properties of root stacks.
Applied to a smooth variety and smooth divisor, the root stack construction produces an orbifold.
The main function of a root stack is to
remove ramification (e.g., of a Brauer class).
In geometry, we find several advantages:
\begin{itemize}
\item We are able to relate smooth families over a root stack to
flat families over the underlying scheme.
\item The classification of smooth families may be more rigid,
e.g., for conic bundles over a surface, the general ones correspond to
Brauer classes at the generic point, while the smooth ones correspond to
unramified Brauer classes.
\item Other applications such as those requiring deformation theory also
benefit from working with a proper Deligne-Mumford stack,
instead of a quasi-projective variety.
\end{itemize}

Let $S$ be a regular integral scheme,
$D\subset S$ an effective divisor, and $n$ a positive integer,
invertible in the local rings of $S$.
Then there is the corresponding \emph{root stack} $\sqrt[n]{(S,D)}$,
an integral locally Noetherian Deligne-Mumford stack,
regular if and only if $D$ is regular \cite[\S 2]{cadman}, \cite[App.\ B]{AGV}.
Above $D$ is the \emph{gerbe of the root stack} $\mathcal{G}_D$ \cite[Def.\ 2.4.4]{cadman},
an effective divisor on $\sqrt[n]{(S,D)}$ fitting into a commutative diagram
\begin{equation}
\begin{split}
\label{eqn.diagramrootstack}
\xymatrix{
\mathcal{G}_D \ar[r] \ar[d] & \sqrt[n]{(S,D)}\ar[d] \\
D \ar[r] & S
}
\end{split}
\end{equation}
where the right-hand morphism is flat and is ramified over $D$ (so the
diagram is not cartesian unless $D$ is empty) and restricts to an
isomorphism over $S\smallsetminus D$.
The left-hand morphism makes $\mathcal{G}_D$ into a
$\mu_n$-gerbe over $D$.

\begin{rema}
\label{rem.ramification}
In the situation of Proposition \ref{prop.Br} let $\xi$ be the generic point
of an irreducible regular divisor $D$.
The ramification of the right-hand map in \eqref{eqn.diagramrootstack}
leads to a factor of $n$ in the residue map
for the generic point of the gerbe of the root stack, as compared with
that of $\xi$; cf.\ \cite[Thm.\ 10.4]{saltmanlectures}.
So, as observed by Lieblich \cite[\S 3.2]{lieblicharithmeticsurface},
the residue map for the generic point of the gerbe of the root stack
vanishes on $\Br(K)[n]$.
\end{rema}

Now let $D'\subset S$ be another effective divisor.
The fiber product $\sqrt[n]{(S,D)}\times_S\sqrt[n]{(S,D')}$ of the two
associated root stacks is the \emph{iterated root stack} $\sqrt[n]{(S,\{D,D'\})}$
\cite[Def.\ 2.2.4]{cadman}.
If $D$ and $D'$ are regular and intersect transversally, then
$\sqrt[n]{(S,\{D,D'\})}$ is regular; the same is valid with any
number of divisors.
A generalization (not used in this paper) is the case of a divisor $D$ that
\'etale locally may
be written as a union of regular divisors meeting transversally,
for which a construction by Matsuki and Olsson \cite{matsukiolsson}
based on log structures gives rise to a regular Deligne-Mumford stack,
\'etale locally isomorphic to an iterated root stack.

Locally on $S$ the divisor
$D$ is defined by the vanishing of a regular function $f$, and
then $\sqrt[n]{(S,D)}$ is the quotient stack of $\Spec(\cO_S[t]/(t^n-f))$ by
the scalar action of $\mu_n$ on $t$.
The gerbe of the root stack is, locally, defined by the vanishing of $t$, and its points
are just the points of $D$, with $\mu_n$-stabilizer,
i.e., above any point $x\in D$ with residue field $\kappa(x)$ the
gerbe of the root stack takes the form of a classifying stack $BG$
for $G=\mu_{n,\kappa(x)}$.

Often we consider flat families of projective varieties, or schemes,
and we benefit by being able to study such families over a root stack,
i.e., flat projective morphisms $P\to \sqrt[n]{(S,D)}$ of
Deligne-Mumford stacks.
The fiber over a point $s\in D$, where the gerbe of the root stack
takes the form $BG$ for $G=\mu_{n,\kappa(s)}$ as mentioned above,
has the form $[P_s/G]$ for an action of $G$ on a projective scheme $P_s$
over $\kappa(s)$.
Typically $G$ acts with stabilizers ($\mu_n$ or subgroups of $\mu_n$),
and we will speak of the locus with $\mu_n$-stabilizer
for the appropriate closed subscheme of $P_s$.

\subsection{Coarse moduli spaces}
\label{ss.cms}
Let $X$ be a Deligne-Mumford stack, and assume that $X$ has
\emph{finite stabilizer}, meaning that projection to $X$ from
$X\times_{X\times X}X$ is a finite morphism.
Then there is a \emph{coarse moduli space}, an algebraic space $Q$
with separated morphism $X\to Q$, universal for morphisms to algebraic spaces.
For instance, $X$ could be $[\Spec(A)/G]$ for some finite group $G$ acting on
affine scheme $\Spec(A)$, and then $X$ has coarse moduli space $\Spec(A^G)$,
where $A^G$ denotes the $G$-invariant subring of $A$.
The article by Keel and Mori \cite{keelmori} gives the
classic treatment of coarse moduli space in the setting where
$X$ is locally of finite type over a locally Noetherian base scheme;
this suffices for our purposes, and in this case $Q$ has the
same property and $X\to Q$ is proper and quasi-finite.
The treatment without finiteness hypothesis is due to Rydh \cite{rydhquotients}.
(These treatments allow, more generally, $X$ to be
an Artin stack with finite stabilizer.)

The construction of coarse moduli space shows that $X\to Q$ has the
\'etale local (over $Q$) form $[\Spec(A)/G]\to \Spec(A^G)$, as
described above.
In fact, $G$ may be taken to be the geometric stabilizer group at a point
of $X$, and the assertion is valid on an \'etale neighborhood of the
corresponding point of $Q$; cf.\ \cite[Lem.\ 3.4]{alperkresch}.

For the next statement we assume that $X$ is \emph{tame},
meaning that besides the finite stabilizer hypothesis we also require the
order of the geometric stabilizer group at any point
$\Spec(\Omega)\to X$ not to be divisible by the
characteristic of $\Omega$ \cite{olssonstarr}.
(This notion is also available for
Artin stacks with finite stabilizer \cite{aovtame}.)
We suppose furthermore that $X$, and hence as well $Q$, is Noetherian.
Then by \cite[Thm.\ 10.3]{alpergood},
pullback by $X\to Q$ identifies the category of vector bundles on
$Q$ with the full subcategory of vector bundles on $X$ with trivial
actions of geometric stabilizer groups at closed points.

The most relevant example
of coarse moduli space for us is a root stack
$\sqrt[n]{(S,D)}\to S$ (or an iterated root stack), and this is tame.
The next result concerns schemes, \emph{Fano over $S$} for a
Noetherian scheme $S$, by which we mean the full subcategory of schemes
over $S$, in which an object is a flat projective morphism $Z\to S$ with
Gorenstein fibers such that $\omega^\vee_{Z/S}$ is relatively ample.
The last condition is equivalent by \cite[4.7.1]{EGAIII} to the
ampleness of $\omega^\vee_{Z_s}$ on the fiber $Z_s$ for every
closed point $s\in S$.
We may also take $S$ to be an algebraic space, in which case
$Z$ is also allowed to be an algebraic space.
We may allow $S$ to be a DM stack, in which case
$Z$ is also a DM stack.
But since stacks form a $2$-category, we need to take care to obtain
an ordinary category:
a morphism from $f\colon Z\to S$ to $f'\colon Z'\to S$ is defined to be an
equivalence class of pairs $(\varphi,\alpha)$ where
$\varphi\colon Z\to Z'$ is a morphism and
$\alpha\colon f\Rightarrow f'\circ\varphi$ is a $2$-morphism,
with $(\varphi,\alpha)\sim(\tilde\varphi,\tilde\alpha)$ when there
exists a (necessarily unique) $2$-morphism
$\beta\colon \varphi\Rightarrow\tilde\varphi$, such that
$f'(\beta(z))\circ\alpha(z)=\tilde\alpha(z)$ for every object $z$ of $Z$.

\begin{prop}
\label{prop.coarsemodulispace}
Let $X$ be a tame Noetherian DM stack with coarse moduli space $X\to Q$.
Then the category of schemes, Fano over $Q$, is equivalent, by base change,
to the full subcategory of DM stacks, Fano over $X$, with trivial
actions of geometric stabilizer groups at closed points.
\end{prop}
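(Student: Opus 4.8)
The plan is to exhibit an explicit quasi-inverse to the base-change functor using relative anticanonical algebras together with the descent of vector bundles provided by the theorem of Alper quoted above. Write $c\colon X\to Q$ for the coarse moduli morphism. First I would check that base change lands in the target category: for $Z\to Q$ Fano, the morphism $\mathcal{Z}:=Z\times_Q X\to X$ is again flat and projective with Gorenstein fibers and relatively ample $\omega^\vee$, since all of these are fiberwise conditions preserved under base change and the relative dualizing sheaf of a Gorenstein morphism commutes with base change. Triviality of the stabilizer actions holds because, \'etale-locally where $X=[\Spec(A)/G]$ and $Q=\Spec(A^G)$, the family $\mathcal{Z}$ is pulled back from $Q$ and hence is constant along each $G$-orbit, so the stabilizer of a point acts trivially on the corresponding fiber.

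For the quasi-inverse, let $f\colon\mathcal{Z}\to X$ be Fano with trivial stabilizer actions. Choose $N\gg 0$ so that $\mathcal{L}:=(\omega^\vee_{\mathcal{Z}/X})^{\otimes N}$ is relatively very ample with $R^{>0}f_*\mathcal{L}^{\otimes m}=0$ for all $m\ge 1$; then each $\mathcal{A}_m:=f_*\mathcal{L}^{\otimes m}$ is a vector bundle whose formation commutes with base change, and $\mathcal{A}:=\bigoplus_{m\ge 0}\mathcal{A}_m$ is a graded $\cO_X$-algebra with $\mathcal{Z}\cong\underline{\Proj}_X\mathcal{A}$. The key point is that each $\mathcal{A}_m$ carries \emph{trivial} actions of the geometric stabilizer groups at closed points: a stabilizer $G_x$ acts trivially on the fiber $\mathcal{Z}_x$ by hypothesis, hence trivially on the canonically associated sheaf $\mathcal{L}^{\otimes m}|_{\mathcal{Z}_x}$ and therefore on $H^0(\mathcal{Z}_x,\mathcal{L}^{\otimes m})=\mathcal{A}_m\otimes\kappa(x)$. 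By Alper's theorem, pushforward $c_*$ descends each $\mathcal{A}_m$ to a vector bundle $\mathcal{B}_m$ on $Q$ with $c^*\mathcal{B}_m\cong\mathcal{A}_m$, and full faithfulness of that equivalence descends the multiplication maps, giving a graded $\cO_Q$-algebra $\mathcal{B}$ with $c^*\mathcal{B}\cong\mathcal{A}$. I would then set $Z:=\underline{\Proj}_Q\mathcal{B}$.

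Next I would verify that $Z$ is Fano over $Q$ and that base change recovers $\mathcal{Z}$. Since $\underline{\Proj}$ commutes with base change and $c^*\mathcal{B}\cong\mathcal{A}$, we get $Z\times_Q X\cong\underline{\Proj}_X\mathcal{A}\cong\mathcal{Z}$, which also identifies the geometric fibers of $Z\to Q$ with those of $\mathcal{Z}\to X$; hence these fibers are Fano, being Gorenstein with ample anticanonical sheaf. Flatness of $Z\to Q$ does not descend from that of $\mathcal{Z}\to X$ (the morphism $c$ is not flat), so instead I would argue that $Z\hookrightarrow\PP(\mathcal{B}_1)$ has fibers of locally constant Hilbert polynomial—the relevant dimensions being the locally constant ranks of the $\mathcal{B}_m$, computed via cohomology and base change for the flat family $\mathcal{Z}\to X$—and conclude flatness by the usual criterion. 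For the remaining round trip, the base change of a Fano scheme $Z\to Q$ has anticanonical algebra equal to $c^*$ of that of $Z$, which Alper-descends back to the anticanonical algebra of $Z$; this produces a natural isomorphism $Z\eqto\underline{\Proj}_Q\mathcal{B}$, and functoriality of the constructions makes the two functors quasi-inverse, yielding in particular full faithfulness.

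The main obstacle is the heart of the quasi-inverse construction: promoting the hypothesis of \emph{trivial stabilizer action on fibers} to \emph{trivial stabilizer action on the anticanonical pushforwards} $\mathcal{A}_m$, so that Alper's descent of vector bundles applies and yields $c^*\mathcal{B}\cong\mathcal{A}$. This step is exactly where tameness and the canonical, choice-free nature of $\omega^\vee_{\mathcal{Z}/X}$ are essential; once it is in place, the identity $Z\times_Q X\cong\mathcal{Z}$ follows formally from the compatibility of $\underline{\Proj}$ with base change. The flatness verification and the bookkeeping for full faithfulness are then comparatively routine.
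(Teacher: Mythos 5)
Your core construction---pushing forward powers of $\omega^\vee_{\mathcal{Z}/X}$, noting that triviality of the stabilizer actions on fibers forces triviality on these pushforwards because the linearization of the relative dualizing sheaf is canonical, descending through Alper's theorem, and applying relative $\Proj$---is exactly the paper's proof of essential surjectivity. However, two steps of your write-up have genuine gaps. The first concerns full faithfulness. You derive it from ``functoriality of the constructions,'' but your descent construction is not a functor on the categories in question: these are full subcategories, so they contain all $Q$-morphisms (resp.\ $X$-morphisms), and a general morphism $g\colon Z\to Z'$ of Fano schemes over $Q$ induces no map of anticanonical algebras---there is no natural comparison between $g^*\omega^\vee_{Z'/Q}$ and $\omega^\vee_{Z/Q}$---so your ``quasi-inverse'' is defined only on objects and isomorphisms. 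Object-by-object round-trip isomorphisms do not produce an equivalence. What is needed, and what the paper uses, is the universal property of the coarse moduli space: since $X$ is tame, formation of the coarse space commutes with arbitrary base change, so $Z\times_QX\to Z$ is itself a coarse moduli space map; hence any $X$-morphism $Z\times_QX\to Z'\times_QX$, composed with the projection to $Z'$ (a scheme, so the universal property applies), factors uniquely through $Z$, and this inverts base change on $\Hom$-sets. This argument is absent from your proposal and cannot be replaced by an appeal to functoriality of the algebra construction.

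The second gap is your flatness argument for $Z=\underline{\Proj}_Q\mathcal{B}\to Q$. The criterion ``locally constant fiberwise Hilbert polynomial implies flat'' is valid only over a reduced base, whereas $Q$ here is an arbitrary Noetherian algebraic space: for instance, over $\Spec(k[\epsilon]/(\epsilon^2))$ the closed subscheme $\Proj\bigl(k[\epsilon][x,y]/(\epsilon x)\bigr)$ of $\PP^1$ has constant fiber Hilbert polynomial but is not flat. Fortunately your own setup yields a correct and shorter argument: every graded piece $\mathcal{B}_m$ is locally free, hence flat, so $\mathcal{B}$ and each localization $\mathcal{B}_f$ are flat $\cO_Q$-modules, and each affine chart $\Spec(\mathcal{B}_{(f)})$ of $\underline{\Proj}_Q\mathcal{B}$ is flat over $Q$ because the degree-zero part $\mathcal{B}_{(f)}$ is an $\cO_Q$-module direct summand of $\mathcal{B}_f$. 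With that repair, Gorenstein-ness of fibers and relative ampleness of $\omega^\vee_{Z/Q}$ follow, as you say, from the identification of geometric fibers of $Z\to Q$ with those of $\mathcal{Z}\to X$.
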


\begin{proof}
The universal property of the coarse moduli space tells us that
base change by $X\to Q$ is a fully faithful functor.

To an object $f\colon Z\to X$ of the category of DM stacks, Fano over $X$,
there is an associated quasi-coherent sheaf of
graded $\cO_{X}$-algebras
$\bigoplus_{n\ge 0}f_*((\omega^\vee_{Z/X})^n)$.
The graded components are coherent and for sufficiently large $n$ are
locally free with formation commuting with base change to
(geometric) fibers.
If the actions of geometric stabilizer groups at closed points is trivial,
then $f_*((\omega^\vee_{Z/X})^n)$ for $n\gg 0$ satisfies the condition
stated above to descend to a locally free coherent sheaf on $Q$.
The algebra structure descends as well, and by applying $\Proj$ we obtain
an algebraic space $Y$, Fano over $Q$, which upon base change to $X$
recovers $Z\to X$ up to an isomorphism.
\end{proof}

\begin{rema}
\label{rem.cms}
As a consequence, the algebraic space $Y$, Fano over $Q$ that we obtain in
the proof of Proposition \ref{prop.coarsemodulispace}, is the
coarse moduli space of $Z$.
\end{rema}

\subsection{\'Etale local uniqueness in smooth families}
\label{ss.henselian}
In certain geometric situations such as fibrations in projective spaces,
a smooth family has a unique \'etale local isomorphism type.
The same holds $G$-equivariantly, when $G$ is a finite group
whose order is not divisible by the residue characteristic.

\begin{lemm}
\label{lem.henselianfinitegroup}
Let $A$ be a Henselian ring with residue field $\kappa$,
and $X$ a scheme, smooth over $\Spec(A)$.
Let $G$ be a finite group, whose order is invertible in $\kappa$, with
compatible actions on $X$ and on $A$ and trivial action on $\kappa$,
and $x$ a $\kappa$-point of the fiber $X_\kappa$ over the closed point of $\Spec(A)$ that is
fixed for the group action.
Then $X$ admits a $G$-invariant $A$-point that specializes to $x$.
\end{lemm}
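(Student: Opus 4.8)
The plan is to reduce, by choosing $G$-equivariant local coordinates, to the case of a \emph{linear} action on affine space, where the sought point is simply the origin, and then to lift that origin through an \'etale map using that $A$ is Henselian. The invertibility of $|G|$ will enter when linearizing the action near $x$.

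First I would work in a $G$-invariant open neighborhood of $x$; since the question is local around the fixed point $x$, passing to $\cO_{X,x}$ and spreading out afterwards, there is no loss in assuming $X$ affine and $G$-stable. Then $\cO_{X,x}$ carries a $G$-action compatible with the semilinear action on $A$, fixing the maximal ideal $\mm_x$ and the surjection $\cO_{X,x}\to\kappa$ (which is $G$-equivariant because $G$ acts trivially on $\kappa$). Let $V=\mm_x/(\mm_x^2+\mm_A\cO_{X,x})$ be the fiber cotangent space at $x$; by smoothness it is an $n$-dimensional $\kappa$-vector space with $n=\dim X_\kappa$, carrying a $\kappa$-linear $G$-action. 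The aim of this first stage is to produce a $G$-equivariant $A$-morphism from a $G$-invariant neighborhood of $x$ to an affine space $\Spec(\Sym_A F)$ on which $G$ acts linearly, taking $x$ to the $G$-fixed origin and \'etale at $x$.

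The main step is to lift the representation $V$ from $\kappa$ to $A$. I would work with the skew group algebra $\Lambda=A\rtimes G$, which is finite over $A$ and satisfies $\Lambda\otimes_A\kappa\cong\kappa[G]$ because $G$ acts trivially on $\kappa$. Since $|G|$ is invertible in $\kappa$, Maschke's theorem makes $\kappa[G]$ semisimple; as $A$ is Henselian local, idempotents lift from $\kappa[G]=\Lambda/\mm_A\Lambda$ to $\Lambda$. Writing $V$ as a direct summand of a free $\kappa[G]$-module and lifting the corresponding idempotent, I obtain a $\Lambda$-module $F$, projective over $\Lambda$ and free over $A$, with $F\otimes_A\kappa\cong V$ as $\kappa[G]$-modules, that is, a semilinear lift of $V$. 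Because $F$ is projective over $\Lambda$ and $\mm_x\to V$ is a surjection of $\Lambda$-modules, the reduction $F\to F\otimes_A\kappa=V$ lifts to a $\Lambda$-linear (hence $A$-linear and $G$-equivariant) map $\tilde s\colon F\to\mm_x$. By Nakayama the images under $\tilde s$ of an $A$-basis of $F$ define a morphism $\phi$ to $\Spec(\Sym_A F)$ that is \'etale at $x$; it is $G$-equivariant by construction and sends $x$ to the origin $0$, the $G$-fixed $A$-point cut out by the augmentation $\Sym_A F\to A$.

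Finally I would conclude using that $A$ is Henselian. Shrinking to a $G$-invariant open on which $\phi$ is \'etale, consider the origin section $o\colon\Spec A\to\Spec(\Sym_A F)$, an honest $G$-invariant $A$-point. Its pullback along $\phi$ is an \'etale $\Spec A$-scheme carrying the point $x$ over the closed point; since $A$ is Henselian local and $\kappa(x)=\kappa$, there is a unique section through $x$, giving a unique lift $\tilde o\colon\Spec A\to X$ of $o$ that specializes to $x$. For each $g\in G$ the translate $g\cdot\tilde o$ is again a lift of $o$ (as $\phi$ and $o$ are equivariant) specializing to $x$ (as $x$ is fixed), so uniqueness forces $g\cdot\tilde o=\tilde o$; thus $\tilde o$ is the desired $G$-invariant $A$-point. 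I expect the representation-lifting step to be the main obstacle: one must pass to the skew group algebra to accommodate the nontrivial $G$-action on $A$, and it is exactly there that the invertibility of $|G|$ is essential, through the semisimplicity of $\kappa[G]$ and the lifting of idempotents over the Henselian ring $A$.
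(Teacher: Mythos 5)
Your overall architecture coincides with the paper's: reduce to a $G$-stable affine neighborhood, lift the fiber cotangent representation at $x$ equivariantly over the Henselian base, use the lifted basis to produce a $G$-equivariant \'etale situation at $x$, and finish by uniqueness of sections of an \'etale scheme over a Henselian local ring, with uniqueness forcing $G$-invariance. Those last steps of yours are all correct, and your morphism $\phi$ to $\Spec(\Sym_A F)$ with the origin section pulled back is just a reformulation of the paper's $G$-invariant subscheme $W\subset X$, \'etale over $\Spec(A)$ at $x$ (your $W$ is $\phi^{-1}$ of the origin).

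The genuine problem is the step you yourself single out as the crux: ``as $A$ is Henselian local, idempotents lift from $\kappa[G]=\Lambda/\mathfrak{m}_A\Lambda$ to $\Lambda$.'' The standard lifting-of-idempotents results for module-finite algebras over Henselian local rings require the Henselian ring to land in the \emph{center} of the algebra: their proofs adjoin a lift $b$ of the idempotent and work in the commutative finite subalgebra $A[b]$, which requires $b$ to commute with $A$. In $\Lambda=A\rtimes G$ one has $ga=g(a)g$, so $A$ is not central as soon as $G$ acts nontrivially on $A$ --- which is exactly the situation in all of the paper's applications (e.g.\ $\mu_3$ acting on $A'^{sh}$). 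Nor is $\Lambda$ known to be module-finite over its center: that would require $A$ to be finite over $A^G$, which is not among the hypotheses. So the lifting claim, as justified, does not follow from anything citable. It is nevertheless true, and the clean route to it is precisely the paper's device: $A^G$ is again Henselian local, with residue field $\kappa$ (surjectivity of $A^G\to\kappa$ uses the Reynolds operator $a\mapsto \frac{1}{|G|}\sum_{g}g(a)$, hence invertibility of $|G|$ and triviality of the action on $\kappa$), and $A^G$ \emph{is} central in $\Lambda$. The ordinary group ring $A^G[G]\subset\Lambda$ is free of rank $|G|$ over $A^G$ with $A^G[G]\otimes_{A^G}\kappa\cong\kappa[G]$, so the standard result applies to $M_m(A^G[G])$ and produces a projective $A^G[G]$-module $P$, free over $A^G$, lifting $V$; then $F:=A\otimes_{A^G}P$ with the diagonal action is your semilinear lift, and it is $\Lambda$-projective because $A\otimes_{A^G}A^G[G]\cong\Lambda$. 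With this repair your argument goes through --- but it is then, in substance, the paper's proof, which works with a free $A^G$-module with $G$-action from the outset and so never meets the noncentrality issue.
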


\begin{proof}
Since $A$ and hence as well $A^G$ is Henselian, there
is a free finite-rank $A^G$-module with $G$-action,
whose base change to $\kappa$ is
isomorphic to any given finite-dimensional $\kappa$-vector space with
$G$-action.

Replacing, as needed,
$X$ by intersections of translates under $G$
of an affine neighborhood of $x$,
we may suppose that $X=\Spec(B)$ is affine.
Let $\mathfrak{m}$ denote the maximal ideal corresponding to $x$.
The $A$-linear map
\[ \mathfrak{m}\to \Omega^1_{{X_\kappa},x} \]
(sending $f$ to $df$) is surjective and $G$-equivariant.
There is therefore a lift of an appropriate $\kappa$-basis of
$\Omega^1_{{X_\kappa},x}$ to elements of $\mathfrak{m}$ whose
$A^G$-linear span is invariant under the $G$-action.
According to \cite[17.16.3(i)]{EGAIV} (really, its proof), the lifts
define a $G$-invariant subscheme $W\subset X$ that is \'etale
over $\Spec(A)$ at $x$.
Since $A$ is Henselian, there is a unique $G$-invariant $A$-point of $W$ that
specializes to $x$.
\end{proof}

\begin{lemm}
\label{lem.liftisohenselian}
Let $A$ be a Henselian ring with residue field $\kappa$, and $X$ and $Y$
schemes, smooth and projective over $\Spec(A)$.
Let $G$ be a finite group, whose order is invertible in $\kappa$, with
compatible actions on $X$, on $Y$, and on $A$, with trivial action on $\kappa$.
Let $\varphi\colon X_\kappa\to Y_\kappa$ be a $G$-equivariant isomorphism of
fibers.
If the tangent bundle $T_{X_\kappa}$ satisfies
\[ H^1(X_\kappa,T_{X_\kappa})=0, \]
then there exists a $G$-equivariant isomorphism $X\to Y$
over $\Spec(A)$ that specializes to $\varphi$.
\end{lemm}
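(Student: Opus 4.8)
The plan is to realize a $G$-equivariant isomorphism $X\to Y$ over $\Spec(A)$ as a $G$-invariant $A$-point of a relative isomorphism scheme, and then to produce such a point by appealing to Lemma \ref{lem.henselianfinitegroup}. Since $X$ and $Y$ are projective over $\Spec(A)$, the functor assigning to an $A$-scheme $T$ the set of $T$-isomorphisms $X\times_{\Spec(A)}T\to Y\times_{\Spec(A)}T$ is representable by a scheme $I=\Isom_{\Spec(A)}(X,Y)$, locally of finite type over $A$: it is the open subscheme of the Hilbert scheme of $X\times_{\Spec(A)}Y$ parametrizing graphs of isomorphisms. The given $\varphi$ is then a $\kappa$-point of the closed fiber $I_\kappa$.

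First I would equip $I$ with a $G$-action lifting the given action on $\Spec(A)$. Writing $g_X$ and $g_Y$ for the action of $g\in G$ on $X$ and $Y$, each covering the automorphism of $\Spec(A)$ induced by $g$, the rule $\psi\mapsto g_Y\circ\psi\circ g_X^{-1}$ carries isomorphisms to isomorphisms and, extended functorially, defines such an action. By construction, a $G$-invariant $A$-point of $I$ is exactly a $G$-equivariant isomorphism over $\Spec(A)$; and the hypothesis that $\varphi$ is $G$-equivariant, i.e.\ $g_Y\circ\varphi=\varphi\circ g_X$ on $X_\kappa$, says precisely that $\varphi$ is a $G$-fixed point of $I_\kappa$. (It may be cleanest to organize this by regarding $X$ and $Y$ as families over the quotient stack $[\Spec(A)/G]$ and forming $\Isom$ there.)

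The crux is to show that $I\to\Spec(A)$ is smooth at $\varphi$, and this is exactly where the hypothesis $H^1(X_\kappa,T_{X_\kappa})=0$ is used. I would invoke the deformation theory of morphisms between fixed flat families: for a square-zero extension $A'\twoheadrightarrow A''$ with kernel $M$ and an isomorphism $\varphi''$ over $A''$, the obstruction to lifting $\varphi''$ to a morphism over $A'$ lies in $H^1(X_\kappa,\varphi_\kappa^*T_{Y_\kappa})\otimes M$, and the lifts (when nonempty) form a torsor under $H^0(X_\kappa,\varphi_\kappa^*T_{Y_\kappa})\otimes M$. Because $\varphi$ is an isomorphism, $\varphi_\kappa^*T_{Y_\kappa}\cong T_{X_\kappa}$, so the obstruction vanishes and $\varphi''$ always lifts; a lift that is an isomorphism modulo $M$ is automatically an isomorphism, since over an Artinian base the open locus where a morphism of proper flat families is an isomorphism, being nonempty, is everything. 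Thus $I$ satisfies the infinitesimal lifting criterion at $\varphi$, and being locally of finite type it is smooth over $A$ on a neighborhood of $\varphi$; as the smooth locus is $G$-invariant and contains the fixed point $\varphi$, it contains a $G$-invariant open $U\ni\varphi$.

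It then suffices to apply Lemma \ref{lem.henselianfinitegroup} to $U$, smooth over the Henselian ring $A$, with its compatible $G$-action and the $G$-fixed $\kappa$-point $\varphi$: this yields a $G$-invariant $A$-point of $U$ specializing to $\varphi$, which is the desired $G$-equivariant isomorphism $X\to Y$ over $\Spec(A)$ specializing to $\varphi$. I expect the main difficulty to be the deformation-theoretic identification of the obstruction with a class in $H^1(X_\kappa,T_{X_\kappa})$ --- keeping track that it is the isomorphism (equivalently its graph) that is being deformed while $X$ and $Y$ remain fixed --- together with the care needed to make the $G$-action on $I$ compatible with the action on $A$ in exactly the sense demanded by Lemma \ref{lem.henselianfinitegroup}.
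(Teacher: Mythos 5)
Your proposal is correct and follows essentially the same route as the paper: the paper's proof also forms the scheme $\Isom_A(X,Y)$ with its $G$-action, notes it is smooth over $\Spec(A)$ at $\varphi$ (citing Grothendieck's FGA expos\'e, where your deformation-theoretic argument with $H^1(X_\kappa,\varphi_\kappa^*T_{Y_\kappa})\cong H^1(X_\kappa,T_{X_\kappa})=0$ is carried out), and then concludes by Lemma \ref{lem.henselianfinitegroup}. You have merely unwound the cited representability and smoothness results by hand, which is fine.
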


\begin{proof}
There is a scheme $\Isom_A(X,Y)$, parametrizing isomorphisms of fibers
\cite[\S 4c]{TDTEIV}, with $G$-action,
smooth over $\Spec(A)$ at $\varphi$ by \cite[Cor.\ 5.4]{TDTEIV}.
We conclude by Lemma \ref{lem.henselianfinitegroup}.
\end{proof}

\subsection{Components in families}
The irreducible components of fibers in a flat family with
geometrically reduced fibers may be accessed by
excluding the relative singular locus and forming the sheaf of connected
components.
If $\pi\colon X\to S$ is a finitely presented smooth morphism of schemes, then
geometric components of fibers determine an \'etale equivalence relation on $X$ over $S$
whose algebraic space quotient
$\pi_0(X/S)$ is \'etale over $S$ and is characterized as
factorizing $\pi$,
\[ X\to \pi_0(X/S)\to S, \]
such that $X\to \pi_0(X/S)$ is surjective with geometrically connected fibers
\cite[6.8]{LMB}.
There are easy examples for which
$\pi_0(X/S)$ is not a scheme, e.g.,
the sum of squares of coordinates
$\A^2_\R\smallsetminus\{(0,0)\}\to \A^1_\R$.
Formation of $\pi_0(X/S)$ commutes with base change by an arbitrary
morphism $S'\to S$.

A subset of $X$, which is the pre-image of a subset of $\pi_0(X/S)$,
will be called \emph{component-saturated}.

\begin{lemm}
\label{lem.pi0}
Let $\pi\colon X\to S$ be a proper flat finitely presented pure-dimensional
morphism of schemes, such that for every $s\in S$ the smooth locus of the fiber $X_s$
is dense in $X_s$.
Then the morphism $\pi_0(X^{\mathrm{sm}}/S)\to S$ is universally closed,
where $X^{\mathrm{sm}}$ denotes the relative smooth locus of $\pi$.
If, moreover, the geometric fibers of $\pi$ all have $d$ irreducible components
for some integer $d$, then $\pi_0(X^{\mathrm{sm}}/S)\to S$ is finite \'etale
of degree $d$.
\end{lemm}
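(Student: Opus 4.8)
The plan is to analyze the family $\pi_0(X^{\mathrm{sm}}/S)\to S$ by reducing to the already-established structure theory. First I would invoke the preceding discussion: since $\pi$ is smooth on $X^{\mathrm{sm}}$, finitely presented, the geometric components of fibers of $X^{\mathrm{sm}}\to S$ determine an \'etale equivalence relation, and the quotient $\pi_0(X^{\mathrm{sm}}/S)$ is an algebraic space, \'etale over $S$, characterized by the factorization $X^{\mathrm{sm}}\to\pi_0(X^{\mathrm{sm}}/S)\to S$ with the second map \'etale and the first surjective with geometrically connected fibers, with formation commuting with arbitrary base change. The hypothesis that the smooth locus $X_s^{\mathrm{sm}}$ is dense in each $X_s$ guarantees that the components of $X_s^{\mathrm{sm}}$ are in bijection with the irreducible components of $X_s$, so that fiberwise $\pi_0(X^{\mathrm{sm}}/S)\to S$ records exactly the set of components of $X_s$.

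For universal closedness, since formation of $\pi_0$ commutes with base change, it suffices to show the map is closed after any base change, and in fact it suffices to verify the valuative criterion; equivalently, I would show the map is proper once combined with its being separated and of finite type. The key point is that although $X^{\mathrm{sm}}$ itself is not proper over $S$ (we have removed the relative singular locus), the ambient $\pi\colon X\to S$ \emph{is} proper. Given a valuation ring $R$ with fraction field $F$ and a map $\Spec R\to S$ together with a lift $\Spec F\to \pi_0(X^{\mathrm{sm}}/S)$, this lift corresponds to a choice of geometric component of the generic fiber; I would use properness of $X\to S$ to take the closure of that component inside the total space $X\times_S\Spec R$, obtaining a closed subscheme surjecting onto $\Spec R$, and then argue that its restriction over the closed point meets $X^{\mathrm{sm}}$ by the density hypothesis. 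This produces the required extension of the section to $\Spec R$, giving the valuative criterion and hence universal closedness.

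For the final assertion, suppose all geometric fibers have exactly $d$ irreducible components. Then $\pi_0(X^{\mathrm{sm}}/S)\to S$ is \'etale with all geometric fibers of cardinality $d$, hence quasi-finite; combined with the universal closedness just proved, it is proper and quasi-finite, therefore finite. A finite \'etale morphism of constant fiber degree $d$ is finite \'etale of degree $d$, which is the claim. Incidentally, finiteness forces $\pi_0(X^{\mathrm{sm}}/S)$ to be a scheme (a finite algebraic space over a scheme is a scheme), resolving the earlier caveat that $\pi_0$ need only be an algebraic space.

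I expect the main obstacle to be the valuative-criterion step for universal closedness: the subtlety is that $X^{\mathrm{sm}}$ is an open, non-proper subscheme, so closedness of $\pi_0(X^{\mathrm{sm}}/S)\to S$ is \emph{not} automatic and genuinely uses both the properness of the ambient $X\to S$ and the fiberwise density of the smooth locus. Making precise that the limiting component over the closed point does not degenerate into the discarded singular locus — i.e., that the closure of a component still meets $X^{\mathrm{sm}}$ fiberwise — is where the density hypothesis does its essential work, and care is needed because irreducible components of fibers can in principle merge in the limit; the density assumption together with constancy of the number of components in the refined statement is exactly what rules this out.
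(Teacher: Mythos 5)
Your outline reproduces the paper's strategy in broad strokes (reduce to a local/valuation-ring base, take the closure $Z$, inside the proper total space, of the component singled out over the generic point, and argue that $Z$ meets $X^{\mathrm{sm}}$ over the closed point), but the step you yourself flag as the main obstacle is genuinely missing, and density alone cannot supply it. The fiber $Z_{s_0}$ of the closure over the closed point is merely a nonempty closed subset of $X_{s_0}$, and a dense open subset of $X_{s_0}$ can be disjoint from a given nonempty closed subset --- the singular locus $X_{s_0}\smallsetminus X_{s_0}^{\mathrm{sm}}$ is itself such a closed subset, and nothing in your argument prevents $Z_{s_0}$ from being contained in it. What rules this out is the hypothesis that $\pi$ is \emph{pure-dimensional}, which you never invoke: by semicontinuity of fiber dimension \cite[13.1.5]{EGAIV} applied to $Z$, every point of $Z_{s_0}$ has local fiber dimension at least $n$, the common fiber dimension; since $X_{s_0}$ is pure of dimension $n$, the closed set $Z_{s_0}$ must then contain an irreducible component, i.e.\ a \emph{maximal point}, of $X_{s_0}$, and it is only maximal points that the density hypothesis guarantees to lie in $X^{\mathrm{sm}}$ (a dense open contains every maximal point). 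This dimension count is precisely the ``claim'' around which the paper's proof is built; without it your valuative-criterion argument does not close. (Separately, since $\pi_0(X^{\mathrm{sm}}/S)$ is only an algebraic space and is not separated over $S$, the valuative criterion you invoke must be the existence-part criterion for quasi-compact morphisms, allowing extensions of the valuation ring; your parenthetical ``proper once combined with its being separated'' is not available.)

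The finiteness step has a second, independent gap. From universal closedness and quasi-finiteness you conclude ``proper and quasi-finite, therefore finite,'' but properness includes separatedness, which is exactly what is not known --- indeed $\pi_0(X^{\mathrm{sm}}/S)\to S$ is typically non-separated: for a conic bundle over $\A^1$ degenerating over the origin to two lines, $\pi_0(X^{\mathrm{sm}}/S)$ is the affine line with doubled origin, which is \'etale, quasi-compact, quasi-finite and universally closed over $\A^1$, yet not finite. So ``universally closed $+$ quasi-finite $\Rightarrow$ finite'' is false; the constancy of the fiber degree $d$ must enter the proof of separatedness/finiteness, whereas you use it only to deduce quasi-finiteness (which holds regardless). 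The paper's argument is exactly designed to exploit constancy: the relative diagonal is an open immersion (\'etaleness), and to show it is closed one reduces to a strictly henselian local base, lifts the $d$ points of the closed fiber to $d$ sections, and uses closedness to see that the complement of the union of sections has closed image missing the closed point, hence is empty; thus $\pi_0(X^{\mathrm{sm}}/S)$ is a disjoint union of $d$ copies of the base, hence finite \'etale of degree $d$. In the doubled-origin example this breaks precisely because the fiber degree jumps, which is why your argument, as written, would ``prove'' a false statement.
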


\begin{proof}
The second assertion follows from the first by the general fact that
a finitely presented universally closed \'etale morphism $T\to S$ with fibers
all of the same degree is finite.
This comes down to topological properties of the relative diagonal
(it is an open immersion
since $T\to S$ is \'etale, and we need to show that it is closed as well),
so the verification is reduced by \cite[1.10.1]{EGAIV} to the case
that $S$ is $\Spec$ of a local ring, and by \cite[2.6.2]{EGAIV}, to the
case of a strictly Henselian ring.
Each of the $d$ pre-images of the closed point then lifts to a section.
The sections cover $T$, since the image of their complement is closed
but does not contain the closed point of $S$.
So, $T$ is the disjoint union of the sections.

Now suppose that $S=\Spec(A)$, where $A$ is a local ring.
Let $z\in X$ be a maximal point of a fiber of $\pi$ and
$Z$ its closure in $X$.
We claim that $\pi(Z\cap X^{\mathrm{sm}})$ contains the closed point of $S$.
Indeed, by semicontinuity of fiber dimension \cite[13.1.5]{EGAIV}, $Z$
contains a maximal point of the fiber over the closed point,
which by hypothesis lies in $X^{\mathrm{sm}}$.

Let $n\ge 0$.
In a proper flat family of schemes of relative dimension $n$ over a
Noetherian base scheme,
the condition on fibers to have singular locus of dimension less than $n$ is open
by semicontinuity of fiber dimension.
With the local form of semicontinuity of fiber dimension \cite[13.1.3]{EGAIV}
(semicontinuity of $w\mapsto \min \dim(\pi^{-1}(\pi(w))\cap U)$
where the minimum is over open $U$ containing $w$)
and flatness (particularly, \cite[2.3.4]{EGAIV}), we see
that the condition to have pure dimension $n$ is closed.
We reduce the verification of the first assertion immediately to the case when
$S$ is affine and reduced, and with the preceding observations and
\cite[8.10.5, 11.2.7]{EGAIV} to the case when $S$ is Noetherian.
Then, as noted in the proof of \cite[Prop.\ 2.2]{kreschflattening},
the claim implies the assertion.
(In fact, the case $A$ is a DVR suffices.)
\end{proof}

\begin{rema}
\label{rem.pi0}
With the notation of Lemma \ref{lem.pi0}, any component-saturated
closed subset $W\subset X^{\mathrm{sm}}$ has the property that
formation of the closure $\overline{W}$ in $X$ commutes with base change by an
arbitrary morphism $S'\to S$.
Since an equality of sets may be verified fiberwise,
the assertion reduces to the case $S'=\Spec(\kappa(s))$,
where $\kappa(s)$ is the residue field of a point $s\in S$.
We may suppose $\pi$ has constant fiber dimension $n$.
For $x\in \overline{W}\cap \pi^{-1}(s)$,
semicontinuity of fiber dimension yields
$\dim(\pi^{-1}(s)\cap U)=n$ for all open $U\subset \overline{W}$
containing $x$.
Hence there is a maximal point of $\pi^{-1}(s)$ that lies in $W$
(equivalently, lies in $\overline{W}$) and specializes to $x$.
\end{rema}

\section{Conic bundles}
\label{sec:conicbundles}
In this section we present an ingredient from the
Sarkisov program, which gives a description of conic bundles with
regular discriminant divisor, using the
language of root stacks.

By a conic bundle over a locally Noetherian scheme $S$, in which $2$ is invertible
in the local rings,
we mean a flat projective morphism
$\pi\colon X\to S$ such that the fiber over every point of $S$ is isomorphic
to a possibly singular, possibly non-reduced conic in $\PP^2$, i.e.,
the zero-loci of a nontrivial ternary quadratic form.
We say that a conic bundle $\pi\colon X\to S$ has
\emph{mild degeneration} if all fibers are reduced.

\begin{prop}
\label{prop.coniccontraction}
Let $S$ be a regular scheme, such that $2$ is invertible in the local rings of $S$,
and let $D\subset S$ be a regular divisor.
Then the operations described below identify, up to unique isomorphism:
\begin{itemize}
\item
smooth $\PP^1$-fibrations over $\sqrt{(S,D)}$ having nontrivial $\mu_2$-actions on fibers over the
gerbe of the root stack, with
\item
conic bundles $\pi\colon X\to S$ such that $X$ is regular and the
fiber $\pi^{-1}(s)$ over a point $s\in S$ is singular if and only if $s\in D$;
all such conic bundles have mild degeneration.
\end{itemize}
Given a smooth $\PP^1$-fibration $P\to \sqrt{(S,D)}$ with
nontrivial $\mu_2$-actions on fibers over the gerbe of the root stack,
a conic bundle $\pi\colon X\to S$ with mild degeneration is obtained by
\begin{itemize}
\item 
blowing up the locus with $\mu_2$-stabilizer,
\item 
collapsing the ``middle components'' of fibers over $D$, and 
\item 
descending to $S$.
\end{itemize}
Given a conic bundle $\pi\colon X\to S$
we obtain the associated smooth $\PP^1$-fibration over $\sqrt{(S,D)}$ (up to
unique isomorphism) by
\begin{itemize}
\item
pulling back to $\sqrt{(S,D)}$,
\item 
blowing up the relative singular locus over the gerbe of the root stack, and
\item 
collapsing the ``end components'' of fibers over $D$.
\end{itemize}
\end{prop}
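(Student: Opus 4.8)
The plan is to reduce the whole correspondence to an explicit Henselian-local computation in the direction transverse to $D$, to exhibit a single regular ``roof'' through which both operations factor, and then to glue and descend. First I would observe that over $S\smallsetminus D$ the root stack is just $S$ and a smooth $\PP^1$-fibration there is exactly a smooth conic bundle, so both prescribed operations are isomorphisms away from $D$ and all content is concentrated along the regular divisor $D$. Working \'etale-locally on $S$, I would complete a local equation $f$ of $D$ to a regular system of parameters, present $\sqrt{(S,D)}=[\Spec(\cO_S[t]/(t^2-f))/\mu_2]$ as in \S\ref{ss.rootstacks}, and note that near $D$ the smooth $\PP^1$-fibration is pulled back from the transverse one-parameter situation; it therefore suffices to treat $S=\Spec(R)$ for a Henselian DVR $R$ with uniformizer $f$. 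Since $H^1(\PP^1,T_{\PP^1})=0$, Lemma \ref{lem.liftisohenselian} pins down the smooth $\PP^1$-fibration $P$ carrying a nontrivial $\mu_2$-action on the central fibre, up to unique $\mu_2$-equivariant isomorphism, as the standard model $P=[\PP^1_B/\mu_2]$ with $B=R[t]/(t^2-f)$ and action $([x:y],t)\mapsto([x:-y],-t)$; this same rigidity will later supply the global uniqueness clause.

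Next I would realize both operations through a single regular roof. Blowing up the $\mu_2$-stabilizer locus --- here the two fixed sections $[1:0]$ and $[0:1]$ over the gerbe, a regular codimension-$2$ centre --- produces a regular $W=\mathrm{Bl}_{p_1,p_2}\PP^1_B$ whose central fibre is a chain $E_1-\tilde C-E_2$ with $E_i^2=-1$ and $\tilde C^2=-2$, and a direct computation shows $\mu_2$ acts trivially on the exceptional ``end components'' $E_1,E_2$ and nontrivially on the strict transform $\tilde C$. The ends are $(-1)$-curves, so the equivariant contraction collapsing them (inverse to the blow-up) returns $P$; the ``middle component'' $\tilde C$ is the unique fibre curve of $\omega^\vee$-degree zero, so the relative anticanonical contraction over $\sqrt{(S,D)}$ collapses it to an $A_1$ point, yielding $[X_B/\mu_2]$ with $X_B=\Spec B[a,b]/(ab-t^2)$ near the node and $\mu_2$ acting by $t\mapsto-t$. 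This collapsed family is Fano over $\sqrt{(S,D)}$ with trivial stabilizer action, so Proposition \ref{prop.coarsemodulispace} descends it to a conic bundle $X\to S$; the invariant-ring computation $(B[a,b]/(ab-t^2))^{\mu_2}=R[a,b]/(ab-f)$ identifies $X$ near the node, shows it is regular, and exhibits its reduced nodal central fibre $L_1\cup L_2$, so the degeneration is mild.

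Running the second operation on $X$ reproduces the same roof: the pullback $[X_B/\mu_2]$ has relative singular locus the node, and blowing this up resolves the $A_1$ point, returning $W$ with $\tilde C$ reappearing as the $(-2)$-exceptional curve bearing the nontrivial action; collapsing the two end components $E_i$ then gives back $P$, so in the local model the two operations are mutually inverse. To globalize I would check that the blow-up centres (the stabilizer locus; the relative singular locus over $\mathcal{G}_D$) and the contracted loci (middle, respectively end, components) are canonically defined and compatible with \'etale base change, so the local constructions glue to morphisms of Deligne-Mumford stacks over all of $S$; the middle contraction exists globally as the relative anticanonical model and the end contraction as the relative contraction of the two $(-1)$-curve families, with the final step the descent of Proposition \ref{prop.coarsemodulispace}. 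Regularity of $X$, smoothness of $P$, and ``the fibre is singular iff $s\in D$'' are \'etale-local and follow from the model, and uniqueness of the comparison isomorphisms follows from their prescribed restriction over $S\smallsetminus D$ together with the rigidity of Lemma \ref{lem.liftisohenselian}. That every conic bundle with $X$ regular and singular locus exactly $D$ arises this way --- and in particular is mildly degenerate --- follows because a non-reduced double-line fibre over the generic point of $D$ would force $X$ to be singular at the points where the doubled line meets the residual quadratic, contradicting regularity.

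I expect the main obstacle to be the global existence and mutual compatibility of the two contractions: showing that ``collapse the middle components'' and ``collapse the end components'' are genuine representable morphisms of Deligne-Mumford stacks restricting to the local blow-downs, that they interact correctly with the descent of Proposition \ref{prop.coarsemodulispace}, and that the resulting identification is canonical (``up to unique isomorphism''), for which the rigidity furnished by Lemma \ref{lem.liftisohenselian} is the essential input.
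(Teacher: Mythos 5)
Your plan follows the paper's own route (rigidity via Lemma \ref{lem.liftisohenselian}, an explicit local roof obtained by blowing up the stabilizer locus, anticanonical contraction of the middle components, descent by Proposition \ref{prop.coarsemodulispace}, and regularity of the total space to exclude double lines), and your local computations agree with the paper's explicit charts. But the step that carries the real content for a general base is missing. The reduction ``it therefore suffices to treat $S=\Spec(R)$ for a Henselian DVR'' is unjustified: the phrase ``near $D$ the smooth $\PP^1$-fibration is pulled back from the transverse one-parameter situation'' is precisely the assertion that has to be proved, not an observation. The divisor $D$ has points of arbitrary codimension in $S$, and everything later --- gluing the local constructions, regularity of the descended conic bundle, the description of its fibers, uniqueness --- must be checked at those points; a DVR sees only the generic points of $D$. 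The paper proves the needed statement by applying Lemma \ref{lem.liftisohenselian} over the strict Henselization $A'^{sh}$ of the root-stack chart at an \emph{arbitrary} point of $D$ (using $A^{sh}\otimes_AA'\cong A'^{sh}$, \cite[18.8.10]{EGAIV}), comparing $P$ with the standard model $\widehat{P}=\PP([\Spec(A'[u,v])/\mu_2])$: the closed fibers are equivariantly isomorphic because both carry nontrivial $\mu_2$-actions on $\PP^1$ over a separably closed residue field, and $H^1(\PP^1,T_{\PP^1})=0$ gives rigidity. Your invocation of the same lemma only over the DVR cannot be bootstrapped into this statement; as written, the first step of your argument assumes its own conclusion.

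The second gap is the reverse inclusion. To prove that \emph{every} conic bundle $X\to S$ with $X$ regular and singular fibers exactly over $D$ has mild degeneration and is matched by the two operations, you need an \'etale-local normal form for such an $X$ at \emph{every} point of $D$. Your double-line argument (singularity of $X$ at the points where the double line meets the residual quadratic) is correct in content but applies only at the generic points of $D$, and it does not by itself allow you to run the reverse construction on an arbitrary $X$: without a normal form you cannot verify that the pullback to $\sqrt{(S,D)}$ has $A_1$-singularities along the nodes, that blowing up the relative singular locus over the gerbe resolves them, or that the forward construction then returns $X$ --- you have checked these only for the $X$ you built from $P$, which is not enough to conclude that the operations are mutually inverse on the stated classes of objects. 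The paper closes this by diagonalizing the defining ternary quadratic form over $\cO_{S,s}$, first at the generic point of each component of $D$ and then at an arbitrary $s\in D$ (here the hypothesis that the degeneracy locus is exactly the \emph{regular} divisor $D$ is used), obtaining $fx^2+ay^2+bz^2$ with $a$, $b$ units, hence an \'etale-local isomorphism with $\widehat{P}_3=\Proj(A[x,y,z]/(xz-fy^2))$, after which both constructions and their mutual inverseness become explicit. Both gaps are repairable with tools you already cite, but as submitted the proof establishes the correspondence only transversally to the generic points of $D$ and essentially only in one direction.
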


Proposition \ref{prop.coniccontraction} deals with flat projective families of
reduced curves with at most nodes as singularities.
This is the setting of (pre)stable curves, for which there are known
constructions to perform
the contractions indicated in the statement of Proposition \ref{prop.coniccontraction}.
Collapsing the ``middle components'' of a flat family of at-most-$3$-component
genus $0$ prestable curves $\pi\colon C\to T$ is achieved by the relative
anticanonical model $\Proj\big(\bigoplus_{m\ge 0}\pi_*((\omega_{C/T}^\vee)^m)\big)$.
The setting where ``end components'' are to be collapsed is one of
families with only $1$- and $3$-component genus $0$ prestable curves,
and \'etale locally there exists a section $\sigma$ which avoids the end components
of $3$-component fibers;
then $\Proj\big(\bigoplus_{m\ge 0}\pi_*(\cO_C(\sigma)^m)\big)$ gives the
desired contraction, and although sections exist only \'etale locally,
the construction may be performed globally over the base, as described in
\cite{kreschflattening}.

\begin{proof}
Let $P\to \sqrt{(S,D)}$ be a
smooth $\PP^1$-fibration,
such that $\mu_2$ acts nontrivially on $P_s$ for every $s\in D$ in the
notation from the discussion of stabilizers from \S\ref{ss.rootstacks}.
Let $\Spec(A)$ be an affine neighborhood in $S$ of a point $s\in D$, on which
$D$ is defined by the vanishing of a function $f\in A$.
Setting
\[ A':=A[t]/(t^2-f), \]
then, we have an open substack of $\sqrt{(S,D)}$ isomorphic to
\[
[\Spec(A')/\mu_2],
\]
where $\mu_2$ acts by scalar multiplication on $t$.
We claim that there is an \'etale neighborhood $\Spec(B)\to \Spec(A)$ of $s$ such
that for the rank $2$ vector bundle and projectivization
\[ V:=[\Spec(A'[u,v])/\mu_2]\qquad\text{and}\qquad \widehat{P}:=\PP(V), \]
where $\mu_2$ acts by scalar multiplication on $u$ and trivially on $v$,
we have
\[ \Spec(B)\times_{S}P\cong 
\Spec(B)\times_{S}\widehat{P}. \]
Equivalently,
\[ \Spec(A^{sh})\times_SP\cong 
\Spec(A^{sh})\times_{S}\widehat{P}, \]
where $A^{sh}$ is a strict Henselization of the local ring of $\Spec(A)$ 
at $s$.
By \cite[18.8.10]{EGAIV}, we have
\[ A^{sh}\otimes_AA'\cong A'^{sh}, \]
where $A'^{sh}$ is an analogous strict Henselization of $A'$.
So, it suffices to verify that
\[
\Spec(A'^{sh})\times_{\sqrt{(S,D)}}P\qquad \text{and}\qquad \Spec(A'^{sh})\times_{\sqrt{(S,D)}}\widehat{P}
\]
are equivariantly isomorphic.
By Lemma \ref{lem.liftisohenselian} it is enough
to check that the respective fibers over the closed point of
$\Spec(A'^{sh})$ are equivariantly isomorphic, and this is so
by the hypothesis of nontrivial $\mu_2$-action on fibers
of $P$ over the gerbe of the root stack.
In particular, the stabilizer locus of $P$ is a
degree $2$ \'etale representable cover of the gerbe of the root stack.

The blow-up $P_1$ of the locus of $P$ with $\mu_2$-stabilizer therefore
has geometric fibers over points of $D$ with three irreducible components and
nontrivial $\mu_2$-action only on the ``middle components''.
After collapsing ``middle components'' to $P_2$, the
geometric fibers have two irreducible components over points of $D$ and
trivial $\mu_2$-action.
By Proposition \ref{prop.coarsemodulispace}, $P_2\to \sqrt{(S,D)}$ descends to a
conic bundle $P_3\to S$.
To see that $P_1$ is the blow-up of $P_2$ along the relative singular
locus over the gerbe of the root stack, we use the fact that the
constructions described in the statement are compatible with base change
by an \'etale morphism to $S$ and carry out the constructions with
$\widehat{P}$:
\begin{align*}
\widehat{P}_1&=[\Proj(A'[p,q,r,w]/(pr-tq^2,pw-t^2qr,qw-tr^2))/\mu_2],\\
\widehat{P}_2&=[\Proj(A'[x,y,z]/(xz-t^2y^2))/\mu_2],\\
\widehat{P}_3&=\Proj(A[x,y,z]/(xz-fy^2)).
\end{align*}
Since $\widehat{P}_3$ is regular, so is $P_3$.
The steps, performed in reverse starting from $P_3$, yield $P$
up to unique isomorphism.

A conic bundle $X\to S$ with $X$ regular and
singular fibers precisely over points of $D$ has, locally,
defining equation $fx^2+ay^2+bz^2=0$ for
units $a$ and $b$, as we see using
diagonalizability of a quadratic form
over $\cO_{S,s}$, first with $s$ the generic point of a component of $D$ and
then with arbitrary $s\in D$.
(Such an observation for quadric bundles is given in \cite[Prop.\ 1.2.5]{auelbernardarabolognesi}.)
So, \'etale locally, there is an isomorphism with $\widehat{P}_3$,
and the construction may be carried out to yield a
smooth $\PP^1$-bundle over $\sqrt{(S,D)}$.
From this, the construction of blowing up, collapsing, and descending to $S$
recovers $X\to S$ up to unique isomorphism.
\end{proof}

\begin{rema}
\label{rem.goodmodelsconicbundles}
The Sarkisov program establishes the existence of good models of
conic bundles over arbitrary varieties \cite{sarkisovconicbundle},
via classical birational geometry.
A more general version of this result may
be obtained through the use of root stacks.
Details are presented in \cite{oesinghaus}.
\end{rema}

\section{Brauer-Severi surface bundles}
\label{sec.brauerseverisurfacebundles}
In the following sections we prove Theorem \ref{thm:P2bundle}
using root stacks.

\begin{defi}
\label{defn.twodimbrauerseveri}
Let $S$ be a locally Noetherian scheme, in which $3$ is invertible in the
local rings.
A \emph{Brauer-Severi surface bundle} over $S$ is a flat projective
morphism $\pi\colon X\to S$ such that the fiber over every geometric point of $S$
is isomorphic to one of the following:
\begin{itemize}
\item a del Pezzo surface of degree $9$ (i.e., since a
geometric fiber, $\PP^2$),
\item the union of three Hirzebruch surfaces $\F_1$,
meeting transversally, such that any pair of them
meets along a fiber of one and the $(-1)$-curve of the other,
\item an irreducible scheme whose underlying reduced subscheme is isomorphic
to the cone over a twisted cubic curve.
\end{itemize}
If only fibers of the first two types appear, then we say that the
Brauer-Severi surface bundle has \emph{mild degeneration}.
\end{defi}

\begin{rema}
\label{rem.10sections}
For the fibers $X_s$ appearing in a Brauer-Severi surface bundle
with mild degeneration, the dual $\omega^\vee_{X_s}$ of the
dualizing sheaf is very ample and we have $h^0(X_s,\omega^\vee_{X_s})=10$.
This is standard for del Pezzo surfaces of degree $9$ and may be verified
with a short computation for unions of three Hirzebruch surfaces.
\end{rema}

\begin{defi}
\label{def.balancedaction}
Let $k$ be a field of characteristic different from $3$.
An action of $\mu_3$ on $\PP^2_k$ is said to be \emph{balanced} if,
after passing to an algebraic closure $\bar k$ of $k$ and making
a change of projective coordinates, the action is
$\mu_3\times \PP^2_{\bar k}\to \PP^2_{\bar k}$,
\[ (\alpha,x:y:z)\mapsto (\alpha x:\alpha^2y:z). \]
\end{defi}

We formulate an analogue of Proposition \ref{prop.coniccontraction}.

\begin{prop}
\label{prop.DP9contraction}
Let $S$ be a regular scheme, such that $3$ is invertible in the local rings
of $S$, and let $D\subset S$ be a regular divisor.
Then the operations described below
identify, up to unique isomorphism:
\begin{itemize}
\item smooth $\PP^2$-fibrations over $\sqrt[3]{(S,D)}$
having balanced $\mu_3$-actions on fibers over the gerbe of the root stack, with
\item Brauer-Severi surface bundles $\pi\colon X\to S$
such that $X$ is regular and the fiber $\pi^{-1}(s)$ over
a point $s\in S$ is singular if and only if $s\in D$; all such
bundles have mild degeneration.
\end{itemize}
Given a smooth $\PP^2$-fibration $P\to \sqrt[3]{(S,D)}$ with
balanced $\mu_3$-actions on fibers over the gerbe of the root stack,
$P$ has distinguished loci 
\begin{itemize}
\item $P_\mu$: locus with nontrivial $\mu_3$-stabilizer,
\item $P_\lambda$:  union of lines on the geometric $\PP^2$-fibers
joining points of $P_\mu$.
\end{itemize}
On the blow-up  $B\ell_{P_\mu}P$ of $P$ we have a distinguished locus
\begin{itemize}
\item $P_c$: the union of codimension $2$ components of the
stabilizer locus.
\end{itemize}
On the second blowup $B\ell_{P_c}(B\ell_{P_\mu}P)$ we have the locus
\begin{itemize}
\item $\widetilde{P}_\lambda$: the proper transform of $P_\lambda$.
\end{itemize}
Then a Brauer-Severi surface bundle $\pi: X\ra S$ with mild degeneration
results by 
\begin{itemize}
\item 
collapsing first the degree $8$ components of fibers over $D$ of
\[ B\ell_{\widetilde{P}_\lambda}(B\ell_{P_c}(B\ell_{P_\mu}P)), \]
\item 
collapsing the degree $3$ components, 
\item 
descending to $S$.
\end{itemize}

Given a Brauer-Severi surface bundle $\pi:X\ra S$ we
obtain the associated smooth $\PP^2$-fibration over the root stack
$\sqrt[3]{(S,D)}$ (up to unique isomorphism)
by 
\begin{itemize}
\item pulling back to $\sqrt[3]{(S,D)},$
\item 
blowing up the relative singular locus of the
relative singular locus over the gerbe of the root stack,
\item 
blowing up the relative singular locus over the gerbe of the root stack,
and 
\item contracting the high-degree ($\ge 7$) components above the
gerbe of the root stack.
\end{itemize}
\end{prop}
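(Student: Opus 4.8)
The plan is to mirror the proof of Proposition~\ref{prop.coniccontraction}: reduce both directions to a single \'etale-local standard model by means of Lemma~\ref{lem.liftisohenselian}, perform the blow-ups, contractions, and descent explicitly on that model, and use compatibility of every operation with \'etale base change to globalize. Concretely, with $\Spec(A)$ an affine neighborhood of $s\in D$ on which $D=\{f=0\}$, put $A'=A[t]/(t^3-f)$, so that $\sqrt[3]{(S,D)}$ is locally $[\Spec(A')/\mu_3]$ with $\mu_3$ scaling $t$. Let $V:=[\Spec(A'[x,y,z])/\mu_3]$ with $\mu_3$ acting with weights $1,2,0$ on $x,y,z$, and set $\widehat P:=\PP(V)$; over the gerbe $(t=0)$ the induced fiber action is $(\alpha,x:y:z)\mapsto(\alpha x:\alpha^2 y:z)$, i.e.\ balanced. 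Since $H^1(\PP^2,T_{\PP^2})=0$, Lemma~\ref{lem.liftisohenselian} applied over a strict Henselization $A'^{sh}$ of $A'$ shows that any smooth $\PP^2$-fibration over $\sqrt[3]{(S,D)}$ with balanced $\mu_3$-action over the gerbe is equivariantly isomorphic to $\widehat P$ there, the closed fibers being equivariantly isomorphic because over the separably closed residue field a balanced action diagonalizes with weights $1,2,0$. It therefore suffices to run the construction on $\widehat P$ and check regularity of the descended object; the reverse steps then recover $P$ up to unique isomorphism.

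Next I would carry out the forward construction on $\widehat P$ by an explicit toric-style weight computation, producing a chain of $\Proj$-descriptions analogous to $\widehat P_1,\widehat P_2,\widehat P_3$ in the conic case. The locus $P_\mu$ is, over the gerbe, the three fixed points $[1:0:0],[0:1:0],[0:0:1]$, and $P_\lambda$ the coordinate triangle joining them. Blowing up the three sections $P_\mu$ produces exceptional divisors $E_i\cong\PP^2$; reading off the $\mu_3$-weights of the normal spaces (each of type $\{1,1,2\}$) shows that each $E_i$ carries a pointwise-fixed line, while the proper transform of the fiber $\PP^2$ (a degree-$6$ del Pezzo surface) carries no fixed curve, its exceptional and coordinate curves being projectivizations of weight-$\{1,2\}$ spaces. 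These three fixed lines are precisely the codimension-$2$ stabilizer components $P_c$; blowing them up and then the proper transform $\widetilde P_\lambda$ yields $B\ell_{\widetilde P_\lambda}(B\ell_{P_c}(B\ell_{P_\mu}\widehat P))$. I would then tabulate the components of the fiber over $D$ with their $\mu_3$-stabilizers and their degrees with respect to the relative anticanonical sheaf, collapse the degree-$8$ components and afterwards the degree-$3$ components by the relative anticanonical (respectively section-based) $\Proj$ constructions of \cite{kreschflattening}, and observe that the residual stabilizer actions on the remaining fiber components are trivial. Proposition~\ref{prop.coarsemodulispace} then descends the family to $\pi\colon X\to S$; regularity of $X$ and the fact that the fiber over $D$ is a transverse union of three copies of $\F_1$ with each pair meeting along a fiber of one and the $(-1)$-curve of the other are read off from the explicit affine charts.

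For the reverse direction, I would first establish an \'etale-local normal form for a Brauer-Severi surface bundle $\pi\colon X\to S$ with $X$ regular and singular fibers exactly over the regular divisor $D$: the geometric fiber over $D$ is necessarily the three-$\F_1$ configuration (the cone fiber is excluded because $D$ is regular, so there is no deeper stratum), and regularity of $X$ together with the rigidity of this configuration pins down $X$, \'etale locally over $S$, as the descended standard model of the forward construction. Pulling $X$ back to $\sqrt[3]{(S,D)}$, the relative singular locus over the gerbe is the three double curves meeting in a triple-point section, whose own relative singular locus is that triple-point section; blowing up first the triple-point section, then the proper transforms of the double curves, and finally contracting the high-degree $(\ge 7)$ components inverts the forward construction and returns $\widehat P$, hence $P$, up to unique isomorphism.

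The principal obstacle is the combinatorial bookkeeping in the three successive blow-ups of $\widehat P$: correctly identifying $P_c$ and $\widetilde P_\lambda$, computing the degrees and stabilizer data of every component of the fiber over $D$, and verifying that the collapses of the degree-$8$ and degree-$3$ components are realized by the relative $\Proj$ constructions and commute with \'etale base change, so as to produce exactly the prescribed three-$\F_1$ configuration. Once the explicit charts are available, the regularity of the descended $X$ and the \'etale-local normal form underlying the reverse direction are comparatively routine, as in the conic case.
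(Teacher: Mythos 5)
Your proposal has the right skeleton (\'etale-local standard model via Lemma \ref{lem.liftisohenselian}, the three blow-ups, two collapses, descent via Proposition \ref{prop.coarsemodulispace}, local normal form for the reverse direction), but it fails at the step that is the actual technical core of the proposition: the two birational collapses of \emph{surface} components. You propose to realize them "by the relative anticanonical (respectively section-based) $\Proj$ constructions of \cite{kreschflattening}", but those tools are specific to families of genus-$0$ prestable \emph{curves} (collapsing middle/end components of one-dimensional fibers) and have no counterpart for contracting surface components onto curves or points; this is exactly why the paper develops the Appendix on birational contractions and warns that these constructions "are more complicated than the corresponding constructions for conic bundles." Worse, the identification of the first collapse with the relative anticanonical model is provably wrong: the relative dualizing sheaf of the triple blow-up is $\omega^\vee-2E_1-E_2-E_3$, and the paper's intersection tables give it degree $0$ not only on the curves contracted by the first collapse (e.g.\ $\tilde d_u$, $\tilde\lambda_{vw}$) but also on $\tilde f_{2,u}$, hence on every curve class of $\Sigma_3$; so an anticanonical contraction, if it exists at all, would collapse $\Sigma_3$ as well and cannot produce the intermediate model in which only the degree-$8$ divisor $\widetilde{E}_1\cup E_3$ is contracted. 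The paper instead contracts using explicitly chosen semiample combinations $\gamma(\beta(\alpha\omega^\vee-E_1)-E_2)-E_3$, twisted by multiples of $E_1-E_2+E_3$, justified by Proposition \ref{prop.semiampleoverStoscheme}, whose normal-flatness and normal-cone conclusions are then precisely what drive the $\mathsf A_2$ and $\delta_x\delta_y\delta_z=t^3$ singularity analysis, the $\mu_3$-invariant computation, and the regularity of the descended bundle. Without a substitute for this machinery, the forward construction, the contraction steps of your reverse construction, and the regularity claims are all unsupported.

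There are two further gaps. First, "all such bundles have mild degeneration" is part of the statement, and your parenthetical justification ("the cone fiber is excluded because $D$ is regular, so there is no deeper stratum") is not an argument: Definition \ref{defn.twodimbrauerseveri} places no constraint tying cone fibers to singular points of the discriminant, so their exclusion must be proven. The paper devotes Section \ref{sec.conc} to this: dimension $1$ by an embedding-dimension bound, dimension $2$ by purity for the Brauer group (Proposition \ref{prop.Brbasic}(iii)) to extend the $\PP^2$-bundle over the missing codimension-$2$ points followed by a comparison of relative anticanonical algebras, and a slicing argument for $\dim(S)\ge 3$. Second, in your reverse direction, Lemma \ref{lem.liftisohenselian} cannot "pin down" $X$ \'etale-locally near the degenerate fibers, since that lemma requires both families to be \emph{smooth} over the Henselian base, which fails exactly where it is needed; the paper instead derives the local equation $w_1w_2w_3=f$ (resp.\ $w_1w_2=f$) directly from the component structure (Lemma \ref{lem.pi0}, Remark \ref{rem.pi0}) and a tangent-space computation, and only then resolves and contracts.
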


The constructions in Proposition \ref{prop.DP9contraction} are
more complicated than the corresponding constructions for conic bundles.
Their justification relies on machinery presented in an
Appendix on birational contractions.

The proof, given in the following sections, has several ingredients:
\begin{itemize}
\item the forwards construction, which produces a
mildly degenerating Brauer-Severi surface bundle
out of a smooth of $\PP^2$-fibration over the root stack, is carried out
using explicit birational contractions (Section \ref{sec.forwards});
\item the reverse construction, starting with a
mildly degenerating Brauer-Severi surface bundle,
is carried out with similiar techniques (Section \ref{sec.reverse});
\item the verification that every
Brauer-Severi surface bundle $\pi\colon X\to S$, such that $X$ is regular and
precisely the fibers over a regular divisor on $S$ are singular, has
mild degeneration, based on a combination of the
forwards construction and purity for the Brauer group (Section \ref{sec.conc});
\end{itemize}

\section{Forwards construction}
\label{sec.forwards}
In this section we carry out the forwards construction
of Proposition \ref{prop.DP9contraction}.

Let $P\to \sqrt[3]{(S,D)}$ be a smooth $\PP^2$-fibration, such that the
$\mu_3$-action on $P_s$ for every $s\in D$ is balanced.
As in the proof of Proposition \ref{prop.coniccontraction}, there is an affine \'etale
neighborhood $\Spec(B)\to S$ of a point $s\in D$,
where the pre-image of $D$ is principal, say,
defined by the vanishing of $f\in B$, and setting
\[
B':=B[t]/(t^3-f),\qquad V:=[\Spec(B'[u,v,w])/\mu_3],\qquad \widehat{P}:=\PP(V),
\]
we have an isomorphism
\begin{equation}
\label{eqn.isoPV}
\Spec(B)\times_SP\cong \widehat{P}.
\end{equation}
Here, $\mu_3$ acts on by scalar multiplication on
$t$ and $u$, by scalar multiplication squared on $v$, and trivially on $w$.
In particular, $P_\mu$ is a degree $3$ \'etale representable cover
of the gerbe of the root stack, and after blowing up
$P_c$ is smooth over the gerbe of the root stack, consisting of
a line in every $\PP^2$ fiber of the exceptional divisor, and
$\widetilde{P}_\lambda$ is also smooth over the gerbe of the root stack,
consisting geometrically of three pairwise disjoint $(-2)$-curves in every
component of fibers over the gerbe of the root stack,
isomorphic to the minimal resolution of the singular cubic surface
of type $3\mathsf A_2$ \cite{brucewall}.

At the successive blow-ups we have divisors at our disposal as described
below; for the description of combinations that are relatively ample over $S$
we use the fact about ampleness mentioned in \S \ref{ss.cms} in combination with
the equivalence \cite[2.6.2]{EGAIII} of ampleness of a line bundle on a projective scheme with
ampleness of its restriction to every irreducible component, to
characterize relatively ample line bundles.
We adopt the convention that the same notation will be used for
a divisor or line bundle and its pullbacks under the morphisms described below.
But in this section, tilde (\,$\tilde{\ }$\,), as in $\widetilde{E}_1$, always
indicates proper transform.
We let  $\omega^\vee$ denote 
the relative anticanonical bundle
of $P$ over $\sqrt[3]{(S,D)}$.
\begin{itemize}
\item We analyze the first blow-up 
$$
B\ell_{P_\mu}P\ra P.
$$ 
In addition to $\omega^\vee$, we have the
exceptional divisor $E_1$; on geometric fibers over the gerbe of the root stack, 
$E_1$ consists of three components $E_u$, $E_v$, $E_w$, each isomorphic to $\PP^2$, attached along three pairwise disjoint
exceptional divisors $d_u$, $d_v$, $d_w$ of a component $\Sigma_1$ isomorphic to a
degree $6$ del Pezzo surface (DP6),
with the remaining exceptional divisors $\lambda_{uv},\lambda_{vw}, \lambda_{uw}\subset \Sigma_1$
contained in the proper transform of  $P_\lambda$. 
The intersection numbers are:
\[
\begin{array}{c|cc}
&d_u& \lambda_{vw} \\ \hline
\omega^\vee&0&3\\
E_1&-1&2
\end{array}
\]
The projection formula supplies the first row. 
Entries in the second row are intersection numbers on DP6 with $d_u+d_v+d_w$.

Multiples of $\alpha\omega^\vee-E_1$ for $\alpha\in \Q$, $\alpha>2/3$ are relatively ample.
\item
We analyze the second blowup 
$$
B\ell_{P_c}(B\ell_{P_\mu}P)\ra B\ell_{P_\mu}P.
$$  
Let $E_2$ be its exceptional divisor. 
With notation as above 
and components $\ell_u\subset E_u$, etc., of the stabilizer locus $P_c$,
we have
the exact sequence of normal bundles
\[
0\to N_{P_c/E_1}|_{\ell_u}\to N_{P_c/B\ell_{P_\mu}P}|_{\ell_u} \to N_{E_1/B\ell_{P_\mu}P}|_{\ell_u}\to 0
\]
with 
$$
\deg(N_{\ell_u/E_u})=1\quad \text{  and } \quad \deg(N_{E_u/B\ell_{P_\mu}P}|_{\ell_u})=-1
$$
(and similarly for $\ell_v$ and $\ell_w$).
On geometric fibers over the gerbe of the root stack, 
$E_2$ consists of three components  $F_u$, $F_v$, $F_w$, each
isomorphic to a Hirzebruch surface $\F_2$,
glued along $(-2)$-curves $c_u$, respectively, $c_v$, $c_w$,  to $\widetilde{E}_u$, respectively, $\widetilde{E}_v$, $\widetilde{E}_w$,
and along a fiber $f_{2,u}$  (respectively, $f_{2,v}$, $f_{2,w}$) to $\Sigma_2$,
the blow-up of $\Sigma_1$ at three points; note that $\Sigma_2$ is isomorphic to the minimal resolution of the singular
cubic surface of type $3\mathsf A_2$.
The class of the exceptional divisor $E_2$, restricted to $F_u$, etc., is 
$\cO_{\PP(\cO_{\PP^1}(1)\oplus \cO_{\PP^1}(-1))}(-1)$.
The further restriction to $c_u$, etc.,  is
$\cO_{\PP(\cO_{\PP^1}(1)\oplus 0)}(-1)\cong \cO_{\PP^1}(1)$
and to $f_{2,u}$, etc.,  is isomorphic to $\cO_{\PP^1}(-1)$;
these explain the last two entries in the third row:
\[
\begin{array}{c|cccc}
&\tilde{d}_u&\tilde\lambda_{vw}& c_u & f_{2,u}\\ \hline
\omega^\vee&0&3&0 & 0\\
E_1&-1&2& -1& 0\\
E_2&1&1&1 & -1
\end{array}
\]
Intersection numbers on $\Sigma_2$
give
the first two entries in the third row.

Multiples of $\beta(\alpha \omega^\vee-E_1)-E_2$ with 
$$
\alpha>(1/3)(2+1/\beta) \qquad \text{and} \qquad \beta>1
$$ 
are relatively ample.
\item 
We analyze the third blowup 
$$
B\ell_{\widetilde{P}_\lambda}(B\ell_{P_c}(B\ell_{P_\mu}P))\ra B\ell_{P_c}(B\ell_{P_\mu}P). 
$$
Let $E_3$ be its exceptional divisor.
On geometric fibers over the gerbe of the root stack 
$E_3$ consists of components $Q_{uv}$, $Q_{vw}$, $Q_{uw}$,
fibered over $\tilde \lambda_{uv}$, etc.
With an exact sequence of normal bundles, as in the second blowup, we see that
each component is isomorphic to $\PP^1\times \PP^1$, with
$\cO(E_3)$ of degree $-2$ along ruling sections and
$-1$ along fibers, and is glued along a fiber to the
exceptional divisor $e_u$ of a blow-up of $\widetilde{E}_u$, etc., along another fiber to the
exceptional divisor $f_u$ of a blow-up of $F_u$, etc.,
and along a ruling section (in the table, $\tilde \lambda_{vw}$, etc.)\ to $\Sigma_3\cong \Sigma_2$.
\[
\begin{array}{c|cccccc}
&\tilde d_u&e_u&\tilde\lambda_{vw}&c_u&\tilde f_{2,u}& f_u\\ \hline
\omega^\vee&0&0&3&0&0&0\\
E_1&-1&0&2&-1&0&0\\
E_2&1&0&1&1&-1&0\\
E_3&1&-1&-2&0&1&-1
\end{array}
\]

Multiples of $\gamma(\beta(\alpha \omega^\vee-E_1)-E_2)-E_3$ with
\[
\alpha>\frac{2}{3}+\frac{1}{3\beta}-\frac{2}{3\beta\gamma},\qquad
\beta>1+\frac{1}{\gamma},\qquad\text{and}\qquad
\gamma>1
\]
are relatively ample.
\end{itemize}

The degree $8$ components of geometric fibers over the root stack
$\widetilde{E}_u$, $\widetilde{E}_v$, $\widetilde{E}_w$, $Q_{uv}$, $Q_{vw}$, $Q_{uw}$
comprise the divisor
\[ \widetilde{E}_1\cup E_3 \]
in the class of $E_1-E_2+E_3$, with evident contraction
$\psi$ to a stack $Z$, isomorphic to each of the
intersections
$\widetilde{E}_1\cap E_2$, $\widetilde{E}_1\cap E_3$, $E_2\cap E_3$.
Proposition \ref{prop.semiampleoverStoscheme} is applicable to the contraction
$\psi\colon \widetilde{E}_1\cup E_3\to Z$
with the relatively ample line bundle associated with (an integral multiple of)
$\gamma(\beta(\alpha \omega^\vee-E_1)-E_2)-E_3$ for
\[ \alpha\beta>1,\qquad
(1-\alpha)\beta<1,\qquad
(2\alpha-1)\beta<1,\qquad \gamma=\frac{1}{(1-\alpha)\beta},
\]
and value of $m$, coefficient appearing in the twist of the
relatively ample line bundle by $m(E_1-E_2+E_3)$,
equal to (the same integral multiple of) $(\alpha\beta-1)/(1-\alpha)\beta$.
We obtain a contraction
\[ \nu\colon B\ell_{\widetilde{P}_\lambda}(B\ell_{P_c}(B\ell_{P_\mu}P))\to P_1, \]
restricting to $\psi\colon \widetilde{E}_1\cup E_3\to Z\subset P_1$, with
$\cI_Z/\cI_Z^2$ locally free of rank $3$.

By Remark \ref{rem.semiampleoverStoschemeii},
the effective Cartier divisor $E_1+E_3$, viewed as invertible sheaf with global section,
pushes forward under $\nu$ to a Cartier divisor on $P_1$ that we denote by
$\Psi$.
Over $\Spec(B)$,
where we have an isomorphism \eqref{eqn.isoPV} and therefore
individually defined divisors $\widetilde{E}_u$, etc.,
further Cartier divisors arise by pushing forward
\[ \widehat{G}_u:=2\widetilde{E}_u+\widetilde{E}_v+3\widetilde{F}_u+2Q_{uv}+Q_{uw} \]
and the analogously defined
$\widehat{G}_v$ and $\widehat{G}_w$ to
\[ \widehat{P}_1:=\Spec(B)\times_SP_1. \]
So the union of the blown up $\F_2$ components of $\widehat{P}_1$
over the gerbe of the root stack is a Cartier divisor,
as is $3$ times each individual blown up $\F_2$ component.
Since $\nu$ is a birational contraction of a normal scheme,
$P_1$ is normal.

We claim, $P_1$ has $\mathsf A_2$-singularities along $Z$.
This is a local assertion, so we may work over $\widehat{P}_1$, which is a quotient stack
by $\mu_3$.
We use the language of schemes (with $\mu_3$-action) in the following argument.
Fix a closed point $\hat r\in \widehat{Z}$
over $s'\in \Spec(B')$, corresponding to some $\mathfrak{m}'\subset B'$, over a given $s\in D$.
Let $n=\dim(B'_{\mathfrak{m}'})$, and let $t$, $g_1$, $\dots$, $g_{n-1}$
be a regular system of parameters, i.e., elements that form a basis of
$\mathfrak{m}'/\mathfrak{m}'^2$.
If $\hat{\mathfrak{n}}$ denotes the maximal ideal of the local ring $\cO_{\widehat{P}_1,\hat r}$,
where the residue field of $\hat r$ is a finite extension $\ell$ of
the residue field $k$ of $s'$,
then since $\widehat{Z}$ is regular there is the exact sequence of
vector spaces over $\ell$ \cite[16.9.13]{EGAIV}
\begin{equation}
\label{eqn.learnsequence}
0\to \cI_{\widehat{Z}}/\hat{\mathfrak{n}}\cI_{\widehat{Z}} \to \hat{\mathfrak{n}}/\hat{\mathfrak{n}}^2\to
\hat{\mathfrak{n}}/(\cI_{\widehat{Z}}+\hat{\mathfrak{n}}^2)\to 0,
\end{equation}
from which we learn
\[ \dim(\hat{\mathfrak{n}}/\hat{\mathfrak{n}}^2)=n+3. \]
As well, by Proposition \ref{prop.semiampleoverStoscheme},
we have the compatibility of the contraction $\hat\nu$ with base change, e.g.,
via $\{s'\}\to \Spec(B')$, and this way we learn
that the fiber of $\widehat{P}_1$ over $s'$ consists of a singular cubic surface of type
$3\mathsf A_2$ joined to three components, each isomorphic to the
blow-up of $\F_2$ at one point.
In particular,
\[
\dim(\hat{\mathfrak{n}}/(\mathfrak{m}'\cO_{\widehat{P}_1,\hat r}+\hat{\mathfrak{n}}^2))=
\begin{cases}
4,&\text{if $3$ components meet at $\hat r$}, \\
3,&\text{otherwise}.
\end{cases}
\]
In both cases, we let $x$ and $y$ be local defining equations of the
Cartier divisors on $\widehat{P}_1$, each consisting (on fibers) of one blown up $\F_2$ component with multiplicity $3$,
and $z$, of their union with multiplicity $1$;
in the case $3$ components meet at $\hat r$
we additionally let $q$ be a local defining equation for the
singular cubic surface divisor component.

We have, after adjusting by units,
\[ xy=z^3 \]
and, in the two cases,
\[ t=qz\in \hat{\mathfrak{n}}^2,\qquad\text{respectively,}\qquad t=z. \]
The kernel
\[ (\mathfrak{m}'\cO_{\widehat{P}_1,\hat r}+\hat{\mathfrak{n}}^2)/\hat{\mathfrak{n}}^2 \]
of the canonical homomorphism
$\hat{\mathfrak{n}}/\hat{\mathfrak{n}}^2\to
\hat{\mathfrak{n}}/(\mathfrak{m}'\cO_{\widehat{P}_1,\hat r}+\hat{\mathfrak{n}}^2)$
is generated by $\mathfrak{m}'$.
So the kernel is,
in the two cases, of dimension $n-1$ with basis $g_1$, $\dots$, $g_{n-1}$, respectively,
of dimension $n$ with basis $t$, $g_1$, $\dots$, $g_{n-1}$.

Suppose, first, $\hat r$ is the point where three components come together.
We know, modulo $q$, the point $\hat r$ is an $\mathsf A_2$-singular point
of the cubic surface component, with
$\hat{\mathfrak{n}}/(\mathfrak{m}'\cO_{\widehat{P}_1,\hat r}+q\cO_{\widehat{P}_1,\hat{r}}+\hat{\mathfrak{n}}^2)$
of dimension $3$, spanned by $x$, $y$, and $z$.
So $q$, $x$, $y$, and $z$ span $\hat{\mathfrak{n}}/(\mathfrak{m}'\cO_{\widehat{P}_1,\hat r}+\hat{\mathfrak{n}}^2)$;
they define a morphism to $\A^4_{B'}$ which is unramified at $\hat r$ and hence by \cite[18.4.7]{EGAIV}
restricts and lifts to a closed immersion of a
neighborhood $\Spec(C)$ of $\hat r$ in an \'etale affine neighborhood $\Spec(\Delta)$
of the point $0$ over $s'$ in $\A^4_{B'}$;
let $\delta_q$, $\delta_x$, $\delta_y$, $\delta_z\in \Delta$ denote the coordinate functions from $\A^4_{B'}$.
Then, setting $\widetilde{\Delta}:=\Delta/(t-\delta_q\delta_z)$, we see that
$\Delta\to C$ factors through
\begin{equation}
\label{eqn.deltaprime}
\widetilde{\Delta}/(\delta_x\delta_y-\delta_z^3).
\end{equation}
The element $\delta_x\delta_y-\delta_z^3$ is irreducible in the local ring of
$\widetilde{\Delta}$ at the point $0$ over $s'$, so the localization of \eqref{eqn.deltaprime}
is an integral domain, and we have found the desired local form of the
singularity at $\hat r$.

Now suppose $\hat r$ is a point of the component $\PP^1$
where two blown up $\F_2$ components meet.
The Cartier divisor defined by $x$,
restricted to the other blown up $\F_2$ component,
defines $\PP^1$ with multiplicity $1$,
and the same is true with $y$ and the first blown up
$\F_2$ component.
It follows that $x$, $y$, and a polynomial of degree $[\ell:k]$ in
a local parameter $p$ for $\PP^1$
span $\hat{\mathfrak{n}}/(\mathfrak{m}'\cO_{\widehat{P}_1,\hat r}+\hat{\mathfrak{n}}^2)$.
We argue as above, with morphism to $\A^3_{B'}$ determined by $x$, $y$, and $p$
and $\Delta\to C$ factoring through $\Delta/(\delta_x\delta_y-t^3)$.

On $P_1$ we have Cartier divisors $\Psi\cup \nu(\Sigma_3)$ and $\Psi$.
Hence $\nu(\Sigma_3)$ is a Cartier divisor, which becomes linearly equivalent
to $-\Psi$ after base change to any affine \'etale chart of
$\sqrt[3]{(S,D)}$ where the pre-image of the gerbe of the root stack is
principal.
We apply Proposition \ref{prop.semiampleoverStoscheme} with
relatively ample $\cO_{P_1}(1)$ and $m$ taken so that
the twist by $m\,\nu(\Sigma_3)$ has degree $0$ on $\tilde f_{2,u}$.
Thus we obtain a contraction
\[ \rho\colon P_1\to P_2, \]
where $P_2$ is a Brauer-Severi surface bundle over
$\sqrt[3]{(S,D)}$.
Under $\rho$ we have
$\nu(\Sigma_3)$ contracting to a copy $G$
in $P_2$ of the gerbe of the root stack,
with $\mathcal{I}_G/\mathcal{I}_G^2$ locally free of rank $4$.
An exact sequence analogous to \eqref{eqn.learnsequence}
shows that $\dim(\hat{\mathfrak{n}}/\hat{\mathfrak{n}}^2)=n+3$, where $\hat{\mathfrak{n}}$ is the
maximal ideal of the local ring, after base change to an affine \'etale chart
of $\sqrt[3]{(S,D)}$, at a closed point $\hat r$ over $G$; in particular, $P_2$ has
hypersurface singularities.

As above, we may work over $\Spec(B)$ and
perform previous analysis to show that $3$ times each $\F_1$ component over
the gerbe of the root stack is a Cartier divisor.
This leads to an \'etale local defining equation at a point $\hat r$ as above
of $\delta_x\delta_y\delta_z-t^3$, with analogous notation to that appearing in
\eqref{eqn.deltaprime}.
With notation as before, we have
an affine chart $\Spec(C)$ of
\[ \widehat{P}_2:=\Spec(B)\times_SP_2, \]
which we may without loss of
generality take to be $\mu_3$-invariant, and a closed immersion
\[ \Spec(C)\to \Spec(\Delta) \]
identifying the local ring $C_{\hat{\mathfrak{n}}}$ with
$\Delta_{\hat{\mathfrak{n}}'}/(\delta_x\delta_y\delta_z-t^3)$,
where $\hat{\mathfrak{n}}'$ denotes the maximal ideal, corresponding to the image
of $\hat r$ in $\Spec(\Delta)$.
Although $\Delta$ is not determined canonically, its strict henselization
at $\hat{\mathfrak{n}}'$ is canonically determined (up to
fixing a separable closure of the residue field): it is a
strict henselization of affine $3$-space over $B'$.

By Proposition \ref{prop.coarsemodulispace}, there exists
a scheme $P_3$, Fano over $S$, with
\[ \sqrt[3]{(S,D)}\times_SP_3\cong P_2. \]
By Remark \ref{rem.cms}, $P_2\to P_3$ is a coarse moduli space, so with
the above notation, $P_3$ has affine coordinate ring $C^{\mu_3}$.
We claim, $P_3$ is regular; equivalently,
\[ \widehat{P}_3:=\Spec(B)\times_SP_3 \]
is regular.
It suffices to verify the claim after passing to a strict henselization
$(C^{\mu_3})^{sh}$ of $P_3$.
By \cite[18.8.10]{EGAIV}, we have
\[ C\otimes_{C^{\mu_3}}(C^{\mu_3})^{sh}\cong C^{sh}. \]
Then a strict Henselization of $P_2$
(for Henselization of a stack, see \cite[\S 2.7]{AHR}) takes the form
\[ [\Spec(C)/\mu_3]\times_{\Spec(C^{\mu_3})}\Spec((C^{\mu_3})^{sh})\cong
[\Spec(C^{sh})/\mu_3]. \]
We have, $\mu_3$-equivariantly,
\[ C^{sh}\cong \Delta^{sh}/(\delta_x\delta_y\delta_z-t^3) \]
at a point where three components meet.
Hence upon taking $\mu_3$-invariants,
\[ (C^{\mu_3})^{sh}\cong (C^{sh})^{\mu_3}
\cong (\Delta^{sh})^{\mu_3}/(\delta_x\delta_y\delta_z-f). \]
The ring $(\Delta^{sh})^{\mu_3}$ is a strict Henselization of affine
$3$-space over $B$ and hence is regular.
A similar analysis takes care of the regularity at a point
where two components meet.
So, $P_3$ is regular.

\section{Reverse construction}
\label{sec.reverse}
In this section, we start with a mildly degenerating
Brauer-Severi surface bundle and carry out the reverse construction of
Proposition \ref{prop.DP9contraction}.

For every $s\in D$, the fiber $X_s$ has,
geometrically, three irreducible components.
Lemma \ref{lem.pi0} determines a degree $3$ finite \'etale cover
$$
\widetilde{D}:=\pi_0(X^{\mathrm{sm}}/S)\times_SD\to D.
$$
For $s\in D$, let $\Spec(B)\to S$ be an \'etale neighborhood of $s$ that
trivializes $\widetilde{D}\to D$.
By Remark \ref{rem.pi0},
the three sections of 
\[
\Spec(B)\times_S\widetilde{D}\to \Spec(B)\times_SD
\] 
determine three
divisors $W_1$, $W_2$, $W_3$ on $\Spec(B)\times_SX$, whose intersections with
the fiber $X_s$ of any $s\in \Spec(B)\times_SD$ are the irreducible components of
$X_s$.
Let $w_1$, $w_2$, $w_3$ be respective local defining equations of the divisors
near a singular point $r\in X_s$.
Then, after modification by a unit, we have
$f=w_1w_2w_3$.

We claim, there is an \'etale local equation
\[ 
w_1w_2w_3-f,\qquad  \text{respectively,}\qquad  w_1w_2-f,
\]
(after suitable renumbering of indices)
for $\Spec(B)\times_SX$ in $\A^3_B$, according to whether $x$ is a point of $X_s$
where $3$ or $2$ components meet.

To establish the claim, we argue as in Section \ref{sec.forwards}.
Let $\mathfrak{m}\subset B$ be the maximal ideal corresponding to $s$ and
$n=\dim(B_{\mathfrak{m}})$.
We may choose a regular system of parameters for $B_{\mathfrak{m}}$ of
the form $f$, $g_1$, $\dots$, $g_{n-1}$.
The space $\mathfrak{n}/\mathfrak{n}^2$ has dimension $n+2$.
Since the fiber $X_s$ has hypersurface singularities, at a singular
point $r\in X_s$ with corresponding maximal ideal
$\mathfrak{n}\subset \cO_{X,r}$ we have
\[ \dim(\mathfrak{n}/(\mathfrak{m}\cO_{X,r}+\mathfrak{n}^2))=3. \]
The claim follows, as in Section \ref{sec.forwards}, once we show that
$\mathfrak{n}/(\mathfrak{m}\cO_{X,r}+\mathfrak{n}^2)$ is spanned by
$w_1$, $w_2$, $w_3$, respectively by two of them and a local parameter
for the component $\PP^1$ of the singular locus of the fiber.

We have
\[ \dim((\mathfrak{m}\cO_{X,r}+\mathfrak{n}^2)/\mathfrak{n}^2)=n-1. \]
The image of $f$ in $\cO_{X,r}$ lies in $\mathfrak{n}^2$.
Indeed, if not, then $\cO_{X,r}/(f)$ would be regular, hence also the further
localization, where we pass to the residue field at the generic point of $D$, would be regular.
But the relative singular locus of $X\times_SD$ over $D$ is flat
(since it has constant Hilbert polynomial), hence some point of the
singular locus over the generic point of $D$ specializes to $r$, contradiction.
So, the images of $g_1$, $\dots$, $g_{n-1}$ form a basis of
$(\mathfrak{m}\cO_{X,r}+\mathfrak{n}^2)/\mathfrak{n}^2$.
Now, modulo $g_1$, $\dots$, $g_{n-1}$, we still have $f=w_1w_2w_3$,
where $f=0$ now defines the fiber $X_s$, and hence $w_1$, $w_2$, $w_3$
define its three irreducible components, each with multiplicity one.

Suppose, first, that $r$ is the point where all three components meet.
Then, for all $\{i,j,k\}=\{1,2,3\}$ the classes of $w_j$ and $w_k$ are a basis of
$\mathfrak{n}/(\mathfrak{m}\cO_{X,r}+w_i\cO_{X,r}+\mathfrak{n}^2)$.
So, the classes of $w_1$, $w_2$, $w_3$ are a basis of
$\mathfrak{n}/(\mathfrak{m}\cO_{X,r}+\mathfrak{n}^2)$,
and we are done in this case.

If $r$ is a point where two components meet, say,
those defined by $w_1$ and $w_2$,
then for $\{i,j\}=\{1,2\}$ and local
parameter $p$ for the component $\PP^1$ of the singular locus,
$w_j$ and $p$ are a basis of
$\mathfrak{n}/(\mathfrak{m}\cO_{X,r}+w_i\cO_{X,r}+\mathfrak{n}^2)$.
It follows that $w_1$, $w_2$, $p$ span
$\mathfrak{n}/(\mathfrak{m}\cO_{X,r}+\mathfrak{n}^2)$,
and we are done in this case as well.

The claim implies the corresponding singularity type of
$\sqrt[3]{(S,D)}\times_SX$, where we have $f=t^3$ in the coordinate ring of
an affine \'etale chart.
The two blowing up steps therefore supply a resolution of singularities of
$\sqrt[3]{(S,D)}\times_SX$.
The three contractions then may be performed, as in Section \ref{sec.forwards},
noticing by the normal cone description of Proposition \ref{prop.semiampleoverStoscheme}
that the outcome of each contraction is regular.

\section{Regular degeneracy locus implies mild degeneration}
\label{sec.conc}
In this section we complete the proof of Proposition \ref{prop.DP9contraction} by
showing that a Brauer-Severi surface bundle $\pi\colon X\to S$ with
$X$ regular and singular fibers precisely along a regular divisor $D\subset S$,
always has mild degeneration.
We may assume that $S$ is quasi-compact and integral.
We consider various cases, depending on the dimension of $S$.

If $\dim(S)=1$ then $\dim(X)=3$, hence if $s\in S$ is any closed point
then $3$ is an upper bound on the
embedding dimension of $X_s$ at any of its closed points.
So $X\to S$ has mild degeneration.

If $\dim(S)=2$ then by the previous case we know that the fiber at
the generic point of every component of $D$ is, geometrically, a union
of three Hirzebruch surfaces.
So, there is an open subscheme $S'$ of $S$ such that
$S'\times_SX\to S'$ has mild degeneration and
$\dim(S\smallsetminus S')=0$.

We apply the reverse construction (Section \ref{sec.reverse}) to
$S'\times_SX\to S'$ to obtain a smooth $\PP^2$-bundle
\[ P'\to S'\times_S\sqrt[3]{(S,D)}. \]
By Proposition \ref{prop.Brbasic} (iii), this extends to a
$\PP^2$-bundle
\[ P\to \sqrt[3]{(S,D)}. \]
We now apply the forwards construction to obtain a
mildly degenerating Brauer-Severi surface bundle
$\hat\pi\colon \widehat{X}\to S$.

By Remark \ref{rem.10sections} and the fact that a line bundle,
ample on fibers, is relatively ample, we have the ampleness of the
dual of the relative dualizing sheaf of $\widehat{X}\to S$.
This holds as well for $X\to S$, where the ampleness on geometric fibers
over $S\smallsetminus S'$ is dealt with by
\cite[4.5.14]{EGAII}, which tells us
that ampleness of a line bundle on a Noetherian scheme is independent of any
non-reduced structure, in combination with the fact that a
cone over a twisted cubic curve has Picard group isomorphic to $\Z$.

Since
\[ X\cong \Proj\big(\bigoplus_{n\ge 0} \pi_*(\omega^\vee_{X/S})^n\big)\qquad\text{and}\qquad
\widehat{X}\cong \Proj\big(\bigoplus_{n\ge 0} \hat\pi_*(\omega^\vee_{\widehat{X}/S})^n\big), \]
where the direct image sheaves are locally free for all $n\gg 0$ with isomorphic
restrictions over $S'$, they are isomorphic for all $n\gg 0$, hence $X\cong \widehat{X}$
is a Brauer-Severi surface bundle with mild degeneration.

We observe that the argument for $\dim(S)=2$ does not use the regularity of $X$,
but only the flatness of $X\to S$ and the regularity of $S'\times_SX$.

The case $\dim(S)\ge 3$ reduces to the case $\dim(S)=2$ by slicing the base.
Let $s\in S$ be a point where the geometric fiber is irreducible with
reduced subscheme a cone over a twisted cubic curve.
Let $f$, $g_1$, $\dots$, $g_{d-1}$ be a regular system of parameters for
$\cO_{S,s}$, and let $T$ be the regular two-dimensional scheme
$\Spec(\cO_{S,s}/(g_2,\dots,g_{d-1}))$.
Let $Y:=T\times_SX$.
We claim, $Y\smallsetminus Y_s$ is regular.
This is clear at points in the relative smooth locus of $Y\smallsetminus Y_s\to T\smallsetminus \{s\}$.
Consider the generic point $\eta$ of $T\cap D$ with corresponding maximal ideal $\mathfrak{m}\subset \cO_{S,\eta}$,
and a closed point $r\in (X_\eta)^{\mathrm{sing}}$,
with corresponding maximal ideal $\mathfrak{n}\subset \cO_{X,r}$.
We use the exact sequence
\[
0\to (\mathfrak{m}\cO_{X,r}+\mathfrak{n}^2)/\mathfrak{n}^2\to
\mathfrak{n}/\mathfrak{n}^2\to
\mathfrak{n}/(\mathfrak{m}\cO_{X,r}+\mathfrak{n}^2)\to 0.
\]
The space in the middle has dimension $d+1$, on the right, $3$, hence
on the left, $d-2$.
The space $\mathfrak{m}/\mathfrak{m}^2$ has dimension $d-1$ and is
spanned by $f$, $g_2$, $\dots$, $g_{d-1}$.
Moreover, $f$ lies in the kernel of
\[ \mathfrak{m}/\mathfrak{m}^2\to (\mathfrak{m}\cO_{X,r}+\mathfrak{n}^2)/\mathfrak{n}^2. \]
So $g_2$, $\dots$, $g_{d-1}$ are linearly independent in $\mathfrak{n}/\mathfrak{n}^2$,
and hence $\cO_{Y,r}$ is regular.

By the observation made after the argument for the case $\dim(S)=2$,
we may conclude that $Y\to T$ has mild degeneration, and this is a contradiction.

\section{Local analysis I}
\label{sec.local1}
For the next result we use the notation of Section \ref{sec.forwards}
but work with schemes with $\mu_3$-action: $P'$ will denote
$\Proj(B'[u,v,w])$, so $\widehat{P}=[P'/\mu_3]$, etc.

\begin{prop}
\label{prop.10sections}
With the notation as remarked above,
$\omega^\vee_{P'_2/\Spec(B')}$
is very ample over $\Spec(B')$,
and formation of its global sections commutes with
base change to an arbitrary Noetherian scheme over $\Spec(B')$.
Identifying the pullback of $\omega^\vee_{P'_2/\Spec(B')}$ to
$B\ell_{\widetilde{P}'_\lambda}(B\ell_{P'_c}(B\ell_{P'_\mu}P'))$
with 
$$
\omega^\vee-2E_1-E_2-E_3
$$ 
and
sections with cubic forms in $u$, $v$, $w$
vanishing appropriately along the $E_i$,
the sections form a free $B'$-module with basis
\[ t^3w^3,\ \ t^2uw^2,\ \ tvw^2,\ \ tu^2w,\ \ uvw,\ \ t^2v^2w,
\ \ t^3u^3,\ \ t^2u^2v,\ \ tuv^2,\ \ t^3v^3. \]
\end{prop}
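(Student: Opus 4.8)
The plan is to prove the three assertions—relative very ampleness, base change, and the explicit basis—by transporting everything to a computation on the iterated blow-up $\widetilde{P}':=B\ell_{\widetilde{P}'_\lambda}(B\ell_{P'_c}(B\ell_{P'_\mu}P'))$, where the line bundle and its sections can be read off from the geometry assembled in Section \ref{sec.forwards}.

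First I would settle very ampleness and base change together. Every geometric fiber of $P'_2\to\Spec(B')$ is either $\PP^2$ or a transverse union of three copies of $\F_1$, and by Remark \ref{rem.10sections} the dual of the dualizing sheaf is very ample on each with $h^0=10$; since these surfaces are rational with $H^i(\omega^\vee)=0$ for $i>0$, cohomology and base change applies to the flat proper morphism $P'_2\to\Spec(B')$. Hence $\pi_*\omega^\vee_{P'_2/\Spec(B')}$ is locally free of rank $10$ with formation commuting with arbitrary base change, and the evaluation map exhibits $\omega^\vee_{P'_2/\Spec(B')}$ as relatively very ample. This gives the first two claims.

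Next I would identify the line bundle. The three centers are smooth, of codimensions $3$, $2$, $2$ (the fixed locus $P'_\mu$, the lines $P'_c$ inside the exceptional $\PP^2$'s, and the transform $\widetilde{P}'_\lambda$ of the coordinate lines), so the discrepancy formula for a smooth blow-up yields
\[
\omega^\vee_{\widetilde{P}'/\Spec(B')}=\omega^\vee-2E_1-E_2-E_3.
\]
This class agrees with the pullback of $\omega^\vee_{P'_2/\Spec(B')}$ away from $\{t=0\}$, and the three contractions of Section \ref{sec.forwards}—which were built precisely as the (semiample) anticanonical-type contractions of combinations of $\omega^\vee,E_1,E_2,E_3$, collapsing exactly the curves $\tilde d_u,\tilde\lambda_{vw},\tilde f_{2,u}$ and their companions on which this class has degree $0$ in the intersection tables—identify the pullback of $\omega^\vee_{P'_2/\Spec(B')}$ with $\omega^\vee-2E_1-E_2-E_3$ globally. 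Because each contraction is birational onto a normal target, the projection formula gives $H^0(P'_2,\omega^\vee_{P'_2/\Spec(B')})\cong H^0(\widetilde{P}',\omega^\vee-2E_1-E_2-E_3)$, so a section is the same datum as a cubic form on $\PP(V)$ whose pullback vanishes suitably along the $E_i$.

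For the basis I would exploit symmetry. The whole configuration of centers is invariant under the fiberwise torus $\G_m^2$ acting diagonally on $[u:v:w]$, so the $B'$-module of sections is $\G_m^2$-graded and splits as a direct sum of its ten monomial weight spaces, each of the form $t^{a(m)}B'\cdot m$; freeness over $B'$ with a monomial basis is then automatic, and it remains only to determine the exponent $a(m)$ for each cubic monomial $m=u^av^bw^c$. I would compute these in étale-local charts at the three fixed points, tracking $m$ through the two further blow-ups and the contractions—equivalently, comparing the trivialization of $\omega^\vee_{P'_2/\Spec(B')}$ over $\{t\ne 0\}$ with the one coming from the hypersurface model $\delta_x\delta_y\delta_z=t^3$ of Section \ref{sec.forwards} near the points where three components meet. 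I expect this local bookkeeping to be the main obstacle: it is here that the balanced weights $(1,2,0)$ on $(u,v,w)$ intervene, forcing each representative $t^{a(m)}m$ to be $\mu_3$-invariant and thereby producing exponents $a(m)$ that depend on the individual monomial rather than only on its degree. Reading off $t^{a(m)}u^av^bw^c$ over the ten monomials then yields the asserted basis.
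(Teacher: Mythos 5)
Your proposal is correct and follows essentially the same route as the paper's proof: the first two assertions are deduced from the fiberwise statement of Remark \ref{rem.10sections} together with cohomology and base change, the identification of the line bundle comes from the standard behavior of the dualizing sheaf under blowing up along the smooth centers of codimension $3$, $2$, $2$, and the basis is obtained by viewing sections as cubic forms in $u$, $v$, $w$ subject to vanishing conditions read off in explicit charts of the triple blow-up. The $\G_m^2$-weight decomposition you invoke is implicit in the paper's monomial-by-monomial treatment (which likewise exploits the cyclic symmetry in $u$, $v$, $w$ to work over a single affine piece), and the ``local bookkeeping'' you defer---covering the blow-ups by charts such as $1'_{\mathrm{a}}$ and $2_{\mathrm{a}}$, where the conditions become membership in ideals like $u^3t_1B[u,t_1,v'_{1,\mathrm{a}}]/(u^3t_1^3-f)$, and translating these into $t$-divisibility conditions on the $B$-coefficients (which is also what justifies, rather than merely assumes, that each weight space has the form $t^{a(m)}B'$)---is precisely the content of the paper's proof.
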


The identification of line bundles in
Proposition \ref{prop.10sections} comes from the
observation that the line bundle associated with
$\omega^\vee-2E_1-E_2-E_3$ is the dual of the
relative dualizing sheaf of
$B\ell_{\widetilde{P}'_\lambda}(B\ell_{P'_c}(B\ell_{P'_\mu}P'))$ over $\Spec(B')$
(by standard behavior of dualizing sheaf under blowing up) and
under the morphism to $P'_2$ has direct image $\omega^\vee_{P'_2/\Spec(B')}$.

\begin{proof}
The observation from Remark \ref{rem.10sections}, that the line bundle
$\omega^\vee_{P'_2/\Spec(B')}$ on fibers is very ample with
space of global sections of dimension $10$, implies the
assertions in the first statement.

For the remaining assertion, we compute with local charts.
We work, first, over the open subscheme 
\[
W':=\Spec(B'[u,v])
\]
of $P'$ and
use notation with the letter $P$ replaced by $W$ to denote restriction
over $W'$. The blowup
$B\ell_{W'_\mu}W'$ is covered by three charts:

Chart $1$: coordinates $u$, $t_1$, $v_1$ with $t=ut_1$ and $v=uv_1$.
Equation for $E_1$: $u=0$; equation for $W'_c$: $u=v_1=0$.

Chart $2$: coordinates $v$, $t_2$, $u_2$ with $t=vt_2$ and $u=vu_2$.
Equation for $E_1$: $v=0$; trivial intersection with $W'_c$,
equation for $\widetilde{W}'_\lambda$: $t_2=u_2=0$.

Chart $3$: coordinates $t$, $u_3$, $v_3$ with $u=tu_3$ and $v=tv_3$.
Equation for $E_1$: $t=0$; equation for $W'_c$: $t=v_3=0$.

\medskip

The next blowup $B\ell_{W'_c}(B\ell_{W'_\mu}W')$ is covered by five charts, Chart $2$ and:

Chart $1'$: coordinates $u$, $t_1$, $v'_1$ with $v_1=uv'_1$.
Equation for $E_2$: $u=0$;
equation for $\widetilde{W}'_\lambda$: $t_1=v'_1=0$.

Chart $1''$: coordinates $t_1$, $v_1$, $u''$ with $u=v_1u''$.
Equation for $E_2$: $v_1=0$; trivial intersection with $\widetilde{W}'_\lambda$.

Chart $3'$: (analogous)

Chart $3''$: (analogous)
\medskip

The next blowup
$B\ell_{\widetilde{W}'_\lambda}(B\ell_{W'_c}(B\ell_{W'_\mu}W'))$ is covered
by seven charts,
of which we need just two:

Chart $1'_{\mathrm{a}}$: coordinates $u$, $t_1$, $v'_{1,\mathrm{a}}$
with $v'_1=t_1v'_{1,\mathrm{a}}$.
Equation for $E_3$: $t_1=0$.

Chart $2_{\mathrm{a}}$: coordinates $v$, $t_2$, $u_{2,\mathrm{a}}$
with $u_2=t_2u_{2,\mathrm{a}}$.
Equation for $E_3$: $t_2=0$

The union of charts $1'_{\mathrm{a}}$ and $2_{\mathrm{a}}$ contains
the generic point of every component of $E_1$, $E_2$, and $E_3$ over $W'$.
With cyclic shifts of variables $u$, $v$, $w$ we have additional
affine opens $U'$ and $V'$ of $P'$ with analogous charts of the
blow-ups.
By considering the analogous charts, too, we find that sections of 
$\omega^\vee-2E_1-E_2-E_3$ are cubic forms in $u$, $v$, $w$ with
coefficients in $B'$, such that
after applying any cyclic permutation of $u$, $v$, $w$ and setting $w$ to $1$
the image in the coordinate ring of Chart $1'_{\mathrm{a}}$
lies in the ideal
\[
u^3t_1B[u,t_1,v'_{1,\mathrm{a}}]/(u^3t_1^3-f),
\]
and the image in the coordinate ring of Chart $2_{\mathrm{a}}$ lies in
the ideal
\[
v^2t_2B[v,t_2,u_{2,\mathrm{a}}]/(v^3t_2^3-f).
\]

We compute with respect to the basis of degree $3$ monomials in $u$, $v$, $w$.
The kernel of
\[ B'^{10}\to (B/f)[u,t_1,v'_{1,\mathrm{a}}]/(u^3t_1) \]
may be computed with the help of the observation that the map factors through
$((B/f)[t]/(t^3))^{10}$, which along with
$(B/f)[u,t_1,v'_{1,\mathrm{a}}]/(u^3t_1)$ is a free $B/f$-module.
The kernel of
\[ B'^{10}\to (B/f)[v,t_2,u_{2,\mathrm{a}}]/(v^2t_2) \]
may be computed similarly.
These computations lead to the determination of the module of such forms
as the free module with the claimed basis.
\end{proof}

\section{Local analysis II}
\label{sec.local2}

We exhibit a Brauer-Severi surface fibration over a two-dimensional base,
with singular fibers along a union of two intersecting divisors and
smooth total space.
For conic bundles this is easy:
\[ \Proj(k[s,t,x,y,z]/(sx^2+ty^2+z^2)). \]
This conic bundle over $\A^2=\Spec(k[s,t])$ has smooth total space and
singular fibers along the divisor defined by $st=0$.

For Brauer-Severi surface bundles this is more complicated.
Over the generic point, the fiber is an
anticanonically embedded Brauer-Severi surface,
defined in $\PP^9$ by
\[ \frac{10\cdot 11}{2}-h^0(\PP^2,(\omega^\vee_{\PP^2})^2)=55-28=27 \]
quadratic equations.
We will obtain a equations for a Brauer-Severi surface bundle
over $\A^2$ with smooth total space and
singular fibers along the union of coordinate axes by applying results from
\S\ref{sec.brauerseverisurfacebundles}.
The existence of such a Brauer-Severi surface bundle
has been shown by Maeda \cite{maeda} using different methods
(ideals and orders in a central simple algebra).

Let $k$ be a field of characteristic different from $3$ containing a
primitive cube root of unity $\zeta$, and let $\mu_3\times \mu_3$ act on
$\A^2\times \PP^2$ over $k$, where cube root of unity $\zeta$ in the
first factor acts by
\[ (s,t,u:v:w)\mapsto (\zeta s,t,\zeta u:\zeta^2 v:w) \]
and in the second factor acts by
\[ (s,t,u:v:w)\mapsto (s,\zeta t,w:u:v). \]
Then
\[ [(\A^2\smallsetminus \{0\})\times \PP^2/\mu_3\times \mu_3] \]
is a smooth $\PP^2$-fibration satisfying the hypotheses of
Proposition \ref{prop.DP9contraction}.
There is a corresponding mildly degenerating
Brauer-Severi surface bundle
\begin{equation}
\label{eqn.brauerseveribundleX}
\pi\colon X\to \A^2\smallsetminus \{0\}.
\end{equation}
By Proposition \ref{prop.10sections} applied to the
pullback
\begin{equation}
\label{eqn.brauerseveribundleXprime}
\pi'\colon X'\to \A^2\smallsetminus \{0\}
\end{equation}
of $\pi$ by $(s,t)\mapsto (s^3,t^3)$,
first restricted over $\A^1\times(\A^1\smallsetminus \{0\})$ and then
restricted $(\A^1\smallsetminus \{0\})\times\A^1$,
the space of global sections of $\omega^\vee_{X'/\A^2\smallsetminus \{0\}}$
is the intersection of
\begin{align*}
\mathcal{S}&:=\\
&k[s,t,t^{-1}]\langle s^3w^3,s^2uw^2,svw^2,su^2w,uvw,s^2v^2w,
s^3u^3,s^2u^2v,suv^2,s^3v^3\rangle
\end{align*}
and
\begin{align*}
\mathcal{T}&:=\\
&k[s,s^{-1},t]\langle t^3\tilde w^3,t^2\tilde u\tilde w^2,t\tilde v\tilde w^2,
t\tilde u^2\tilde w,\tilde u\tilde v\tilde w,t^2\tilde v^2\tilde w,
t^3\tilde u^3,t^2\tilde u^2\tilde v,t\tilde u\tilde v^2,t^3\tilde v^3\rangle,
\end{align*}
where
\begin{align*}
\tilde u&=\zeta u+\zeta^2 v+w,\\
\tilde v&=\zeta^2 u+\zeta v+w,\\
\tilde w&=u+v+w.
\end{align*}
Indeed, the action of cyclically permuting the coordinates
$u$, $v$, $w$ is compatible with the diagonal action by
$\zeta$, $\zeta^2$, $1$ on $\tilde u$, $\tilde v$, $\tilde w$.

Only nonnegative powers of $s$ and $t$ show up in an element of
$\mathcal{S}\cap \mathcal{T}$.
For a polynomial $f\in k[s,t,u,v,w]$,
homogeneous of degree $3$ in $u$, $v$, $w$:
\begin{itemize}
\item if $s^3\mid f$, then $f\in \mathcal{S}$;
\item if $t^3\mid f$, then $f\in \mathcal{T}$;
\item generally, if we write 
$$
f=\sum_{a,b\ge 0}s^at^bf_{ab},\quad \text{with}\quad 
f_{ab}\in k[u,v,w],
$$ 
for all $a$ and $b$, then
$f\in \mathcal{S}$ if and only if $s^af_{ab}\in \mathcal{S}$
for all $a$ and $b$, and $f\in \mathcal{T}$ if and only if
$t^bf_{ab}\in \mathcal{T}$ for all $a$ and $b$.
\end{itemize}
We easily characterize the homogeneous $f\in k[u,v,w]$ of degree $3$
with $s^af\in \mathcal{S}$ for $a\le 2$:

\[
\begin{array}{c|ccc}
a&&\text{$k$-basis} \\ \hline
0&uvw\\
1&\text{above and}&u^2w,\ \ vw^2,\ \ uv^2\\
2&&\text{above and}&uw^2,\ \ v^2w,\ \ u^2v
\end{array}
\]

\noindent
Similarly, with a bit of linear algebra we
characterize the homogeneous $f\in k[u,v,w]$ of degree $3$
with $t^bf\in \mathcal{T}$ for $b\le 2$:
\[
\begin{array}{c|cccc}
b&&&\text{$k$-basis} \\ \hline
0&&u^3+v^3+w^3-3uvw\\
1&\text{and}&u^2w+\zeta vw^2+\zeta^2 uv^2,&uw^2+\zeta v^2w+\zeta^2 u^2v,&u^3+\zeta^2 v^3+\zeta w^3\\
2&\text{and}&u^2w+\zeta^2 vw^2+\zeta uv^2,&uw^2+\zeta^2 v^2w+\zeta u^2v,&u^3+\zeta v^3+\zeta^2 w^3
\end{array}
\]
With these data, we determine the space
of $f$ with $s^at^bf\in \mathcal{S}\cap \mathcal{T}$ for
$0\le a,b\le 2$ as trivial when $\min(a,b)=0$ and of
dimension $a+b-1$ when $1\le a,b\le 2$.
E.g., for $a=b=1$ the only linear combination of $uvw$, $u^2w$, $vw^2$, $uv^2$
that belongs to the space inducated for $b=1$ is,
up to scalar multiplication, $u^2w+\zeta vw^2+\zeta^2 uv^2$.
We deduce that $\mathcal{S}\cap \mathcal{T}$ is the free $k[s,t]$-module with basis
\begin{gather*}
t^3uvw,\ \ st(u^2w+\zeta vw^2+\zeta^2 uv^2),\ \ st^2(u^2w+\zeta^2vw^2+\zeta uv^2),\\
st^3(u^2w+vw^2+uv^2),
\ \ s^2t(uw^2+\zeta v^2w+\zeta^2u^2v),\\
s^2t^2(uw^2+\zeta^2v^2w+\zeta u^2v),\ \ s^2t^3(uw^2+v^2w+u^2v),\\
s^3(u^3+v^3+w^3-3uvw),
\ \ s^3t(u^3+\zeta^2v^3+\zeta w^3),\ \ s^3t^2(u^3+\zeta v^3+\zeta^2w^3).
\end{gather*}

By Proposition \ref{prop.10sections}, the basis of
$\mathcal{S}\cap \mathcal{T}$ defines a morphism
$X'\to \A^2\times \PP^9$.
With linear algebra, we find $f_1$, $\dots$, $f_{27}\in k[s,t,x_0,\dots,x_9]$
that are
\begin{itemize}
\item homogeneous of degree $2$ in
$x_0$, $\dots$, $x_9$ and at most linear in $s$, $t$.
\item zero upon substituting $s^3$ for $s$, $t^3$ for $t$,
and the $i$th basis element for $x_{i-1}$, for every $i$,
\item linearly independent over $k(s,t)$.
\end{itemize}
The polynomials are displayed in Table \ref{definingequations}.
The conditions imply that
$$
\Proj(k(s,t)[x_0,\dots,x_9]/(f_1,\dots,f_{27}))
$$ 
is the
Brauer-Severi surface that appears as generic fiber of $\pi$.
\begin{table}
\[
\begin{array}{r|c}
i&f_i \\
\hline
1&-3\zeta^2x_0x_4-x_1x_3+x_2^2\\
2&-tx_1^2+3\zeta x_0x_5+x_2x_3\\
3&-tx_1x_2-3x_0x_6+x_3^2\\
4&(\zeta-\zeta^2)x_0x_7-x_1x_5+x_2x_4\\
5&(1-\zeta^2)x_0x_8-x_1x_6+x_3x_4\\
6&-(\zeta-\zeta^2)x_0x_9-tx_1x_4+x_3x_5\\
7&\zeta^2x_1x_6+\zeta x_3x_4+x_2x_5\\
8&\frac{1-\zeta^2}{3}x_1x_8-\frac{1-\zeta^2}{3}x_2x_7+x_4^2\\
9&\frac{1-\zeta}{3}x_1x_9-\frac{1-\zeta}{3}x_3x_7+x_4x_5\\
10&\zeta^2tx_1x_4+\zeta x_3x_5+x_2x_6\\
11&-x_3x_7+(1-\zeta)x_4x_5+x_2x_8\\
12&-tx_1x_7+(1-\zeta^2)x_5^2+x_2x_9\\
13&-9sx_0^2+\zeta tx_1x_5+\zeta^2tx_2x_4+x_3x_6\\
14&-3(1-\zeta^2)sx_0x_1-tx_1x_7+(1-\zeta)x_5^2+x_3x_8\\
15&-3(1-\zeta)sx_0x_2-tx_2x_7+(1-\zeta^2)tx_4^2+x_3x_9\\
16&3\zeta sx_0x_1-x_5^2+x_4x_6\\
17&-(1-\zeta)sx_1^2-\zeta x_5x_7+x_4x_8\\
18&-(1-\zeta^2)sx_1x_2-\zeta^2x_6x_7+x_4x_9\\
19&3\zeta^2 sx_0x_2-tx_4^2+x_5x_6\\
20&-(1-\zeta)sx_1x_2-\zeta x_6x_7+x_5x_8\\
21&-(1-\zeta^2)sx_2^2-\zeta^2 tx_4x_7+x_5x_9\\
22&-3sx_0x_3-tx_4x_5+x_6^2\\
23&-(1-\zeta^2)sx_1x_3+(\zeta-\zeta^2)sx_2^2-\zeta tx_4x_7+x_6x_8\\
24&-(1-\zeta)sx_2x_3-\zeta tx_4x_8+x_6x_9\\
25&-3\zeta sx_1x_4-x_7x_9+x_8^2\\
26&-3\zeta^2 sx_1x_5-3\zeta sx_2x_4-tx_7^2+x_8x_9\\
27&-3\zeta^2 sx_2x_5-tx_7x_8+x_9^2
\end{array}
\]
\caption{The polynomials $f_1$, $\dots$, $f_{27}\in k[s,t,x_0,\dots,x_9]$.}
\label{definingequations}
\end{table}

\begin{lemm}
\label{lem.flat9}
Let $f_1$, $\dots$, $f_{27}$ be as in Table \ref{definingequations},
where $k$ is a field of characteristic different from $3$ with primitive
cube root of unity $\zeta$.
Then $k[s,t,x_0,\dots,x_9]/(f_1,\dots,f_{27})$ is finitely generated and free
as a module over $k[s,t,x_0,x_1,x_7]$, with basis
$1$, $x_2$, $x_3$, $x_4$, $x_5$, $x_5^2$, $x_6$, $x_8$, $x_9$.
\end{lemm}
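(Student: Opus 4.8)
Write $R=k[s,t,x_0,\dots,x_9]/(f_1,\dots,f_{27})$ and $A=k[s,t,x_0,x_1,x_7]$. The plan is to prove the two defining properties of a free basis separately: that the nine monomials span $R$ over $A$ (by an explicit standard-monomial reduction), and that they are $A$-linearly independent (by a generic rank computation over $k(s,t)$). For spanning, treat $s,t,x_0,x_1,x_7$ as \emph{base} variables and $x_2,x_3,x_4,x_5,x_6,x_8,x_9$ as \emph{main} variables. There is a monomial order --- compare first by total degree in the main variables, then by the weights $(v_2,\dots,v_9)=(3,1,1,0,4,3,7)$ on the main variables, then lexicographically --- for which the leading monomial $\mathrm{in}(f_i)$ of each $f_i$ is exactly its final displayed monomial in Table \ref{definingequations}; the inequalities this requires form a finite linear system in the $v_j$ solved by the stated values, and the first grading always defeats the base$\times$main and base$^2$ terms. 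These $27$ leading monomials are precisely the $28$ degree-$2$ monomials in the main variables, with the sole exception of $x_5^2$. Moreover $x_5f_{16}$ gives $x_5^3=3\zeta sx_0x_1x_5+x_4x_5x_6$ modulo $I:=(f_1,\dots,f_{27})$; reducing the main-degree-$3$ term $x_4x_5x_6$ by $f_9$ drops it to main-degree $2$, so a suitable correction of $x_5f_{16}$ is an element of $I$ with leading monomial $x_5^3$. Hence $\mathrm{in}(I)$ contains every degree-$2$ main monomial except $x_5^2$, together with $x_5^3$, so the only main monomials outside $\mathrm{in}(I)$ are $1,x_2,x_3,x_4,x_5,x_5^2,x_6,x_8,x_9$. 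Reduction modulo the $f_i$ terminates (the order is a monomial order) and introduces no denominators (each $\mathrm{in}(f_i)$ has unit coefficient), so it expresses every monomial as an $A$-linear combination of these nine; thus they generate $R$ over $A$.

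\textbf{Generic rank.} Set $B:=k(s,t)[x_0,x_1,x_7]$ and $\Gamma:=R\otimes_{k[s,t]}k(s,t)=k(s,t)[x_0,\dots,x_9]/(f_1,\dots,f_{27})$. By the sentence preceding the lemma, $\Proj(\Gamma)$ is the Brauer-Severi surface forming the generic fibre of $\pi$, anticanonically embedded in $\PP^9$; being a form of $\PP^2$ with $\cO(1)$ restricting to $\cO_{\PP^2}(3)$, its Hilbert polynomial is $d\mapsto\binom{3d+2}{2}$, of leading coefficient $9/2$. By the previous paragraph $\Gamma$ is a finitely generated graded $B$-module, and $B$ (a polynomial ring in three variables over $k(s,t)$) has Hilbert polynomial $d\mapsto\binom{d+2}{2}$, of leading coefficient $1/2$. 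Hence the rank of $\Gamma$ over the domain $B$ equals the ratio $\tfrac{9/2}{1/2}=9$ of these leading coefficients. Only the Hilbert polynomial enters, so any failure of $(f_1,\dots,f_{27})$ to be saturated is irrelevant here.

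\textbf{Independence and conclusion.} The spanning paragraph yields a surjection $A^9\to R$ from the free module on the nine monomials, hence after base change a surjection $B^9\to\Gamma$. In the exact sequence $0\to K\to B^9\to\Gamma\to 0$ over the domain $B$, additivity of rank gives $\mathrm{rank}_B K=9-9=0$; as a rank-$0$ submodule of the free module $B^9$ it is torsion-free and torsion, hence $K=0$. So $B^9\to\Gamma$ is an isomorphism and the nine monomials are $B$-linearly independent in $\Gamma$. Any $A$-linear relation among them in $R$ maps to a relation in $\Gamma$ with coefficients in $A\subseteq B$, which is therefore trivial; so the monomials are $A$-linearly independent, and with the spanning they form a free $A$-basis of $R$.

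\textbf{Main obstacle.} I expect the spanning step to be the crux. Reading off the leading monomials from Table \ref{definingequations} is immediate, but one must verify that a single monomial order realizes all $27$ simultaneously (the linear system in the $v_j$) and, more importantly, carry out the reduction exhibiting $x_5^3\in\mathrm{in}(I)$. It is precisely this computation that singles out $x_5^2$ --- but no higher power of $x_5$ --- as a basis element, and it is what forces the generic rank to be $9$ rather than infinite; everything else is either a formal consequence of the table or a standard fact about the anticanonical del Pezzo surface of degree $9$.
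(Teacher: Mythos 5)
Your proof is correct, but it follows a genuinely different route from the paper's. The paper proves the lemma by a direct algebraic verification: it writes down the seven $9\times 9$ matrices describing multiplication by $x_i$, $i\in\{2,3,4,5,6,8,9\}$, on the nine claimed basis elements (each matrix entry being exactly the reduction your spanning step performs), checks that these matrices commute pairwise and satisfy the relations corresponding to $f_1,\dots,f_{27}$, and concludes that the resulting $R$-module structure on $A^9$ makes the evident homomorphism $R\to A^9$ an isomorphism; this certifies spanning and independence simultaneously, uses nothing beyond Table \ref{definingequations}, but leaves a bulky computation ($21$ commutators plus $27$ relations) to the reader. You instead decouple the two halves: spanning by a standard-monomial argument --- and your claims check out: with weights $(3,1,1,0,4,3,7)$ on $(x_2,x_3,x_4,x_5,x_6,x_8,x_9)$ every $f_i$ has its final displayed term as strict leader (the only close calls being $f_7$, $f_{13}$, $f_{21}$, $f_{23}$, where the margins are $3>2$, $5>4$, $7>6$, $7>6$), the $27$ leaders are exactly the quadratic main monomials other than $x_5^2$, and $x_6f_9-x_5f_{16}$ has leading term $x_5^3$ with unit coefficient --- and independence by the generic-rank count over $k(s,t)$, which leans on the identification of $\Proj(\Gamma)$ with the anticanonically embedded Brauer--Severi surface asserted in the sentence preceding the lemma. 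That citation is legitimate and non-circular, since the paper establishes that sentence before and independently of the lemma (and itself reuses it immediately afterwards to deduce primality). The trade-off is clear: your Hilbert-polynomial rank trick is precisely what lets you avoid either the paper's matrix check or a Buchberger-type verification that your $28$ elements form a Gr\"obner basis, so your argument is lighter to verify; what you give up is self-containedness, since the paper's proof keeps the lemma a purely elementary statement about Table \ref{definingequations}, independent of the root-stack and contraction machinery of Sections \ref{sec.forwards}--\ref{sec.local2}, whereas yours inherits that machinery through the geometric input.
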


\begin{proof}
Using the relations $f_i=0$ for $i=1$, $\dots$, $27$ we
write down $9\times 9$-matrices for the action of
multiplication by $x_i$ on the claimed basis elements
for $i\in \{2,3,4,5,6,8,9\}$.
The matrices, as may be checked, commute pairwise and
obey relations corresponding to $f_1$, $\dots$, $f_{27}$.
So the evident homomorphism
from $k[s,t,x_0,\dots,x_9]/(f_1,\dots,f_{27})$
to the free $k[s,t,x_0,x_1,x_7]$-module of rank $9$ is an isomorphism.
\end{proof}

Lemma \ref{lem.flat9} implies that the ideal
$(f_1,\dots,f_{27})$ of $k[s,t,x_0,\dots,x_9]$ is prime.
Indeed, the ideal is saturated with respect to
$x_0$, $\dots$, $x_9$ (e.g., $x_0g\in (f_1,\dots,f_{27})$ implies
$g\in (f_1,\dots,f_{27})$ for $g\in k[s,t,x_0,\dots,x_9]$),
and the ideal generated by $f_1$, $\dots$, $f_{27}$ in
$k(s,t)[x_0,\dots,x_9]$ defines
an anticanonically embedded Brauer-Severi variety.
So the latter ideal, and hence also the former ideal, is prime.

\begin{prop}
\label{prop.flat9}
Let $k$ be a field of characteristic different from $3$ with primitive
cube root of unity $\zeta$,
$\pi\colon X\to \A^2\smallsetminus\{0\}$ and
$\pi'\colon X'\to \A^2\smallsetminus\{0\}$ as in
\eqref{eqn.brauerseveribundleX}--\eqref{eqn.brauerseveribundleXprime} and
$f_1$, $\dots$, $f_{27}\in k[s,t,x_0,\dots,x_9]$ as in
Table \ref{definingequations}.
The coherent sheaf $\pi_*\omega_\pi^\vee$ is free of rank $10$,
and for some basis
the scheme-theoretic image in $\A^2\times \PP^9$ is defined by
the prime ideal
$(f_1,\dots,f_{27})$ and is a nonsingular variety, flat over $\A^2$.
For an explicit basis of the free coherent sheaf $\pi'_*\omega_{\pi'}^\vee$
of rank $10$
the scheme-theoretic image in $\A^2\times \PP^9$ is defined by the
ideal obtained from $(f_1,\dots,f_{27})$ by applying
$s\mapsto s^3$ and $t\mapsto t^3$, which is also prime.
The fiber over $s=t=0$ is an irreducible, nonreduced scheme whose
reduced subscheme is a cone over a twisted cubic curve.
\end{prop}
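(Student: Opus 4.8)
The plan is to assemble Proposition~\ref{prop.flat9} from the explicit computations already in hand, treating the three claims (freeness and the prime ideal for $\pi$, the analogous statement for $\pi'$, and the fiber over the origin) in turn, with the main analytic work concentrated in the study of the central fiber.

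First I would establish the statement for $\pi'$. The bundle $\pi'$ was built as a pullback of the mildly degenerating Brauer-Severi surface bundle $\pi$, so by Proposition~\ref{prop.10sections} (and Remark~\ref{rem.10sections}) the sheaf $\pi'_*\omega^\vee_{\pi'}$ is locally free of rank $10$, and its formation commutes with base change. The explicit basis of $\mathcal{S}\cap\mathcal{T}$ computed above gives a global frame over $\A^2\smallsetminus\{0\}$; the key point is that this frame actually extends over the origin, since each listed generator is a polynomial (no negative powers of $s$ or $t$ appear). Thus the anticanonical map realizes $X'$ inside $\A^2\times\PP^9$, and I would invoke Lemma~\ref{lem.flat9} together with the paragraph following it: freeness over $k[s,t,x_0,x_1,x_7]$ shows $(f_1,\dots,f_{27})$ is prime and defines a scheme flat over $\A^2$, agreeing generically with the Brauer-Severi surface that is the generic fiber of $\pi'$. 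The substitution $s\mapsto s^3$, $t\mapsto t^3$ is then read off directly from the definition of $\pi'$ as the pullback of $\pi$ along $(s,t)\mapsto(s^3,t^3)$, and primeness of the substituted ideal follows by the same freeness argument applied after the substitution (or by noting the substitution is faithfully flat on the relevant loci).

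Next I would descend to $\pi$ itself. Here the equations in Table~\ref{definingequations} are the ones written in the variables $s$, $t$ (not their cubes), and I would verify directly via Lemma~\ref{lem.flat9} that $k[s,t,x_0,\dots,x_9]/(f_1,\dots,f_{27})$ is free of rank $9$ over $k[s,t,x_0,x_1,x_7]$ with the stated basis. This yields flatness over $\A^2$ and primeness of the ideal exactly as in the $\pi'$ case. Smoothness of the total space away from the origin follows from the construction (the forwards construction of Proposition~\ref{prop.DP9contraction} produces a regular total space over the regular locus of the base, and here the base $\A^2\smallsetminus\{0\}$ meets $D=\{st=0\}$ only in its regular locus); I would check the Jacobian criterion fails nowhere on the affine cone minus the origin-fiber using the matrix relations, or argue regularity via the explicit local models $\delta_x\delta_y\delta_z-t^3$ and $\delta_x\delta_y-t^3$ established in Section~\ref{sec.forwards}.

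The hard part, as expected, is the analysis of the fiber over $s=t=0$. Here I would set $s=t=0$ in Table~\ref{definingequations} and identify the resulting ideal in $k[x_0,\dots,x_9]$. Lemma~\ref{lem.flat9} guarantees flatness, so the fiber has the correct Hilbert polynomial; what remains is to identify it geometrically. I would show the reduced structure is the cone over the twisted cubic by checking that, after setting $s=t=0$, the surviving quadrics cut out precisely the locus where the rank-$3$ catalecticant (the $2\times 2$ minors encoding a degree-$3$ Veronese relation) vanishes—concretely, the reduction should be $\Proj$ of the Veronese image relations in the variables that survive, with the cone point recorded by the variable $x_0$. Nonreducedness and irreducibility then follow from comparing the length-$9$ fiber ring of Lemma~\ref{lem.flat9} against the reduced cone over the twisted cubic, whose coordinate ring is strictly smaller; the embedded/nonreduced structure is forced by the flatness-preserving degeneration and is exactly the cone-over-twisted-cubic fiber type appearing in Definition~\ref{defn.twodimbrauerseveri}. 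The main obstacle is carrying out this identification cleanly without grinding through all $27$ specialized quadrics, which I would circumvent by appealing to the local normal form $\delta_x\delta_y\delta_z-t^3$ from Section~\ref{sec.forwards}: setting $t=0$ there yields $\delta_x\delta_y\delta_z=0$ étale-locally, whose $\mu_3$-quotient is exactly the cone singularity, pinning down the fiber type intrinsically rather than by brute force.
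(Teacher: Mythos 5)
Your handling of freeness, primeness, and flatness via Lemma~\ref{lem.flat9} matches the paper, and your degree argument for nonreducedness of the central fiber (degree $9$ by flatness of the family versus degree $3$ for the reduced cone) is a legitimate alternative to the paper's tangent-space computation at $p_0$. But there is a genuine gap at the heart of the statement: you never prove that the variety defined by $(f_1,\dots,f_{27})$ is nonsingular at points of the fiber over $s=t=0$. Your smoothness discussion is explicitly confined to the locus ``away from the origin'' / ``minus the origin-fiber,'' where regularity indeed follows from Proposition~\ref{prop.DP9contraction}, since $D=\{st=0\}$ is regular on $\A^2\smallsetminus\{0\}$. Smoothness of the total space \emph{along} the degenerate central fiber is precisely the new content of the proposition --- the whole purpose of Section~\ref{sec.local2} is to exhibit a smooth total space over a base point where the discriminant has a node, which is the one situation the forwards construction cannot reach. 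The paper closes this step with a separate argument: the $\G_m^2$-action displayed in its proof leaves the variety invariant, so a nonempty singular locus, being closed and torus-invariant in a family proper over $\A^2$ with positive torus weights on $s$ and $t$, would have to contain a torus-fixed point of the central fiber, i.e., one of the coordinate points $p_0$, $p_1$, $p_7$; explicit Jacobian computations ($7$ equations in $7$ variables at each point) rule this out. Nothing in your proposal substitutes for this.

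A second, related flaw: your proposed shortcut for identifying the central fiber via the \'etale-local normal form $\delta_x\delta_y\delta_z-t^3$ of Section~\ref{sec.forwards} does not apply. That normal form is established only along fibers over \emph{regular} points of $D$, and setting $t=0$ in it yields a union of three components (three planes, \'etale locally) --- the mild-degeneration fiber type --- not a cone over a twisted cubic; the cone arises only over the crossing point of the two branches of $D$, which is exactly the locus outside the scope of Section~\ref{sec.forwards}. So the direct specialization you hoped to avoid is necessary, though it is short: with $s=t=0$, the equations $f_{22}$, $f_{27}$, $f_3$, $f_{16}$, $f_{25}$ force $x_6=x_9=x_3=x_5=x_8=0$ on the reduced fiber, and the surviving equations are three quadrics in $x_0$, $x_2$, $x_4$, $x_7$ cutting out a twisted cubic, with $x_1$ absent; in particular the vertex of the cone is $p_1$, not the point recorded by $x_0$ as you guessed. (A last small point: it is the generic fiber of $\pi$, not of $\pi'$, that agrees with $\Proj\big(k(s,t)[x_0,\dots,x_9]/(f_1,\dots,f_{27})\big)$; $\pi'$ corresponds to the substituted ideal, whose primeness again follows from saturation plus geometric integrality of the generic fiber, not from flatness of the cubing map alone.)
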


The explicit basis mentioned in the statement of Proposition \ref{prop.flat9}
is the one given above, as the basis of $\mathcal{S}\cap \mathcal{T}$.

\begin{proof}
As remarked after Lemma \ref{lem.flat9}, the ideal generated by
$f_1$, $\dots$, $f_{27}$ is prime, and the same holds after
applying $s\mapsto s^3$ and $t\mapsto t^3$.
These facts, plus the remark
about $\Proj(k(s,t)[x_0,\dots,x_9]/(f_1,\dots,f_{27}))$ made before
Lemma \ref{lem.flat9},
justify the statements about scheme-theoretic image.

We directly analyze the fiber over $s=t=0$.
First, the reduced subscheme is contained in the locus
$x_3=x_5=x_6=x_8=x_9=0$, as we see directly from the defining equations.
The defining equations with 
$$
s=t=x_3=x_5=x_6=x_8=x_9=0
$$ 
no longer involve
$x_1$.
There are three equations, quadratic in $x_0$, $x_2$, $x_4$, $x_7$, the
equations of a twisted cubic.
So the reduced subscheme of the fiber is a cone over a twisted cubic.
The coordinate point 
$$
p_0=(1:0:0:0:0:0:0:0:0:0)
$$ 
lies on the fiber and has
Zariski tangent space of dimension $3$.
But $p_0$ is smooth point of the reduced subscheme.
So, the fiber is nonreduced.

Now we consider the variety
$$
\Proj(k[s,t,x_0,\dots,x_9]/(f_1,\dots,f_{27})).
$$
Flatness over $\A^2$ follows from Lemma \ref{lem.flat9}.
It remains to show that this is a nonsingular variety.
For this, we let the two-dimensional torus $\G_m^2$ act on
$\A^2$ by $(\delta,\varepsilon)\cdot (s,t)=(\delta^3s,\varepsilon^3t)$ and
on $\PP^9$ by sending $(x_0:\dots:x_9)$ to
\[
(\varepsilon^3x_0:\delta\varepsilon x_1:\delta\varepsilon^2x_2:\delta\varepsilon^3x_3:
\delta^2\varepsilon x_4:\delta^2\varepsilon^2x_5:\delta^2\varepsilon^3x_6:\delta^3x_7:
\delta^3\varepsilon x_8:\delta^3\varepsilon^2x_9)
\]
and thereby obtain a torus action on the
variety defined by $f_1$, $\dots$, $f_{27}$.
Suppose the variety is singular.
Since the singular locus is closed and torus-invariant, it must
contain a fixed point, i.e., a coordinate point, of the fiber over $s=t=0$.
From the description above, we only have to consider the
coordinate points $p_0$, $p_1$, and $p_7$.
For each we specify $7$ defining equations and $7$ variables:
\[
\begin{array}{c|c|c}
\text{point}&\text{equations}&\text{variables} \\ \hline
p_0&f_1,f_2,f_3,f_4,f_5,f_6,f_{13}&s,x_4,x_5,x_6,x_7,x_8,x_9\\
p_1&f_1,f_2,f_4,f_5,f_8,f_9,f_{17}&s,t,x_3,x_5,x_6,x_8,x_9\\
p_7&f_4,f_8,f_9,f_{17},f_{18},f_{25},f_{26}&t,x_0,x_2,x_3,x_5,x_6,x_9
\end{array}
\]
By direct computation we see that
the corresponding Jacobian matrix at each $p_i$ is nonsingular, and thus
$p_i$ is a nonsingular point of the variety defined by
$f_1$, $\dots$, $f_{27}$.
\end{proof}

The proof of Theorem~\ref{thm:P2bundle} proceeds as in the case of conic bundles \cite{oesinghaus}:
\begin{itemize}
\item Represent the generic fiber of $\pi$ by a
$3$-torsion Brauer group element $\alpha\in \Br(k(S))$. 
Apply embedded resolution of singularities
so that $\alpha$ is ramified along a
simple normal crossing divisor $D_1\cup\dots\cup D_\ell$. 
Let $\mathcal{R}:=\sqrt[3]{(S,\{D_1,\dots,D_\ell\})}$ be the iterated root stack; then
by Remark \ref{rem.ramification}, $\alpha$ is the restriction of an element
\[ \beta\in \Br(\mathcal{R}). \]
Pass to a $\mu_3$-gerbe $\mathcal{G}\to \mathcal{R}$
whose class is a lift $\beta_0\in H^2(\mathcal{R},\mu_3)$ of
$\beta$.
The restriction of $\pi$ to $\pi^{-1}(U)$ is a smooth
$\PP^2$-bundle by \cite[Lem.\ 1.7]{hironakasmoothing}.
So, there is a corresponding sheaf of
Azumaya algebras over $U$ and a locally free sheaf of rank $3$ over
$\mathcal{G}\times_{\mathcal{R}}U$.
This spreads out to a coherent sheaf $\mathcal{E}$ on $\mathcal{G}$, which
we may take to be reflexive, hence
\cite{hartshornereflexive} only failing to be locally free, if at all, on a locus
of codimension $\ge 3$.
\item By a standard construction (closure in Grassmannian of rank $3$
quotients of $\mathcal{E}$), followed by desingularization and
destackification \cite{bergh}, we may suppose  that $\mathcal{E}$ is locally free
and $\mathcal{R}$ is an iterated root stack
$\sqrt[3]{(\widetilde{S},\{\widetilde{D}_1,\dots,\widetilde{D}_m\})}$ over a
smooth projective variety $\widetilde{S}$.
We may suppose, for every $i$, that over a general point of $\widetilde{D}_i$
the stabilizer action on the fiber of $\PP(\mathcal{E})$ is balanced,
since otherwise it may be made trivial with
suitable elementary transformations.
\item After blowing up intersections of pairs and triples of divisors,
we may arrange that no triple of divisors intersects, and for every
pair of intersecting divisors, the corresponding
projective representation $\mu_3\times \mu_3\to PGL_3$ over a
general point of the intersection is faithful.
The group-theoretic input to these assertions is the fact
that no more than two independent commuting copies of $\mu_3$ may be
found in $PGL_3$.
\item Apply Proposition \ref{prop.DP9contraction} to the
complement of the intersections of pairs of divisors, and
Propositions \ref{prop.10sections} and \ref{prop.flat9} and
Lemma \ref{lem.liftisohenselian} to fill in the fibers over
intersections of divisors.
\end{itemize}

\section{Application to rationality}
\label{sec.rationality}

Here we prove Theorem \ref{dp2thm}.
We use a theorem of Voisin \cite[Thm.\ 2.1]{voisin}, in the form of \cite[Thm.\ 2.3]{CTP}:

\begin{theo}
\label{thm:voisin}
Let $k$ be an uncountable algebraically closed field, $B$ a variety over $k$, and
$\phi\colon \mathcal{X}\to B$ a
flat projective morphism with integral fibers and smooth generic fiber. 
Suppose that there exists a $b_0\in B$ such that the fiber $X_0:=\phi^{-1}(b_0)$ satisfies:
\begin{itemize}
\item[(O)] for some positive integer $n$, invertible in $k$,
there is nontrivial unramified cohomology of $\mu_n$ on $X_0$;
\item[(R)] $X_0$ admits a universally $\mathrm{CH}_0$-trivial resolution of singularities. 
\end{itemize}
Then, for very general $b\in B$, the fiber $X_b:=\phi^{-1}(b)$ 
is not stably rational. 
\end{theo}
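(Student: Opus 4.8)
The plan is to follow the decomposition-of-the-diagonal method of Voisin, in the Chow-theoretic refinement of Colliot-Th\'el\`ene--Pirutka. The governing notion is \emph{universal $\mathrm{CH}_0$-triviality}: a projective variety $Y$ over a field $F$ is universally $\mathrm{CH}_0$-trivial if for every field extension $F'/F$ the degree map $\mathrm{CH}_0(Y_{F'})\to \Z$ is an isomorphism. I would first record the two standard facts that make this notion useful. First, a smooth projective \emph{stably rational} variety is universally $\mathrm{CH}_0$-trivial, since it is retract rational and retract rationality of a smooth projective variety implies an integral decomposition of the diagonal. Second, a smooth projective universally $\mathrm{CH}_0$-trivial variety has vanishing unramified cohomology $H^i_{\mathrm{nr}}(\,\cdot\,,\mu_n^{\otimes j})$ in every positive degree $i$. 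Both follow from the equivalence between universal $\mathrm{CH}_0$-triviality and the existence of an integral \emph{decomposition of the diagonal} $\Delta_Y=[Y\times y_0]+Z$ in $\mathrm{CH}_{\dim Y}(Y\times Y)$ with $Z$ supported on $D\times Y$ for a proper closed $D\subset Y$; one lets this correspondence act on zero-cycles, respectively on cohomology.

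Next I would extract the consequence of the two hypotheses at the special point. Since unramified cohomology of the function field is a birational invariant of smooth proper models, the smooth resolution $\widetilde{X}_0$ furnished by (R) has the same unramified cohomology as $X_0$; by (O) this is nontrivial. Hence, by the second fact above, $\widetilde{X}_0$ is \textbf{not} universally $\mathrm{CH}_0$-trivial. This nonvanishing is the statement with which the rest of the argument must collide.

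The heart of the matter is the specialization step, which I would carry out over a discrete valuation ring $A$ with fraction field $K$ and residue field $k$, applied to a flat proper family $\mathcal{X}_A\to \Spec(A)$ with smooth generic fiber $X_K$ and special fiber $X_0$. The claim is: if the geometric generic fiber $X_{\overline{K}}$ is universally $\mathrm{CH}_0$-trivial, then so is $\widetilde{X}_0$. Granting this, the previous paragraph gives a contradiction, so $X_{\overline{K}}$ is not universally $\mathrm{CH}_0$-trivial and in particular not stably rational. To prove the claim I would take the diagonal decomposition of $X_{\overline{K}}$, spread the occurring cycles out over $A$ by taking closures, and specialize them using Fulton's specialization homomorphism for Chow groups; this produces a decomposition of the diagonal of the singular fiber $X_0$. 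The universally $\mathrm{CH}_0$-trivial resolution of (R) gives an isomorphism $\mathrm{CH}_0((X_0)_{F'})\cong \mathrm{CH}_0((\widetilde{X}_0)_{F'})$ for every $F'$, and this is exactly what is needed to transport the specialized decomposition to the smooth model $\widetilde{X}_0$, yielding its universal $\mathrm{CH}_0$-triviality. \textbf{This transport is the main obstacle}: naive specialization lands on the \emph{singular} fiber $X_0$, where a decomposition of the diagonal carries little information, and it is precisely hypothesis (R) that lets one move it to a smooth model without loss. The compatibility of the specialization map with the universally $\mathrm{CH}_0$-trivial resolution is the delicate technical point, and it is the reason (R) is imposed.

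Finally I would deduce the statement for a very general $b\in B$ from the above DVR statement, using that $k$ is uncountable. A decomposition of the diagonal on a fiber $X_b$ is witnessed by finitely many cycles, which live in a countable disjoint union of relative Chow (or Hilbert) schemes over $B$; consequently the locus of $b$ for which $X_b$ is universally $\mathrm{CH}_0$-trivial is contained in a countable union of closed subsets of $B$. If the very general fiber were stably rational, this locus would contain a very general point, forcing one of those closed subsets to be all of $B$; then the decomposition of the diagonal would hold on a dense open $U\subset B$, hence for the generic fiber of the restricted family. Slicing $B$ by a general curve $C$ through $b_0$ that meets $U$ and passing to the discrete valuation ring $A=\cO_{C,b_0}$, I obtain a family whose geometric generic fiber is universally $\mathrm{CH}_0$-trivial and whose special fiber is $X_0$. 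The specialization step then contradicts the nonvanishing established from (O) and (R). Therefore the very general fiber is not stably rational.
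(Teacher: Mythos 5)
Your argument is correct in substance, and its core ingredients---the equivalence between a Chow decomposition of the diagonal and the condition on $\delta_X$ in $\mathrm{CH}_0(X_{\ell(X)})$, the transfer of that condition between $X_0$ and $\widetilde{X}_0$ via (R), the obstruction from (O) on the smooth model, and the Chow-scheme countability argument over the uncountable field---all appear in the paper's treatment. But the route differs in one structural respect: the paper never performs a DVR specialization. You follow the two-step scheme of Colliot-Th\'el\`ene--Pirutka: first a specialization theorem over a discrete valuation ring (spread out the decomposition of the geometric generic fiber, apply Fulton's specialization homomorphism to land on the singular fiber, transport to $\widetilde{X}_0$ by (R)), then a reduction of the very-general statement to this by slicing $B$ with a curve through $b_0$. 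The paper instead uses the countability statement in its sharp form, as in \cite{CTP} and \cite{HKTconic}: the set of $k$-points whose fibers admit a Chow decomposition \emph{equals} a countable union of closed subsets, each consisting entirely of such points; since (O) and (R) show that $X_0$ admits no Chow decomposition, $b_0$ lies in none of these subsets, hence each is proper, and a very general $b$ (taken also in the open locus of smooth fibers) has $X_b$ without decomposition, hence not stably rational. All specialization is thus absorbed into the properness (closedness of images) of the Chow schemes/functors, which is also how positive characteristic is handled (Chow functors of \cite{HKTconic}; your ``Chow or Hilbert schemes'' plays the same role). Your detour carries a small cost that you gloss over: when some closed subset is all of $B$, the parameter-scheme argument a priori yields decompositions only at $k$-points, and to feed your DVR theorem you must produce one over a finite extension of the residue field at the generic point of your curve $C$---e.g.\ by lifting $\eta_C$ through the proper surjection from the parameter scheme onto $B$, or by one further specialization from $k(B)$; the phrase ``the decomposition holds on a dense open $U$, hence for the generic fiber of the restricted family'' elides this point, though it is standard to repair. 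What your version buys in exchange is the DVR specialization theorem itself, a statement of independent interest that applies to specific, not merely very general, degenerations.
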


The reader may turn to \cite{CTP} for the definitions of
\emph{universally $\mathrm{CH}_0$-trivial scheme}
and \emph{universally $\mathrm{CH}_0$-trivial morphism}.
For a proper geometrically integral scheme $X$ over a field $\ell$, say of
dimension $d$,
a related notion (equivalent, when $X$ is smooth \cite[Prop.\ 1.4]{CTP}),
is for $X$ to admit a \emph{Chow decomposition of the diagonal}.
This means that in the Chow group $\mathrm{CH}_d(X\times X)$ of
$d$-dimensional cycles modulo rational equivalence,
the class $[\Delta_X]$ of the diagonal of $X$ is equal to a sum
\[ Z+[X]\times z_0 \]
for some $d$-dimensional cycle $Z$ supported on $D\times X$, for a closed subset
$D\subsetneq X$, and $0$-cycle $z_0$ of degree $1$ on $X$.
If we let
$\delta_X\in \mathrm{CH}_0(X_{\ell(X)})$ denote the restriction of $\Delta_X$ to
$X_{\ell(X)}:=\Spec(\ell(X))\times_{\Spec(\ell)} X$, this is equivalent to:
\[
{(*)}\qquad \text{$X$ has a $0$-cycle $z_0$ of degree $1$ and
$[\delta_X]=z_0$ in $\mathrm{CH}_0(X_{\ell(X)})$}.
\]
Condition (R), with $\mathrm{CH}_0$-trivial resolution of singularities
$\widetilde{X}_0\to X_0$, implies the equivalence of $(*)$ on
$X_0$ and $\widetilde{X}_0$, hence the equivalence of
admitting a Chow decomposition of the diagonal.
Chow decomposition of the diagonal is
obstructed by Condition (O) but implied by stable rationality,
while an argument with Chow schemes \cite[App.\ B]{CTP}
(Chow functors in case $\chara(k)>0$ \cite[\S 3]{HKTconic})
shows that the $k$-points whose fibers admit
a Chow decomposition of the diagonal occupy a countable union of
closed subsets of $B$.
An alternative approach, bypassing Chow groups,
appears in recent work \cite{nicaiseshinder}, \cite{kontsevichtschinkel}.

We return to Theorem \ref{dp2thm}, with $\chara(k)\ne 2$, $3$, and
smooth degree $2$ del Pezzo surface $S$.

We first treat the case $d=3$ and then explain the modifications to
deal with general $d$.
The anticanonical morphism expresses $S$ as
double cover of $\PP^2$ branched along a nonsingular quartic curve $R$,
which we take to have
defining equation $h$ with respect to some coordinates.

\medskip

{\em Step 1.} There exists a nonsingular cubic curve $D_0$ meeting $R$ in six tangencies
lying on a conic.
Among the $28$ bitangent lines to $R$, at most $12$ are
hyperflexes, since the weighted count of Weierstrass points of $R$ is $24$
and each hyperflex contributes $2$; cf.\ \cite{SV}.
Bitangent lines are identified with odd theta characteristics,
and it is known classically (see, e.g., \cite[\S 2]{grossharris}) that
any set of $8$ bitangent lines, which we may suppose disjoint from the set of hyperflexes,
contains a syzygetic triple $\ell$, $\ell'$, $\ell''$:
the six points of intersection with $R$ lie on a conic $f=0$.
Using the same symbols for defining linear forms and suitably rescaling $h$,
we obtain by Max Noether's theorem a relation
$h=f^2+\ell\ell'\ell''\ell'''$ for a fourth bitangent $\ell'''$.
Let $j$ be a line in general position with respect to
$\ell$, $\ell'$, $\ell''$ with suitably scaled defining equation.
Then we easily see that the cubic $g:=\ell\ell'\ell''-2fj-j^2\ell'''$,
appearing in the relation $h=(f+j\ell''')^2+g\ell'''$,
is nonsingular and meets $R$ in six tangencies.

\medskip

{\em Step 2.} Let $\widetilde{D}_0\to D_0$ be a nontrivial cyclic degree $3$ \'etale cover,
determining (Remark \ref{rem.purity}) a $3$-torsion element of $\Br(\PP^2\smallsetminus D_0)$,
which extends (Remark~\ref{rem.ramification}) to an element of $\Br(\sqrt[3]{(\PP^2,D_0)})$,
represented (\cite{artin}, \cite{dejong}) by an index $3$ division algebra at the generic point. 
As in the proof of Theorem~\ref{thm:P2bundle}, it
spreads out to a sheaf of Azumaya algebras over $\sqrt[3]{(\PP^2,D_0)}$.
We obtain an element $\gamma_0\in H^2(\sqrt[3]{(\PP^2,D_0)},\mu_3)$,
gerbe $G_0\to \sqrt[3]{(\PP^2,D_0)}$ with this class, and
locally free coherent sheaf $E_0$ of rank $3$ on $G_0$ such that the
pullback of the sheaf of Azumaya algebras is $\End(E_0)$.
We perform base change via $S\to \PP^2$ to obtain \emph{reducible} curve $D=D_1\cup D_2$ with
\'etale degree $3$ cover $\widetilde{D}\to D$,
element $\gamma\in H^2(\sqrt[3]{(S,D)},\mu_3)$,
gerbe $G\to \sqrt[3]{(S,D)}$, and locally free coherent sheaf $E$ on $G$.

\medskip

{\em Step 3.} Apply the deformation-theoretic machinery of \cite[\S 4.3]{HKTconic} to exhibit:
\begin{itemize}
\item an exact sequence $0\to \widetilde{E}\to E\to Q\to 0$, where $Q$ is supported on a divisor
and is locally free there of rank $2$, such that the
space of obstructions for $\widetilde{E}$ vanishes.
\item an \'etale neighborhood $B$ of the point $b_0$, corresponding to divisor $D$ and
cyclic cover $\widetilde{D}$, in the space of reduced nodal curves on $S$ with cyclic degree $3$
\'etale cover, corresponding divisor $\mathcal{D}\subset B\times S$,
root stack $\sqrt[3]{(B\times S,\mathcal{D})}$,
class $\Gamma\in H^2(\sqrt[3]{(B\times S,\mathcal{D})},\mu_3)$ restricting to $\gamma$,
gerbe $\mathcal{G}\to \sqrt[3]{(B\times S,\mathcal{D})}$ restricting to $G$,
locally free coherent sheaf $\widetilde{\mathcal{E}}$ restricting to $\widetilde{E}$.
\item a smooth $\PP^2$-bundle $P\to \sqrt[3]{(B\times S,\mathcal{D})}$,
which upon base change to $\mathcal{G}$ yields $\PP(\widetilde{\mathcal{E}})$.
\end{itemize}

\medskip

{\em Step 4.} Apply Proposition \ref{prop.DP9contraction} to
$P$
to obtain a Brauer-Severi bundle $\pi\colon \mathcal{X}\to B\times S$
satisfying the hypotheses of Theorem \ref{thm:voisin}.
Since, by construction, the discriminant curve of the
Brauer-Severi bundle $X_0\to \{b_0\}\times S$
has two components and the
Brauer class is given by cyclic covers, nontrivial on each component
and \'etale over the nodes, condition (O) is satisfied.
Condition (R) holds, since the singularities of $X_0$
(which lie over the nodes of the discriminant curve) are of toric type.

\medskip

The case $d>3$ is dealt with by taking $D_3$ to be the pre-image in $S$
of a general curve $C\subset \PP^2$ of degree $d-3$ and defining
$D'_0:=D_0\cup C$.
We pull back $\gamma_0\in H^2(\sqrt[3]{(\PP^2,D_0)},\mu_3)$ and the gerbe $G_0$ to
obtain $\gamma'_0\in H^2(\sqrt[3]{(\PP^2,D'_0)}^{\mathrm{sm}},\mu_3)$ and
a gerbe $G'_0$.
The pull-back of the locally free coherent sheaf $E_0$ has
trivial $\mu_3$-stabilizer actions over points of $C$.
By performing suitable elementary transformations over the
component of the gerbe of the root stack over $C$,
we may obtain balanced actions, compatible with
those over $D_0$.
Then the smooth $\PP^2$-bundle corresponding to the locally free
sheaf has the same \'etale local isomorphism type as that
corresponding to $E$ (Step 2).
We thereby obtain an extension of $\gamma'_0$ to
$\gamma''_0\in H^2(\sqrt[3]{(\PP^2,D'_0)},\mu_3)$, of
$G'_0$ to $G''_0\to \sqrt[3]{(\PP^2,D'_0)}$, and of
$E$ to a locally free coherent sheaf on $G''_0$.
We proceed with Steps 3 and 4, taking $D$ to be the union of $D_1$, $D_2$, and
the pull-back $D_3$ of $C$;
the triviality of the cyclic cover over $D_3$ poses no problems in carrying
out these steps.

\appendix
\section{Birational contractions}
\label{app.birat}
Let $k$ be a field and $X$ a scheme, proper over $k$.
A line bundle $L$ on $X$ is called \emph{semiample}
if $L^m$ is globally generated for some $m>0$.
Then the set $M(X,L)$ of $m\ge 0$, such that $L^m$ is
globally generated, consists, for $m\gg 0$, of precisely the multiples
of a well-defined positive integer, the
\emph{exponent} of $M(X,L)$; cf.\ \cite[\S 2.1.B]{lazarsfeldI}.
We start by recalling the contraction determined by a semiample line bundle.

\begin{prop}
\label{prop.semiampleoverfield}
Let $X$ be a scheme, proper over a field $k$, with semiample line bundle $L$
and sections $s_0$, $\dots$, $s_M\in H^0(X,L^m)$ generating $L^m$ for some $m>0$,
thereby inducing a morphism $\psi\colon X\to \PP^M$.
Let $Y=\psi(X)$, and write the Stein factorization of $\psi$ as morphism
\[ \pi\colon X\to X' \]
followed by finite morphism $f\colon X'=\Spec(\psi_*\cO_X)\to Y$.
We let $\cO_Y(1)$, respectively $\cO_{X'}(1)$ denote the restriction to $Y$,
respectively the pull-back to $X'$ of the line bundle
$\cO_{\PP^M}(1)$, so $\pi^*\cO_{X'}(1)\cong L^m$.
Then:
\begin{itemize}
\item[(i)] The morphism $\pi\colon X\to X'$ is proper and
induces an isomorphism $\cO_{X'}\eqto\pi_*\cO_X$.
\item[(ii)] For all $n\gg 0$ the line bundle $\cO_{X'}(n)$ is very ample,
and for any positive integer $n$ such that $\cO_{X'}(n)$ is very ample,
the morphism $X\to \PP^N$ determined by a basis of $H^0(X,L^{mn})$
factors uniquely through $X'$, identifying $X'$ with the
image of $X$ in $\PP^N$.
\item[(iii)] With the property in \emph{(ii)} the scheme $X'$, with
$\pi\colon X\to X'$, is determined uniquely up to canonical isomorphism,
independently of the choice of $m$ and the generating sections, and the powers of $L$ that
are isomorphic to the pull-back of a line bundle from $X'$ are precisely all 
multiples of the exponent of $M(X,L)$.
\end{itemize}
\end{prop}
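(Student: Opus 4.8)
The plan is to obtain (i) by identifying $\pi$ with the Stein factorization of $\psi$, to derive (ii) from standard properties of ample and very ample sheaves together with the projection formula, and to place the substantive work in (iii), where the exponent governs descent and the independence of $X'$ from the chosen data. First I would note that $\psi$ is proper, since $X$ is proper over $k$ and $\PP^M$ is separated over $k$; hence $\psi_*\cO_X$ is coherent and $X'=\Spec(\psi_*\cO_X)$ is finite over $Y$. Part (i) then follows from the standard description of the Stein factorization \cite[4.3]{EGAIII}: the isomorphism $\cO_{X'}\eqto\pi_*\cO_X$ holds by construction of the relative spectrum, and $\pi$ is proper by cancellation, using that $f$ is finite, hence separated, while $\psi=f\circ\pi$ is proper.

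For (ii), the sheaf $\cO_{X'}(1)=f^*\cO_Y(1)$ is the pull-back under the finite morphism $f$ of the ample sheaf $\cO_Y(1)$ (the restriction of $\cO_{\PP^M}(1)$ to the closed subscheme $Y$) and is therefore ample; being proper over $k$ with an ample sheaf, $X'$ is projective over $k$ and $\cO_{X'}(n)$ is very ample for $n\gg0$. For the factorization assertion I would combine $L^{mn}\cong\pi^*\cO_{X'}(n)$ with $\pi_*\cO_X\cong\cO_{X'}$ and the projection formula to identify $H^0(X,L^{mn})$ with $H^0(X',\cO_{X'}(n))$; when $\cO_{X'}(n)$ is very ample, a basis embeds $X'$ into $\PP^N$ and the induced morphism out of $X$ is this embedding precomposed with $\pi$. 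Since $\pi$ is surjective and schematically dominant (as $\cO_{X'}\eqto\pi_*\cO_X$), the factorization through $X'$ is unique and its image is $X'$.

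The core is (iii). The key geometric observation is that a globally generated power $L^a$ ($a\ge1$) is constant on an integral complete curve $C\subset X$ precisely when $\deg(L|_C)=0$: if $\deg(L|_C)>0$ then $L^a|_C$ is globally generated of positive degree, hence yields a nonconstant map, while if $\deg(L|_C)=0$ then $L^a|_C$ is globally generated of degree $0$, hence trivial. Thus the locus contracted by $\psi$ — and by the analogous map $\psi'\colon X\to\PP^{M'}$ attached to any other admissible pair $(m',\text{sections})$ — is the same, namely the curves with $\deg(L|_C)=0$; equivalently, $L$ is numerically trivial on every fiber of $\pi$, so the globally generated $L^{m'}$ is trivial there and $\psi'$ is constant on the fibers of $\pi$. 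By the rigidity property of the Stein factorization (proper $\pi$ with $\pi_*\cO_X\cong\cO_{X'}$) it then factors as $\psi'=h\circ\pi$ with $h\colon X'\to\PP^{M'}$; since $(h\circ\pi)_*\cO_X\cong h_*\cO_{X'}$, the target $X''$ of the second construction is the Stein factorization of $h$, yielding a morphism $X'\to X''$ compatible with the maps from $X$, and symmetrically $X''\to X'$, which are mutually inverse by schematic dominance. This gives the canonical isomorphism independent of $m$ and the sections. I would also record that any admissible $m$ is a multiple of the exponent $e$ of $M(X,L)$: since $M(X,L)$ is an additive subsemigroup of $\N$ whose large elements are exactly the multiples of $e$ \cite[\S2.1.B]{lazarsfeldI}, adding a large multiple of $e$ to any $a\in M(X,L)$ with $e\nmid a$ would produce a large element not divisible by $e$, a contradiction.

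It remains to descend $L^e$ and characterize which powers descend. For $k\gg0$ both $ke$ and $(k+1)e$ lie in $M(X,L)$ and, by the previous paragraph, the corresponding globally generated powers descend along our fixed $\pi$ to ample sheaves $A_{ke}$, $A_{(k+1)e}$ on $X'$; setting $A:=A_{(k+1)e}\otimes A_{ke}^{-1}$ gives $\pi^*A\cong L^e$. Because $\pi_*\cO_X\cong\cO_{X'}$, the pullback $\pi^*\colon\Pic(X')\to\Pic(X)$ is injective (if $\pi^*N\cong\cO_X$ then $N\cong\pi_*\pi^*N\cong\cO_{X'}$), so from $\pi^*(A^k)\cong L^{ke}\cong\pi^*A_{ke}$ we get $A^k\cong A_{ke}$, whence $A$ is ample. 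Now if $e\mid d$ then $L^d\cong\pi^*(A^{d/e})$ descends; conversely, if $L^d\cong\pi^*N$, then ampleness of $A$ makes $N\otimes A^j$ globally generated for $j\gg0$, so $L^{d+je}\cong\pi^*(N\otimes A^j)$ is globally generated, forcing $d+je\in M(X,L)$ and hence $e\mid d$. Finally, the intrinsic identity $X'\cong\Proj\bigl(\bigoplus_{k\ge0}H^0(X,L^{ke})\bigr)$, obtained from $H^0(X',A^k)\cong H^0(X,L^{ke})$ and the ampleness of $A$, exhibits $X'$ as depending only on $(X,L)$. The main obstacle, requiring the most care, is precisely this part (iii): verifying that the contracted locus — equivalently the Stein factorization — is independent of the power through the degree computation and the rigidity lemma, and then descending $L^e$ to an ample sheaf using the injectivity of $\pi^*$ on Picard groups and the bookkeeping of the exponent.
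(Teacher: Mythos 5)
Your parts (i) and (ii) coincide with the paper's own argument: Stein factorization for (i), and for (ii) ampleness of the pull-back of $\cO_Y(1)$ under the finite morphism $f$ together with the identification $H^0(X,L^{mn})\cong H^0(X',\cO_{X'}(n))$ coming from $\cO_{X'}\cong\pi_*\cO_X$. For (iii), however, you take a genuinely different route, and it works. The paper compares the two candidate contractions by embedding both into the same projective space: for suitable $n$ each is identified with the image of $X$ under the morphism given by a basis of $H^0(X,L^{mn})$, and the resulting isomorphism is shown to be independent of $n$ via the compatibility of Veronese embeddings with $\Sym^d H^0(X,L^{mn})\to H^0(X,L^{dmn})$, different powers $m$, $\tilde m$ being handled by proportional choices of $n$, $\tilde n$. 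You instead characterize the contracted locus numerically (an integral complete curve is collapsed if and only if $\deg(L|_C)=0$, since a globally generated degree-zero line bundle on such a curve is trivial), deduce that the second map $\psi'$ collapses every fiber of $\pi$, and invoke the rigidity lemma for a proper $\pi$ with $\pi_*\cO_X\cong\cO_{X'}$ to produce direct comparison morphisms $X'\to X''$ and $X''\to X'$, mutually inverse by schematic dominance; this replaces the paper's Veronese bookkeeping but imports two standard facts you leave unproved (the rigidity lemma, and that a positive-dimensional image of a proper connected scheme contains a curve dominated by a curve upstairs). For the descent statement, both proofs use the same trick of taking two consecutive large multiples of the exponent to manufacture $A$ on $X'$ with $\pi^*A\cong L^e$; but your converse direction is actually tighter than the paper's: the paper asserts that any $L'$ with $\pi^*L'\cong L^e$ is ample (which needs something like Nakai--Moishezon, not spelled out there), whereas you only use injectivity of $\pi^*$ on $\Pic$ (immediate from the projection formula) to get $A^k\cong A_{ke}$ ample, and then global generation of $N\otimes A^j$ for $j\gg 0$, which is elementary. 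In short: the paper's route stays entirely within the EGA linear-systems language already cited in its appendix; yours trades that for a cleaner, more geometric uniqueness argument and a more elementary treatment of which powers of $L$ descend.
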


\begin{proof}
We have stated, in (i), a basic property of
the Stein factorization; see \cite[4.3.1]{EGAIII}.
The first statement in (ii) follows from standard facts on very ample line bundles
\cite[4.4.10(ii), 5.1.6]{EGAII}, and for the second statement,
using the isomorphism $\cO_{X'}\cong \pi_*\cO_X$ we have
\begin{equation}
\label{eqn.globalsectionsiso}
H^0(X,L^{mn})\cong H^0(X',\cO_{X'}(n)),
\end{equation}
so for $n$ such that $\cO_{X'}(n)$ is very ample, a choice of basis
compatibly maps $X$ to $\PP^N$ and embeds $X'$ in $\PP^N$.
The factorization is unique, and by (i), identifies $X'$ with the
image of $X\to \PP^N$.
For (iii), if $\tilde s_0$, $\dots$, $\tilde s_{\widetilde{M}}$ is another collection
of generating sections of $L^m$, leading to $\tilde\pi\colon X\to \widetilde{X}'$,
then for $n$ such that $\cO_{X'}(n)$ and $\cO_{\widetilde{X}'}(n)$ are very ample,
$X'$ and $\widetilde{X}'$ are both identified with the image of $X\to \PP^N$, as in (ii).
The isomorphism $X'\cong \widetilde{X}'$ obtained this way is independent of $n$:
it suffices to compare the isomorphisms arising from $n$, leading to $X\to \PP^N$,
and $dn$, for some $d>0$, leading to $X\to \PP^{N'}$, but these are compatible with
the $d$-tuple Veronese embedding of $\PP^N$ and the rational map from $\PP^{N'}$
corresponding to 
$$
\Sym^d(H^0(X,L^{mn}))\to H^0(X,L^{dmn}).
$$
When $\pi\colon X\to X'$ and $\tilde\pi\colon X\to \widetilde{X}'$ arise
from different powers of $L$ the argument is similar, with pairs
$n$ and $\tilde n$ of positive integers chosen proportionally to yield the same
power of $L$ in the respective isomorphisms \eqref{eqn.globalsectionsiso}
with $H^0(X',\cO_{X'}(n))$ and $H^0(\widetilde{X}',\cO_{\widetilde{X}'}(\tilde n))$.
Taking, as powers of $L$, two sufficiently large
consecutive multiples of the exponent of $M(X,L)$ yields respective line bundles
$\cO_{X'}(1)$, such that the pull-back of their difference is isomorphic to
$L^d$, where $d$ denotes the exponent of $M(X,L)$.
If a line bundle $L'$ on $X'$ satisfies $\pi^*L'\cong L^e$ for some $e>0$ then
$L'$ is ample, so all sufficiently large powers of $L'$ are globally generated and thus
$e$ is divisible by $d$.
\end{proof}

A morphism satisfying the properties stated in (i) is called a \emph{contraction}
and is necessarily surjective
with geometrically connected fibers \cite[4.3.2, 4.3.4]{EGAIII}.
We call this the \emph{contraction determined by $L$}.

Below we present a result which encompasses the following examples.

\begin{exam}
\label{exa.semiampleoverfield}
We suppose $X$ is smooth and projective over $k$ of
dimension $n$, with ample line bundle $L_0$, and effective divisor
$E\subset X$.
\begin{itemize}
\item[(i)]
If $E\cong \PP^{n-1}$ with normal bundle $N_{E/X}\cong \cO_{\PP^{n-1}}(-1)$,
and $m$ denotes the positive integer with $L_0|_E\cong \cO_{\PP^{n-1}}(m)$,
then $L:=L_0(mE)$ is semiample and determines a contraction
\[ X\cong B\ell_{\{p\}}X'\to X' \]
of $E$ to a $k$-point $p$ on a nonsingular projective variety $X'$.
\item[(ii)]
If $n=3$ and $E\cong \PP^1\times \PP^1$, with
$N_{E/X}\cong \cO_{\PP^1\times \PP^1}(-1,-1)$
and $L_0|_E\cong \cO_{\PP^1\times \PP^1}(m_1,m_2)$, then
$L:=L_0(\min(m_1,m_2)E)$ is semiample and determines a contraction
\[ X\cong B\ell_{C}X'\to X' \]
for a curve $C\cong \PP^1$ on nonsingular $X'$ with
$N_{C/X'}\cong \cO_{\PP^1}(-1)^2$, when $m_1\ne m_2$, respectively
\[ X\cong B\ell_{\{p\}}X'\to X' \]
for a $k$-point $p\in X'$ with an ordinary double point singularity,
when $m_1=m_2$.
\end{itemize}
\end{exam}

When $\dim(X)=2$, Example \ref{exa.semiampleoverfield}(i) is
the contraction theorem of Castelnuovo and Enriques and leads
to minimal models of algebraic surfaces.
The case of dimension at least $3$ requires the sophistication of the
minimal model program (see \cite{KMM} for a survey), including the
observation that a curve class (modulo numerical equivalence), which meets
a canonical divisor positively and is
extremal in the cone of effective classes, uniquely determines a contraction.
Mori, in dimension $3$, identifies
five kinds of divisorial contractions \cite[Thm.\ 3.3]{morithreefolds},
including the ones given in
Example \ref{exa.semiampleoverfield}; we see already the possibility
of contraction to a singular point, which is essential for the theory.
The results below
on divisorial contractions to a point
(Proposition \ref{prop.semiampleoverfieldtopoint}) and to a general subscheme
(Proposition \ref{prop.semiampleoverfieldtoscheme}) are essentially
contained in \cite[Lem.\ 3.32]{morithreefolds}; a relative version is
presented in Proposition \ref{prop.semiampleoverStoscheme}.

An important ingredient,
already appearing in loc.\ cit., is the following notion \cite{mumfordquadratic}:
an ample line bundle $L$ on a projective scheme $X$
is \emph{normally generated} if
$$
\Sym^nH^0(X,L)\to H^0(X,L^n)
$$ 
is surjective for every positive integer $n$.

\begin{prop}
\label{prop.semiampleoverfieldtopoint}
Let $X$ be a projective scheme
over $k$ with ample line bundle $L_0$
and effective Cartier divisor $E\subset X$ with $H^0(E,\cO_E)=k$ (this is
the case, e.g., when geometrically $E$ is connected and reduced).
Suppose that $N_{E/X}^\vee$ is
ample, with $L_0|_E\otimes (N_{E/X})^m$ a torsion line bundle for some $m>0$.
Assume, furthermore, that $N_{E/X}^\vee$ is normally generated, and
$H^1(E,(N_{E/X}^\vee)^n)=0$ for all $n>0$.
Then $L:=L_0(mE)$ is semiample with exponent of $M(X,L)$ equal to the order
of $L_0|_E\otimes (N_{E/X})^m$ in $\Pic(E)$ and determines a contraction
$X\cong B\ell_{\{p\}}Y\to Y$
of $E$ to a $k$-point $p\in Y$ such that the local ring $\cO_{Y,p}$ has
associated graded $k$-algebra isomorphic to $\bigoplus H^0(E,(N_{E/X}^\vee)^n)$.
\end{prop}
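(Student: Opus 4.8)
The plan is to deduce the contraction from Proposition \ref{prop.semiampleoverfield} once $L$ is shown to be semiample, and then to read off the local ring at the image point from the higher direct images of the sheaves $\cO_X(-jE)$. Since $E$ is Cartier we have $\cO_X(E)|_E\cong N_{E/X}$, so $L|_E\cong L_0|_E\otimes(N_{E/X})^m$ is torsion; let $r$ be its order, so that $L^{rk}|_E\cong\cO_E$ and, more generally, $L^{rk}(-jE)|_E\cong(N_{E/X}^\vee)^j$ for all $k,j\ge0$. The first and central step is to prove $H^1(X,L^{rk}(-jE))=0$ for $k\gg0$ and $1\le j\le mrk$. I would argue by downward induction on $j$: the base case $j=mrk$ is $H^1(X,L_0^{rk})=0$, which holds for $k\gg0$ by Serre vanishing since $L_0$ is ample; the inductive step uses the sequence $0\to L^{rk}(-(j+1)E)\to L^{rk}(-jE)\to(N_{E/X}^\vee)^j\to0$, whose boundary term $H^1(E,(N_{E/X}^\vee)^j)$ vanishes for $j\ge1$ by hypothesis, so that $H^1$ of the subsheaf surjects onto $H^1$ of the quotient and the vanishing propagates from $j+1$ to $j$.

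Granting this, $H^1(X,L^{rk}(-E))=0$, so the nowhere-zero constant section of $H^0(E,L^{rk}|_E)=H^0(E,\cO_E)=k$ lifts to a section $s_0$ of $L^{rk}$ that is nonvanishing along $E$. Away from $E$ the canonical section of $\cO_X(mrk\,E)$ trivializes, identifying $L^{rk}$ with $L_0^{rk}$, which is very ample for $k\gg0$; together with $s_0$ this shows $L^{rk}$ is globally generated, so $L$ is semiample and the induced map is an isomorphism off $E$. Moreover every section of $L^{rk}$ restricts to a constant on $E$, so global generation of $L^N$ forces $L^N|_E\cong\cO_E$, i.e.\ $r\mid N$; hence the exponent of $M(X,L)$ equals $r$. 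Applying Proposition \ref{prop.semiampleoverfield} produces the contraction $\psi\colon X\to Y$ with $\cO_Y(1)$ pulling back to $L^r$; since all sections of $L^{rk}$ are constant on $E$ and $s_0$ is nonzero there, $\psi$ sends $E$ to a single $k$-point $p$ and is an isomorphism over $Y\smallsetminus\{p\}$.

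To identify $\cO_{Y,p}$, set $\mathfrak a_j:=\psi_*\cO_X(-jE)$. Because $\cO_X(-E)|_E\cong N_{E/X}^\vee$ is ample and $E$ is the only positive-dimensional fibre, $\cO_X(-E)$ is $\psi$-ample and $X\cong\Proj_Y\big(\bigoplus_{j\ge0}\mathfrak a_j\big)$. Next I would show $R^1\psi_*\cO_X(-jE)=0$ for every $j\ge1$: by the Leray spectral sequence and the projection formula, $H^0(Y,R^1\psi_*\cO_X(-jE)\otimes\cO_Y(k))\cong H^1(X,L^{rk}(-jE))$ for $k\gg0$, which vanishes by the first step (for fixed $j$, take $k$ with $mrk\ge j$), and a coherent sheaf whose high twists have no sections is zero. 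Pushing forward the defining sequences then gives short exact sequences $0\to\mathfrak a_{j+1}\to\mathfrak a_j\to\psi_*((N_{E/X}^\vee)^j)\to0$, where the right-hand term is the skyscraper at $p$ with stalk $H^0(E,(N_{E/X}^\vee)^j)$; the case $j=0$ identifies $\mathfrak a_1$ with $\mathfrak m_p$. Thus the associated graded of the filtration $\mathfrak a_\bullet$ is $\bigoplus_{j\ge0}H^0(E,(N_{E/X}^\vee)^j)$.

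Normal generation of $N_{E/X}^\vee$ says precisely that this graded algebra is generated in degree one; since $\cO_{Y,p}$ is Noetherian and the $\mathfrak a_j$ are $\mathfrak m_p$-primary with $\bigcap_j\mathfrak a_j=0$, the Rees algebra $\bigoplus_j\mathfrak a_j$ is then generated in degree one over $\cO_{Y,p}$, forcing $\mathfrak a_j=\mathfrak a_1^{\,j}=\mathfrak m_p^{\,j}$. Consequently $X\cong\Proj_Y\big(\bigoplus_j\mathfrak m_p^{\,j}\big)=B\ell_{\{p\}}Y$ and $\operatorname{gr}_{\mathfrak m_p}\cO_{Y,p}\cong\bigoplus_{j\ge0}H^0(E,(N_{E/X}^\vee)^j)$, as claimed. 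The main obstacle is the cohomological vanishing of the first step: it is what lets one lift the constant section (hence obtain the contraction) and what makes $R^1\psi_*\cO_X(-jE)$ vanish (hence yields the clean filtration computing $\operatorname{gr}\cO_{Y,p}$); the remaining passages — $\psi$-ampleness of $\cO_X(-E)$ and the identification of $\mathfrak a_\bullet$ with the $\mathfrak m_p$-adic filtration — are formal once this vanishing and normal generation are in hand.
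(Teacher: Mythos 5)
Your first half is correct and is essentially the paper's own argument: the descending induction through the sequences $0\to L^{rk}(-(j+1)E)\to L^{rk}(-jE)\to (N_{E/X}^\vee)^j\to 0$ using the hypothesis $H^1(E,(N_{E/X}^\vee)^j)=0$, the lifting of the constant section, global generation away from $E$ via the canonical section of $\cO_X(mrkE)$, and the exponent computation all match. Your second half genuinely diverges from the paper: where the paper invokes the Theorem on Formal Functions, proves $\mathfrak{m}_p^i\widehat{\cO}_{Y,p}\cong\varprojlim H^0(X,\cI_E^i/\cI_E^n)$ by an inductive limit argument, and then uses the universal property of blowing up plus a closed-immersion criterion, you instead kill $R^1\psi_*\cO_X(-jE)$ by Leray together with your step-1 vanishing, and try to read off the blow-up structure from $X\cong\Proj_Y\big(\bigoplus_j\mathfrak{a}_j\big)$ and the Rees-algebra filtration. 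That strategy is viable and arguably cleaner, but it has one genuine gap at its final algebraic step.

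The gap: you pass from ``the associated graded is generated in degree one'' to ``$\mathfrak{a}_j=\mathfrak{a}_1^{\,j}=\mathfrak{m}_p^{\,j}$'' on the strength of ``$\cO_{Y,p}$ is Noetherian, the $\mathfrak{a}_j$ are $\mathfrak{m}_p$-primary, and $\bigcap_j\mathfrak{a}_j=0$''. What degree-one generation of $\bigoplus_j\mathfrak{a}_j/\mathfrak{a}_{j+1}$ actually gives is $\mathfrak{a}_j=\mathfrak{m}_p^{\,j}+\mathfrak{a}_{j+n}$ for all $n$; to conclude $\mathfrak{a}_j=\mathfrak{m}_p^{\,j}$ you need the images of $\mathfrak{a}_{j+n}$ in $\cO_{Y,p}/\mathfrak{m}_p^{\,j}$ to vanish for $n\gg0$, i.e.\ you need the filtration $\{\mathfrak{a}_n\}$ to be cofinal with the $\mathfrak{m}_p$-adic filtration. ``Decreasing, primary, zero intersection'' does not imply cofinality: in the Artinian ring $\cO_{Y,p}/\mathfrak{m}_p^{\,j}$ the images of the $\mathfrak{a}_{j+n}$ stabilize to $(\mathfrak{a}_N+\mathfrak{m}_p^{\,j})/\mathfrak{m}_p^{\,j}$, which is zero only if some $\mathfrak{a}_N\subseteq\mathfrak{m}_p^{\,j}$, and deducing that from $\bigcap_n\mathfrak{a}_n=0$ is exactly Chevalley's lemma, whose proof requires a \emph{complete} local ring; $\cO_{Y,p}$ is not complete, so your cited hypotheses do not justify the step. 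This is precisely the difficulty the paper's proof is built to handle: it works in $\widehat{\cO}_{Y,p}$, where the formal-functions isomorphism and the inductive identities make the limit argument legitimate. Your argument can be repaired within your own framework: relative ampleness of $\cO_X(-E)$ gives finite generation of $\bigoplus_j\mathfrak{a}_j$ as an $\cO_Y$-algebra, say in degrees $\le d$; then for $n\gg0$ one has $\mathfrak{a}_n=\sum_{1\le i\le d}\mathfrak{a}_i\mathfrak{a}_{n-i}\subseteq\mathfrak{m}_p\mathfrak{a}_{n-d}$, hence $\mathfrak{a}_n\subseteq\mathfrak{m}_p^{\,k}$ with $k\to\infty$, which is the needed cofinality, and Krull intersection in $\cO_{Y,p}/\mathfrak{m}_p^{\,j}$ finishes. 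Alternatively, pass to the completion and quote Chevalley, or argue as the paper does. Without some such input, the step as written is unjustified.
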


\begin{proof}
Replacing $L_0$ by some power, we may suppose $L_0$ is very ample with
$H^1(X,L_0)=0$, the restriction of $L$ to $E$ is trivial, and $m\ge 2$.
Now, we claim, $H^1(X,L_0(jE))=0$ for $0\le j<m$.
We have this for $j=0$, and we argue by induction on $j$ using the
exact sequence in cohomology associated with
\[
0 \to L_0((j-1)E)\to L_0(jE)\to (N_{E/X}^\vee)^{m-j}\to 0.
\]
Then, by considering the sequence for $j=m$ we find that $L$ has a
global section which restricts to a nonzero constant on $E$
(under $L|_E\cong \cO_E$).
Furthermore, by the sequence for $j=m-1$ we may lift a basis of
$H^0(E,N_{E/X}^\vee)$ to global sections of $L$,
vanishing along $E$.
We may view, as well, all the global sections of $L_0$ as global sections of $L$.
So we see that $L$ is globally generated, and
the induced morphism to its image $Y$ in
projective space collapses $E$ to a point $p$,
such that the scheme-theoretic pre-image of $p$ is $E$, and
$X\smallsetminus E\cong Y\smallsetminus \{p\}$.
Let us denote by $\cI_E$ the ideal sheaf of $E$, so
$\cI_E/\cI_E^2\cong N_{E/X}^\vee$.
Then we see from these two cases of the sequence,
$$
\mathfrak{m}_p/\mathfrak{m}_p^2\to H^0(E,\cI_E/\cI_E^2)
$$
is surjective.
Again replacing $L_0$ with a tensor power
we may suppose that the morphism
$\pi\colon X\to Y$ given by the global sections of $L$
induces an isomorphism $\cO_Y\eqto \pi_*\cO_X$.
Then by the Theorem on Formal Functions, we have an isomorphism
\begin{equation}
\label{eqn.semiampleoverfieldtopointTFF}
\widehat{\cO}_{Y,p}\eqto \varprojlim H^0(X,\cO_X/\cI_E^n).
\end{equation}
We claim, the isomorphism \eqref{eqn.semiampleoverfieldtopointTFF}
comes from isomorphisms
\[ \cO_{Y,p}/\mathfrak{m}_p^n\to H^0(X,\cO_X/\cI_E^n) \]
for every $n$.

We begin by observing, using the $H^1$-vanishing hypothesis, that
\begin{equation}
\label{eqn.semiampleoverfieldtopointsurj}
H^0(X,\cI_E^i/\cI_E^{j+1})\to H^0(X,\cI_E^i/\cI_E^j)
\end{equation}
is surjective for all $i<j$.
We will use this fact several times in the remainder of the proof.

By the normal generation hypothesis, for every $j$, the homomorphism
$$
H^0(E,\cI_E/\cI_E^2)\otimes H^0(E,\cI_E^{j-1}/\cI_E^j)\to H^0(E,\cI_E^j/\cI_E^{j+1}),
$$
given by multiplication, is surjective.
Combining this with the observation about $\mathfrak{m}_p/\mathfrak{m}_p^2$, we have
the surjectivity of
$$
\mathfrak{m}_p/\mathfrak{m}_p^2\otimes H^0(E,\cI_E^{j-1}/\cI_E^j)\to H^0(E,\cI_E^j/\cI_E^{j+1}).
$$
We see more generally that if $s_1$, $\dots$, $s_\ell$ denote lifts of a
basis of $\mathfrak{m}_p/\mathfrak{m}_p^2$ to the coordinate ring of an
affine neighborhood of $p$, then multiplication by $s_1$, $\dots$, $s_\ell$
induces surjective
\begin{equation}
\label{eqn.semiampleoverfieldtopointmoregenrl}
H^0(X,\cI_E^i/\cI_E^j)^\ell \to H^0(X,\cI_E^{i+1}/\cI_E^{j+1})
\end{equation}
for all $i<j$.
Indeed, we have the
exact sequences
\begin{equation}
\label{eqn.semiampleoverfieldtopointexactseq1}
0\to H^0(X,\cI_E^{j-1}/\cI_E^j)^\ell\to H^0(X,\cI_E^i/\cI_E^j)^\ell
\to H^0(X,\cI_E^i/\cI_E^{j-1})^\ell\to 0
\end{equation}
and
\begin{equation}
\label{eqn.semiampleoverfieldtopointexactseq2}
0\to H^0(X,\cI_E^j/\cI_E^{j+1})\to H^0(X,\cI_E^{i+1}/\cI_E^{j+1})
\to H^0(X,\cI_E^{i+1}/\cI_E^j)\to 0,
\end{equation}
with compatible vertical maps given by
multiplication with $s_1$, $\dots$, $s_\ell$.
Induction on $j$ for each fixed $i$ establishes the surjectivity of
\eqref{eqn.semiampleoverfieldtopointmoregenrl}.

If we regard $i$ as fixed and
apply to \eqref{eqn.semiampleoverfieldtopointmoregenrl} the general fact
that an inverse system of surjective linear maps of finite-dimensional
vector spaces induces a surjective map of inverse limits, then we find,
\begin{equation}
\label{eqn.semiampleoverfieldtopointfurthermore}
\mathfrak{m}_p\varprojlim H^0(X,\cI_E^i/\cI_E^n)=\varprojlim H^0(X,\cI_E^{i+1}/\cI_E^n).
\end{equation}
An inductive argument now shows that we have an isomorphism
\begin{equation}
\label{eqn.semiampleoverfieldtopointTFFp}
\mathfrak{m}_p^i\widehat{\cO}_{Y,p}\eqto \varprojlim H^0(X,\cI_E^i/\cI_E^n),
\end{equation}
for every $i$.
The case $i=0$ is \eqref{eqn.semiampleoverfieldtopointTFF}, and from
\eqref{eqn.semiampleoverfieldtopointTFFp} we have an isomorphism of the
respective ideals generated on each side by $\mathfrak{m}_p$,
which with \eqref{eqn.semiampleoverfieldtopointfurthermore} gives us the
inductive step.

The claim is now immediate from \eqref{eqn.semiampleoverfieldtopointTFF}
and \eqref{eqn.semiampleoverfieldtopointTFFp}.
Notice that the claim implies that for every $n$ we have an isomorphism
\begin{equation}
\label{eqn.semiampleoverfieldtopointTFFb}
\mathfrak{m}_p^{n-1}/\mathfrak{m}_p^n\to H^0(E,\cI_E^{n-1}/\cI_E^n).
\end{equation}
The statement concerning the associated graded ring of $\cO_{Y,p}$
in the proposition is immediate from
\eqref{eqn.semiampleoverfieldtopointTFFb}.
By the universal property of blowing up,
we obtain a unique morphism $X\to B\ell_{\{p\}}Y$ factorizing $\pi$,
whose restriction to $E$ is given by
\[
E\cong \Proj\big(\bigoplus H^0(E,\cI_E^n/\cI_E^{n+1})\big)\eqto
\Proj\big(\bigoplus \mathfrak{m}_p^n/\mathfrak{m}_p^{n+1}\big).
\]
Now $X\to B\ell_{\{p\}}Y$ is a morphism of projective schemes,
such that the scheme-theoretic pre-image of every point of $B\ell_{\{p\}}Y$
is a reduced point with the same residue field.
By \cite[18.12.6]{EGAIV}, this must be a closed immersion.
But the exceptional divisor of the blow-up is an effective Cartier divisor, over
the complement of which we have an isomorphism, hence
$X\to B\ell_{\{p\}}Y$ is an isomorphism.
\end{proof}

For the version of Proposition \ref{prop.semiampleoverfieldtopoint} which
allows contraction to a more general variety than a point, we need relative
versions of the hypotheses.
A relative version of the hypothesis $H^0(E,\cO_E)=k$ is to be given
a flat contraction morphism $E\to F$,
and the conclusion will
be that this is determined by $L|_E$.

\begin{lemm}
\label{lem.ngfamily}
Let $Y=\Spec(A)$ be a Noetherian affine scheme,
$\pi\colon X\to Y$ a flat projective morphism,
$L$ an ample line bundle on $X$, and $y\in Y$ a point such that
the restriction $L_y$ of $L$ to the fiber
$X_y$ is very ample (resp.\ is normally generated) and satisfies
$H^1(X_y,L_y^n)=0$, for $n=1$ (resp.\ for all $n>0$).
Then there exists an open subscheme $U\subset Y$ with $y\in U$ such that
relative to $X\times_YU\to U$ the restriction of $L$ is very ample
(resp.\ is very ample with 
$$
\Sym^n H^0(X\times_YU,L)\to H^0(X\times_YU,L^n)
$$
surjective for all $n>0$).
\end{lemm}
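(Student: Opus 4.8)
The plan is to prove both assertions by combining Grothendieck's cohomology and base change theorem with relative Serre vanishing, the common idea being to upgrade fibrewise information at $y$ to information over an open neighborhood. The only genuine difficulty lies in the normally generated case: there normal generation is an \emph{infinite} family of conditions, one surjectivity for each $n>0$, and the resolution is to split the range of $n$ into a uniformly controlled tail $n\ge n_1$ and a finite head $1\le n<n_1$.

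First I would treat the very ample case. Since $H^1(X_y,L_y)=0$, semicontinuity of cohomology and cohomology and base change \cite{EGAIII} allow me to shrink $Y$ to an open $U_0\ni y$ on which $R^1\pi_*L=0$ and $\pi_*L$ is locally free with formation commuting with base change, so that $\pi_*L\otimes\kappa(y')\eqto H^0(X_{y'},L_{y'})$ for every $y'\in U_0$. The evaluation map $\pi^*\pi_*L\to L$ is surjective along $X_y$, as $L_y$ is globally generated, and since its cokernel has closed support whose image in $U_0$ avoids $y$, after a further shrinking it is surjective, defining a $U_0$-morphism $g\colon X\to\PP(\pi_*L)$. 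The fibre $g_y$ is a closed immersion because $L_y$ is very ample and the chosen sections span $H^0(X_y,L_y)$; as $g$ is proper over $U_0$, the locus of points where the fibre of $g$ is a closed immersion, equivalently where it is proper, unramified and radicial, is open \cite{EGAIV}, so on a neighborhood $g$ is a relative closed immersion and $L$ is relatively very ample.

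For the normally generated case I would first run the previous step, obtaining on some $U\ni y$ a relative closed immersion $i\colon X\hookrightarrow\PP:=\PP(\pi_*L)$ with $\pi_*L$ locally free and base-change compatible. Next I would arrange vanishing in all twists: relative Serre vanishing for the ample $L$ \cite{EGAIII} gives an integer $n_0$, uniform over the Noetherian $Y$, with $R^1\pi_*L^n=0$ for all $n\ge n_0$; for the finitely many $1\le n<n_0$ the fibrewise hypothesis $H^1(X_y,L_y^n)=0$ lets me shrink $U$ by semicontinuity, so that after a finite intersection $R^1\pi_*L^n=0$, and hence $\pi_*L^n$ is locally free with base-change compatibility, for every $n\ge 1$. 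Applying relative Serre vanishing instead to the ideal sheaf $\cI$ of $X$ in $\PP$ produces $n_1$, again uniform, for which $R^1\pi_*^{\PP}\cI(n)=0$ and hence the restriction $\pi_*^{\PP}\cO_{\PP}(n)\to\pi_*L^n$ is surjective for all $n\ge n_1$; composing with the canonical isomorphism $\Sym^n\pi_*L\eqto\pi_*^{\PP}\cO_{\PP}(n)$ on the projective bundle shows that $\Sym^n\pi_*L\to\pi_*L^n$ is surjective for all $n\ge n_1$ on $U$.

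It remains to handle the finite range $1\le n<n_1$. For each such $n$ the map $\Sym^n\pi_*L\to\pi_*L^n$ is a morphism of locally free sheaves whose fibre at $y$ is $\Sym^nH^0(X_y,L_y)\to H^0(X_y,L_y^n)$, which is surjective by the normal generation hypothesis; surjectivity of a map of coherent sheaves is an open condition, so each holds on a neighborhood of $y$, and intersecting the finitely many of them with $U$ yields an open $U'\ni y$ on which $\Sym^n\pi_*L\to\pi_*L^n$ is surjective for every $n\ge 1$. Shrinking so that $\pi_*L$ is free and taking global sections over the affine $U'$, where $H^0(X\times_YU',L^n)=\Gamma(U',\pi_*L^n)$ and $\Gamma(U',\Sym^n\pi_*L)=\Sym^n\Gamma(U',\pi_*L)$, gives the asserted surjectivity of $\Sym^nH^0(X\times_YU',L)\to H^0(X\times_YU',L^n)$ for all $n>0$, together with relative very ampleness from the first part. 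As indicated, the main obstacle is producing a single neighborhood valid for all $n$ simultaneously, which is precisely what the uniform tail from relative Serre vanishing, combined with the finite head controlled by semicontinuity, is designed to overcome.
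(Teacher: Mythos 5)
Your proof is correct and takes essentially the same route as the paper's: cohomology and base change plus openness arguments for the very ample case, and for normal generation the identical head/tail split, where the tail (surjectivity of $\Sym^n H^0\to H^0(L^n)$ for $n\gg 0$) comes from Serre vanishing of $H^1$ of large twists of the ideal sheaf of $X$ in the projective bundle, and the finitely many small $n$ are handled by shrinking $Y$ using the fibrewise hypotheses. The only differences are cosmetic: you phrase the head via openness of surjectivity of the sheaf maps $\Sym^n\pi_*L\to\pi_*L^n$ and track two constants, whereas the paper kills the cokernels of the global-section maps directly over the affine base with a single constant.
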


\begin{proof}
By the machinery of cohomology and base change, we may replace $Y$ by
a Zariski neighborhood of $y$, such that the coherent sheaf $\pi_*L$ is locally free
and its formation commutes with arbitrary base change.
Since $L_y$ is globally generated,
the cokernel of $\pi^*\pi_*L\to L$ has trivial fiber over $y$, hence may be
assumed to vanish if we shrink $Y$ further.
We obtain a morphism $X\to \PP((\pi_*L)^\vee)$ over $Y$,
and this restricts to a closed immersion of $X_y$ since $L_y$ is very ample.
The support of the cokernel of $\cO_{\PP^N_Y}\to \cO_X$ is disjoint from
$X_y$, hence by further shrinking we find that $L$ is relatively very ample.
There exists $n_0$ such that 
$$
\Sym^n H^0(X,L)\to H^0(X,L^n)
$$ is surjective
for all $n>n_0$ (consequence of the vanishing of $H^1$ of all sufficiently large twists
of the ideal sheaf of $X$ in $\PP(H^0(X,L)^\vee)$).
Now suppose that $L_y$ is normally generated.
Then, as above, we may suppose that $\pi_*L^n$ is locally free and its
formation commutes with arbitrary base change for all $1\le n\le n_0$.
For all such $n$, we consider the cokernel of $\Sym^n H^0(X,L)\to H^0(X,L^n)$,
which may be assumed to vanish by shrinking $Y$, and thus we have the desired
conclusion when $L_y$ is normally generated.
\end{proof}

\begin{lemm}
\label{lem.Hivanishpairample}
Let $X$ be a scheme, with projective morphism $\pi\colon X\to S$ to a Noetherian
scheme $S$ and relatively ample line bundles $L$ and $M$.
Then for every coherent sheaf $\cF$ on $X$ there exists
$n\in \N$, such that $R^i\pi_*(\cF\otimes L^a\otimes M^b)=0$ for all $i>0$ and
$a$, $b\in \N$ with $\max(a,b)\ge n$.
\end{lemm}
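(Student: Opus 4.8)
The plan is to reduce the statement to two symmetric vanishing claims, one in which $a$ is large uniformly in $b$ and one with the roles of $L$ and $M$ reversed, and then to take the maximum of the resulting bounds. Since formation of $R^i\pi_*$ and the vanishing in question are local on $S$, and $S$ is Noetherian hence quasi-compact, I would first reduce to the case $S=\Spec(A)$ with $A$ Noetherian, where $R^i\pi_*(\mathcal H)$ vanishes if and only if $H^i(X,\mathcal H)=0$. It then suffices to prove the following claim together with its analogue obtained by interchanging $L$ and $M$: there exists $n_1$ such that $R^i\pi_*(\cF\otimes L^a\otimes M^b)=0$ for all $i>0$, all $a\ge n_1$, and all $b\ge 0$. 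Indeed, writing $n_2$ for the bound from the interchanged statement and setting $n=\max(n_1,n_2)$ settles the lemma, because $\max(a,b)\ge n$ forces $a\ge n_1$ or $b\ge n_2$.

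To prove the claim I would argue by descending induction on $i$, establishing simultaneously, for \emph{every} coherent sheaf $\cG$, the statement $Q(i)$: there is $n(\cG,i)$ with $R^j\pi_*(\cG\otimes L^a\otimes M^b)=0$ for all $j\ge i$, all $a\ge n(\cG,i)$, and all $b\ge 0$. As $\pi$ is projective over a Noetherian base, the fibre dimensions are bounded by some $r$, so $R^j\pi_*=0$ for $j>r$; this provides the base case $Q(r+1)$ with $n=0$. For the inductive step I would fix $c$ large enough that $M^c$ is relatively globally generated, and (over the affine base) choose a surjection $\cO_X^{\oplus N}\twoheadrightarrow M^c$ arising from generators of the finitely generated $A$-module $\pi_*(M^c)$; let $\cK$ be its kernel, a coherent sheaf. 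Since $M^c$ is a line bundle, the sequence $0\to\cK\to\cO_X^{\oplus N}\to M^c\to 0$ is locally split, hence remains exact after tensoring with any sheaf.

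Given $\cG$ and assuming $Q(i+1)$, the range $j\ge i+1$ of $Q(i)$ is immediate from $Q(i+1)$ applied to $\cG$, so only $R^i\pi_*(\cG\otimes L^a\otimes M^b)=0$ must be arranged. Tensoring the displayed sequence by $\cG\otimes L^a\otimes M^{b-c}$ (for $b\ge c$) and passing to the long exact sequence of $R\pi_*$ shows that this vanishing holds as soon as $R^i\pi_*(\cG\otimes L^a\otimes M^{b-c})=0$ and $R^{i+1}\pi_*(\cK\otimes\cG\otimes L^a\otimes M^{b-c})=0$. The second vanishing holds for all $a\ge n(\cK\otimes\cG,i+1)$ and all $b\ge c$ by the inductive hypothesis $Q(i+1)$ applied to the coherent sheaf $\cK\otimes\cG$ — this is exactly where carrying the statement for all coherent sheaves pays off. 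The first condition then lets me induct on $b$ in steps of $c$: the residues $0\le b<c$ are finitely many, and for each of them relative Serre vanishing applied to the coherent sheaf $\cG\otimes M^b$ yields $R^i\pi_*(\cG\otimes M^b\otimes L^a)=0$ for $a$ beyond some threshold. Taking $a$ larger than the finitely many thresholds so produced establishes $R^i\pi_*(\cG\otimes L^a\otimes M^b)=0$ for all $b\ge 0$, completing $Q(i)$ and hence, at $i=1$ and $\cG=\cF$, the claim.

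The main obstacle is precisely this uniformity over $b$ for large $a$: ordinary Serre vanishing produces, for each fixed $b$, a threshold on $a$ that a priori grows with $b$, with no monotonicity to exploit directly. The device that overcomes it is the relative-global-generation sequence for $M^c$, which trades one power of $M$ for the auxiliary coherent sheaf $\cK$ at the cost of raising the cohomological degree by one; the descending induction on $i$ then supplies, from level $i+1$, the uniform vanishing needed to run a finite induction on $b$ whose base cases are handled by ordinary Serre vanishing.
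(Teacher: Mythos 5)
Your proof is correct, and it takes a genuinely different route from the paper's. The paper reduces, over an affine base, from $\max(a,b)$ to $\min(a,b)$ (using one-variable Serre vanishing for the finitely many residual twists), then replaces $L$ and $M$ by powers so as to assume them very ample, pushes $\cF$ forward under a closed immersion $X\to \PP^\ell_S\times_S\PP^m_S$, and runs a descending induction on $i$ there, starting at $i=\ell+m+1$; its induction step resolves the sheaf itself, $0\to\cG\to\cO(-c,-c)^{\oplus d}\to\cF\to 0$, using ampleness of $\cO(1,1)$, and concludes from the known cohomology of line bundles on the product. You instead keep the $\max$ formulation and split it into two symmetric one-sided claims ($a$ large uniformly in $b\ge 0$, and vice versa), stay on $X$ itself, take as base case of the descending induction the bound on fiber dimension, and in the induction step resolve $M^c$ by a trivial bundle rather than resolving the coherent sheaf: the kernel $\cK$ of $\cO_X^{\oplus N}\to M^c$ is what produces the $+1$ shift in cohomological degree (your observation that this sequence is locally split, so that tensoring preserves exactness, is exactly right), and uniformity in $b$ comes from an inner induction on $b$ in steps of $c$ whose finitely many base cases are handled by ordinary one-variable relative Serre vanishing. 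The two arguments share the same engine --- a short exact sequence trading a twist for a shift in cohomological degree, fed into a descending induction on $i$ --- but yours avoids the passage to a product of projective spaces and any K\"unneth-type computation of line-bundle cohomology there, at the cost of quantifying the inductive statement over all coherent sheaves simultaneously (necessary, since the sheaf $\cK\otimes\cG$ changes at each step) and of running the extra induction on $b$; the paper's route, once on $\PP^\ell_S\times_S\PP^m_S$, makes every vanishing statement it needs a classical fact.
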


\begin{proof}
We immediately reduce to the case $S$ when is affine.
Using the ampleness of $L$ and $M$,
we reduce further to establishing the assertion with
$\max(a,b)$ replaced by $\min(a,b)$.
By considering $\cF$ twisted by various powers of $L$ and $M$
and replacing $L$ and $M$ with suitable powers,
we may suppose that $L$ and $M$ are very ample.
By pushing forward $\cF$ along the closed immersion
$X\to \PP_S^\ell\times_S \PP_S^m$ for some $\ell$, $m\in \N$ determined by
the sections of $L$ and $M$, we are reduced to proving the following assertion:
if $\cF$ is a
coherent sheaf on $\PP_S^\ell\times_S \PP_S^m$ then there exists $n\in \N$ such
that 
$$
H^i(\PP_S^\ell\times_S \PP_S^m,\cF(a,b))=0\quad \text{  for all $a$, $b\in \N$ with } \, \min(a,b)\ge n.
$$
Here, $\cF(a,b)$ denotes the twist by the pullback of $\cO_{\PP_S^\ell}(a)$ and
the pullback of $\cO_{\PP_S^m}(b)$.
The proof is by descending induction on $i$, starting from the case
$i=\ell+m+1$, which is trivial by the vanishing of cohomology in degrees
greater than the relative dimension.
For the inductive step, the ampleness of
$\cO_{\PP_S^\ell\times_S \PP_S^m}(1,1)$ implies the existence of $c$, $d\in \N$
such that there is an exact sequence
\[ 0\to \cG\to \cO_{\PP_S^\ell\times_S \PP_S^m}(-c,-c)^{\oplus d}\to \cF\to 0 \]
of coherent sheaves.
Twisting and applying the induction hypothesis and known cohomology of line bundles on
$\PP_S^\ell\times_S \PP_S^m$, we obtain the result.
\end{proof}

\begin{lemm}
\label{lem.Hivanishampleggen}
Let $X$ be a scheme, with projective morphism $\pi\colon X\to S$ to a Noetherian
scheme $S$ and line bundles $L$ and $M$,
such that locally on $S$ the line bundle $L$ is semiample, determining a
flat contraction $\psi\colon X\to Y$,
and $M$ is relatively ample.
Suppose $c$, $d\in \N$ are such that for all $y\in Y$ the fiber $X_y$ satisfies
$H^i(X_y,(M|_{X_y})^j)=0$ for all $1\le i\le c$ and $j\ge d$.
Then there exists $n\in \N$ such that 
$$
R^i\pi_*(L^a\otimes M^b)=0
$$ 
for all $1\le i\le c$
and $a$, $b\in\N$ such that $b\ge d$ and $\max(a,b)\ge n$.
\end{lemm}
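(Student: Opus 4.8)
The plan is to reduce the assertion, locally on $S$, to the bivariate Serre-type vanishing of Lemma \ref{lem.Hivanishpairample}, applied not on $X$ but on an ambient product of projective spaces. Since the conclusion is local on $S$ (and $S$ is quasi-compact, so finitely many local bounds may afterwards be amalgamated into a single $n$), I would first shrink $S$ to an affine scheme over which $L$ is semiample and its flat contraction $\psi\colon X\to Y$ is defined, with $g\colon Y\to S$ the structure morphism, $\pi=g\circ\psi$, and $A:=\cO_Y(1)$ the relatively ample line bundle produced by the contraction, so that $\psi^*A\cong L^m$ for $m$ the exponent of $M(X,L)$. The essential point to record is that $A$ is relatively ample over $S$, while $L^m=\psi^*A$ is only nef and is trivial on the fibres of $\psi$; thus $L$ itself cannot serve as one of the two ample line bundles demanded by Lemma \ref{lem.Hivanishpairample}. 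After replacing $A$ and $M$ by suitable powers I may assume both are relatively very ample over $S$, and after shrinking $S$ further I obtain a closed immersion $\iota\colon X\hookrightarrow \PP^\ell_S\times_S\PP^m_S$ with $\cO(1,0)|_X\cong L^{m}$ (realised through $\psi$ followed by the embedding of $Y$ by $A$) and $\cO(0,1)|_X\cong M$.

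Next I would absorb the ``remainder'' of $L$ modulo $m$ into a coherent sheaf. Writing $a=qm+r$ with $0\le r<m$, set $\cF_r:=\iota_*(L^r)$; this is a coherent sheaf on $P:=\PP^\ell_S\times_S\PP^m_S$, and there are only finitely many $r$ to consider. The projection formula gives $\cF_r\otimes\cO(q,b)\cong\iota_*(L^{qm+r}\otimes M^b)=\iota_*(L^a\otimes M^b)$, and since $\iota$ is a closed immersion we have $R^ip_*(\cF_r\otimes\cO(q,b))\cong R^i\pi_*(L^a\otimes M^b)$, where $p\colon P\to S$. On $P$ the line bundles $\cO(1,0)$ and $\cO(0,1)$ are genuinely relatively ample over $S$, so Lemma \ref{lem.Hivanishpairample} applied to each $\cF_r$ yields an integer $n_r$ with $R^ip_*(\cF_r\otimes\cO(q,b))=0$ for all $i>0$ whenever $\max(q,b)\ge n_r$. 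Taking $n:=m\cdot\max_r n_r+m$, the hypothesis $\max(a,b)\ge n$ forces $\max(\lfloor a/m\rfloor,b)\ge\max_r n_r$, whence $R^i\pi_*(L^a\otimes M^b)=0$ for every $i>0$ in the required range.

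I expect the main obstacle to be conceptual rather than computational: it is precisely that $L$ is semiample but not ample, so applying Lemma \ref{lem.Hivanishpairample} directly on $X$ with the pair $(L,M)$ is unavailable, and the \emph{uniformity} of the vanishing in the two exponents $a$ and $b$ must be produced by other means. The device above resolves this by transporting the two ampleness directions---$A$ on $Y$, which governs large $a$, and $M$ on the fibres, which governs large $b$---onto the single space $P$, where the bivariate lemma controls both at once. I note that this route in fact delivers the vanishing for all $i>0$ and every $b$; the stated hypotheses, the fibrewise vanishing $H^i(X_y,(M|_{X_y})^j)=0$ for $1\le i\le c$ and $j\ge d$, are exactly what one uses in the alternative, contraction-based argument through the Leray spectral sequence for $\pi=g\circ\psi$: cohomology and base change give $R^q\psi_*(L^a\otimes M^b)=0$ for $1\le q\le c$ and $b\ge d$ (using that $L^m$, and in the relevant cases $L$ itself, restricts trivially to the fibres of $\psi$), so that $R^i\pi_*(L^a\otimes M^b)\cong R^ig_*\psi_*(L^a\otimes M^b)$ for $1\le i\le c$, and the residual $g_*$-vanishing---where the genuine difficulty of uniformity in $b$ re-emerges---is once more supplied by the embedding and Lemma \ref{lem.Hivanishpairample}.
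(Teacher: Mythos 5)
Your main argument fails at the point where you invoke Lemma \ref{lem.Hivanishpairample} on $P=\PP^\ell_S\times_S\PP^m_S$ with the pair $\cO(1,0)$, $\cO(0,1)$: that lemma requires \emph{both} line bundles to be relatively ample over $S$, and neither of these is --- each is pulled back from one factor of $P$ and restricts trivially to the fibers of the corresponding projection, hence is not ample on the fibers of $P\to S$. This is not a removable technicality. The paper's proof of Lemma \ref{lem.Hivanishpairample} uses the ampleness of both bundles (via Serre vanishing, applied to the finitely many twists with one exponent held below the threshold) precisely to reduce the $\max(a,b)$-statement to the $\min(a,b)$-statement, and it is only the $\min$-statement that is then established on the product of projective spaces. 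Consequently, what your construction with the sheaves $\cF_r=\iota_*(L^r)$ legitimately yields is uniform vanishing for $\min(q,b)$ large --- which is exactly the first step of the paper's proof (``the pair of maps to projective spaces \dots\ leads to the vanishing of all higher cohomology for $\min(a,b)\ge n$'') --- and nothing beyond it.

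Indeed, your claimed strengthening (vanishing for all $i>0$ and \emph{every} $b$ once $\max(a,b)\ge n$, with no use of the fibrewise hypothesis) is false. Take $S=\Spec(k)$, $X=\PP^1\times E$ with $E$ an elliptic curve, $L=p_1^*\cO_{\PP^1}(1)$ (semiample, with flat contraction $\psi=p_1\colon X\to Y=\PP^1$ and exponent $1$), and $M=\cO_{\PP^1}(1)\boxtimes A$ with $A$ ample on $E$. Then for $b=0$ and every $a\ge 0$,
\[ R^1\pi_*(L^a)=H^1(X,\cO_{\PP^1}(a)\boxtimes\cO_E)\supseteq H^0(\PP^1,\cO_{\PP^1}(a))\otimes H^1(E,\cO_E)\neq 0, \]
which also shows concretely that the $\max$-form of Lemma \ref{lem.Hivanishpairample} fails for the non-ample pair $\cO(1,0)$, $\cO(0,1)$ on $\PP^1\times\PP^N$ applied to $\iota_*\cO_X$. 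So the constraint $b\ge d$ and the hypothesis $H^i(X_y,(M|_{X_y})^j)=0$ are indispensable: they are what kills the cohomology coming from the fibers of $\psi$, on which $L$ has no positivity --- the fact that your argument never used the hypothesis was the warning sign. The route you relegate to a closing remark (Leray for $\pi=g\circ\psi$, cohomology and base change giving $R^q\psi_*(L^a\otimes M^b)=0$ for $1\le q\le c$, $b\ge d$, then pushing down to $Y$) is in essence the paper's actual treatment of the remaining range $d\le b<n$, $a\gg 0$, after the $\min$-step and after handling $b\ge n$ by relative ampleness of $M$; note that the residual vanishing on $Y$ there needs only ordinary relative Serre vanishing for $\cO_Y(1)$ applied to the finitely many sheaves $\psi_*(L^e\otimes M^b)$ with $0\le e<\ell$, $d\le b<n$, not Lemma \ref{lem.Hivanishpairample} again. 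In short, you have the right ingredients but with their roles inverted: the ``alternative'' is the necessary argument, and your primary route proves only the $\min$-version.
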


\begin{proof}
As in the proof of the previous lemma, we may assume that $S$ is affine and
observe that the pair of maps to projective spaces,
determined by suitable powers of $L$ and $M$, leads to the vanishing of
all higher cohomology for $\min(a,b)\ge n$, for some $n\in \N$.
By ampleness, after possibly increasing $n$, we obtain the vanishing for
all $a$, $b\in \N$ with $b\ge n$.
Now we consider a fixed $b$ with $d\le b<n$ and $e$ with $0\le e<\ell$,
where $\ell$ is the exponent of $M(X,L)$, and apply the hypothesis, the
Leray spectral sequence, and the machinery of cohomology and base change
to obtain 
$$
H^i(X,L^{a\ell+e}\otimes M^b)\cong H^i(Y,\cO_Y(a)\otimes \psi_*(L^e\otimes M^b)),
$$
which by ampleness vanishes for sufficiently large $a$.
This gives the result.
\end{proof}

We recall \cite[6.10.1]{EGAIV}, a locally Noetherian scheme $Y$ is said to be
\emph{normally flat} along a closed subscheme $F$ if
$\cI_F^j/\cI_F^{j+1}$ is a locally free sheaf of $\cO_F$-modules for every $j$,
where $\cI_F$ denotes the ideal sheaf of $F$.

\begin{prop}
\label{prop.semiampleoverfieldtoscheme}
Let $X$ be a projective scheme over $k$ with
ample line bundle $L_0$ and effective Cartier divisor $E\subset X$.
Suppose for some $m>0$, the line bundle $L_0|_E\otimes (N_{E/X})^m$
is semiample and determines a flat contraction
$\psi\colon E\to F$.
Suppose, furthermore, $N_{E/X}^\vee$ is ample relative to $\psi$ and
on fibers is normally generated with vanishing $H^1$
of all positive powers.
Then $L:=L_0(mE)$ is semiample with exponent of $M(X,L)$ equal to
exponent of $M(E,L_0|_E\otimes(N_{E/X})^m)$ and determines a contraction
$X\cong B\ell_FY\to Y$ of $E$ to $F$ such that $Y$ is normally flat along $F$
with normal cone
\[ \Spec\Big(\bigoplus_{n\ge 0} \psi_*(N_{E/X}^\vee)^n\Big). \]
\end{prop}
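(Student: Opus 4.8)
The plan is to run the proof of Proposition~\ref{prop.semiampleoverfieldtopoint} relatively over $F$, systematically replacing the finite-dimensional $k$-vector spaces appearing there by coherent sheaves on $F$. First I would normalize the data. Using Lemma~\ref{lem.ngfamily} I may replace $L_0$ by a sufficiently high power so that, on the fibers of $\psi$, the bundle $N_{E/X}^\vee$ retains the normal-generation and $H^1$-vanishing hypotheses, while in addition $L_0$ becomes very ample and the restriction $L|_E=L_0|_E\otimes(N_{E/X})^m$ becomes exactly the contraction bundle $\psi^*A$ for an ample line bundle $A$ on $F$ (possible since, by Proposition~\ref{prop.semiampleoverfield}(iii), some power of $L|_E$ is pulled back from $F$). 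With this normalization $L_0(jE)|_E\cong \psi^*A\otimes(N_{E/X}^\vee)^{m-j}$, and I claim $H^1(X,L_0(jE))=0$ for $0\le j<m$. This follows by induction on $j$, as in the point case, from the exact sequences
\[ 0\to L_0((j-1)E)\to L_0(jE)\to \psi^*A\otimes(N_{E/X}^\vee)^{m-j}\to 0, \]
where now the vanishing of $H^1$ of the quotient is obtained from the Leray spectral sequence for $\psi$, together with $R^1\psi_*(N_{E/X}^\vee)^{m-j}=0$ (fiberwise $H^1$-vanishing and base change) and the ampleness of $A$ on $F$.

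Next I would produce the sections that generate $L$ and effect the contraction. From the sequence for $j=m$, the sections of $L|_E=\psi^*A$ pulled back from $A$ lift to $H^0(X,L)$, and from the sequence for $j=m-1$ a local generating set of $A\otimes\psi_*N_{E/X}^\vee$ lifts to sections of $L$ vanishing on $E$; together with the sections of $L_0$, viewed inside $L$, these generate $L$---away from $E$ by ampleness of $L_0$, and along $E$ because $\psi^*A$ is globally generated. The resulting contraction $\pi\colon X\to Y$ determined by $L$ in the sense of Proposition~\ref{prop.semiampleoverfield} satisfies $\pi_*\cO_X=\cO_Y$; it restricts to $\psi\colon E\to F\hookrightarrow Y$, induces an isomorphism $X\smallsetminus E\eqto Y\smallsetminus F$, and has $E$ as the scheme-theoretic preimage of $F$. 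That the exponent of $M(X,L)$ equals that of $M(E,L|_E)$ then follows from Proposition~\ref{prop.semiampleoverfield}(iii), by comparing which powers of $L$ descend along $\pi$ with which powers of $L|_E$ descend along $\psi$.

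The heart of the argument is the local structure of $\cO_Y$ along $F$, which replaces the associated-graded computation \eqref{eqn.semiampleoverfieldtopointmoregenrl}--\eqref{eqn.semiampleoverfieldtopointTFFb}. Working locally on $F$, the Theorem on Formal Functions applied to $\pi$ identifies the completion of $\cO_Y$ along $F$ with $\varprojlim\psi_*(\cO_X/\cI_E^n)$. The fiberwise normal-generation and $H^1$-vanishing hypotheses show, via cohomology and base change, that each $\psi_*(N_{E/X}^\vee)^n$ is locally free on $F$ with formation commuting with base change, and that multiplication
\[ \psi_*N_{E/X}^\vee\otimes\psi_*(N_{E/X}^\vee)^{n-1}\to\psi_*(N_{E/X}^\vee)^n \]
is surjective. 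Transcribing the inverse-limit argument of the point case---with the lifted sections of $A\otimes\psi_*N_{E/X}^\vee$ now playing the role of the generators $s_1,\dots,s_\ell$ of $\mathfrak{m}_p/\mathfrak{m}_p^2$---produces isomorphisms $\cI_F^n/\cI_F^{n+1}\eqto\psi_*(N_{E/X}^\vee)^n$ for all $n$. This simultaneously shows that $Y$ is normally flat along $F$ (the graded pieces being locally free over $\cO_F$) and identifies the normal cone with $\Spec(\bigoplus_{n\ge0}\psi_*(N_{E/X}^\vee)^n)$. Finally, the universal property of blowing up furnishes a morphism $X\to B\ell_FY$ over $Y$ restricting on $E$ to the identification $E\eqto\Proj_F(\bigoplus_n\cI_F^n/\cI_F^{n+1})$; as in the point case it is a morphism of projective schemes whose scheme-theoretic fibers are reduced points with unchanged residue field, hence a closed immersion by \cite[18.12.6]{EGAIV}, and an isomorphism since it is one away from the Cartier divisor $E$.

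I expect the main obstacle to be the relative associated-graded computation of the third paragraph. In Proposition~\ref{prop.semiampleoverfieldtopoint} the surjectivity and inverse-limit steps took place in finite-dimensional $k$-vector spaces, where surjective inverse systems automatically pass to the limit; over the base $F$ one must instead guarantee that the sheaves $\psi_*(N_{E/X}^\vee)^n$ and the quotients $\cI_F^n/\cI_F^{n+1}$ are locally free $\cO_F$-modules whose formation is stable under base change, so that the same formal manipulations remain valid. This is exactly the point at which the fiberwise normal generation and the relative ampleness of $N_{E/X}^\vee$ over $\psi$ enter, through Lemmas~\ref{lem.ngfamily} and~\ref{lem.Hivanishampleggen} and the machinery of cohomology and base change.
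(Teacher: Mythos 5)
Your overall architecture is the same as the paper's: normalize the data, prove $H^1(X,L_0(jE))=0$ for $0\le j<m$, lift sections of $L|_E$ and of $L|_E\otimes N_{E/X}^\vee$ to get a globally generated $L$ and the contraction $\pi\colon X\to Y$, run the point-case inverse-limit argument relatively over $F$ to identify $\cI_F^n/\cI_F^{n+1}$ with $\psi_*(N_{E/X}^\vee)^n$, and conclude via the universal property of blowing up and \cite[18.12.6]{EGAIV}. Paragraphs two through four of your proposal are essentially the paper's argument and are fine. The genuine gap is in your first paragraph, at the vanishing $H^1(E,\psi^*A\otimes(N_{E/X}^\vee)^{m-j})=0$. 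Leray together with $R^1\psi_*(N_{E/X}^\vee)^{m-j}=0$ does reduce this to $H^1(F,A\otimes\psi_*(N_{E/X}^\vee)^{m-j})=0$, but ``ampleness of $A$ on $F$'' cannot deliver that for the given $A$: Serre vanishing requires replacing $A$ by a power depending on the coherent sheaf being twisted, and here lies a circularity. The only way to raise the power of $A$ is to replace $L_0$ by $L_0^d$, which replaces $m$ by $dm$, so the required vanishings become $H^1(F,A^d\otimes\psi_*(N_{E/X}^\vee)^i)=0$ for \emph{all} $1\le i\le dm$: the family of sheaves $\psi_*(N_{E/X}^\vee)^i$ grows in lockstep with the power of $A$, and fibered Serre vanishing, which is not uniform in the sheaf, never catches up. Concretely, if $c$ is an integer such that $N_{E/X}^\vee\otimes\psi^*A^c$ is ample, your reduction has no positivity left to exploit in the range $d/c<i\le dm$; the positivity that actually rescues this range lives on $E$, not on $F$, namely the ampleness of $L_0|_E\otimes(N_{E/X})^a$ for $1\le a\le m-1$, which your argument never establishes or uses.

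This is precisely the difficulty that the paper's Lemmas \ref{lem.Hivanishpairample} and \ref{lem.Hivanishampleggen} exist to break. The paper first notes that $L_0|_E\otimes(N_{E/X})^a$ is ample for $1\le a\le m-1$ (its $m$-th power is the tensor product of a very ample and a globally generated line bundle) and then proves the \emph{uniform} claim $H^1(E,(L_0|_E)^{n_1}\otimes(N_{E/X})^{n_2})=0$ for all sufficiently large $n_1$ and the entire range $0\le n_2<mn_1$: the subrange $0\le n_2\le(m-1)n_1$ (your large-$i$ region) via Lemma \ref{lem.Hivanishpairample}, a uniform vanishing for products of powers of two ample line bundles, and the subrange $(m-1)n_1\le n_2<mn_1$ (your small-$i$ region) via Lemma \ref{lem.Hivanishampleggen} applied to the semiample $L|_E$ and the ample $M=L_0|_E\otimes(N_{E/X})^{m-1}$ --- the latter lemma being, in effect, your Leray argument done correctly, where only finitely many pushforward sheaves on the base need Serre vanishing because the problematic powers of $M$ are bounded. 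Only with this uniform statement can one ``again replace $L_0$ by a power'' and obtain the finitely many vanishings feeding the induction. Note also that you misplace the main difficulty: your closing paragraph locates it in the relative associated-graded computation and cites Lemma \ref{lem.Hivanishampleggen} there, but that step needs only Lemma \ref{lem.ngfamily}, the surjection $\cI_F/\cI_F^2\to\psi_*(\cI_E/\cI_E^2)$, and cohomology-and-base-change; Lemma \ref{lem.Hivanishampleggen} is needed at the vanishing step above, exactly where your proposal omits it.
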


\begin{proof}
Replacing $L_0$ by some power, we may suppose that $L_0$ is very ample,
with $H^1(X,L_0)=0$, $m\ge 2$, and $L|_E$ globally generated,
with sections defining $\psi\colon E\to F$ as in the statement.
Since the tensor product of a very ample line bundle and a globally generated
line bundle is very ample \cite[4.4.8]{EGAII},
for every $1\le a\le m-1$ the line bundle
$$
(L_0|_E)^{m-a}\otimes(L_0|_E\otimes N_{E/X}^m)^a\cong (L_0|_E)^m\otimes (N_{E/X})^{am}
$$
is very ample.
So, $L_0|_E\otimes (N_{E/X})^a$ is ample for $1\le a\le m-1$.
Now we claim, 
$$
H^1(E,(L_0|_E)^{n_1}\otimes (N_{E/X})^{n_2})=0
$$ 
for all
sufficiently large $n_1$ and $0\le n_2<mn_1$.
Indeed, Lemma \ref{lem.Hivanishpairample} applied to the pairs of line bundles
$L_0|_E\otimes (N_{E/X})^{a-1}$ and $L_0|_E\otimes (N_{E/X})^a$ for
$a=1$, $\dots$, $m-1$ takes care of the case $0\le n_2\le (m-1)n_1$.
We consider the pair of line bundles
$L|_E$ (semiample) and $M:=L_0|_E\otimes (N_{E/X})^{m-1}$ (ample).
Since the restriction of $L$ to any fiber of $\psi$ is trivial,
the restriction of $M$ is isomorphic to the restriction of $N_{E/X}^\vee$,
and by assumption $H^1$ of all positive powers vanish.
An application of Lemma \ref{lem.Hivanishampleggen} completes the verification
of the claim.
As a consequence, after again replacing $L_0$ by a power, we may suppose that
\begin{equation}
\label{eqn.semiampleoverfieldtoschemeH1}
H^1(E,L_0|_E\otimes(N_{E/X})^j)=0
\end{equation}
for $0\le j<m$.

The exact sequence
\[ 0\to L_0((j-1)E)\to L_0(jE)\to L_0|_E\otimes (N_{E/X})^j\to 0 \]
and an inductive argument lead to
$H^1(X,L_0(jE))=0$ for $0\le j<m$.
By the vanishing for $j=m-1$ and $j=m-2$, the homomorphisms
\[ H^0(X,L)\to H^0(E,L|_E)\quad\text{and}\quad
H^0(X,L(-E))\to H^0(E,L|_E\otimes N_{E/X}^\vee) \]
are surjective.
It follows that $L$ is globally generated, and if we let $\pi\colon X\to Y$
denote the contraction that it determines,
then the image of the restriction of $\pi$ to $E$ is a copy of $F$ in $Y$,
such that the scheme-theoretic pre-image of $F$ under $\pi$ is $E$.
Moreover, $\pi$ induces a surjective homomorphism of sheaves
\begin{equation}
\label{eqn.semiampleoverfieldtoschememoreover}
\cI_F/\cI_F^2\to \psi_*(\cI_E/\cI_E^2).
\end{equation}

Let $\mathfrak{Y}$ denotes the formal completion of $Y$ along $F$.
By the Theorem on Formal Functions in the form \cite[4.1.5]{EGAIII}, $\pi$ induces
an isomorphism
\begin{equation}
\label{eqn.semiampleoverfieldtoschemeTFF}
\cO_{\mathfrak{Y}}\eqto \varprojlim \pi_*(\cO_X/\cI_E^n).
\end{equation}
We claim that the isomorphism \eqref{eqn.semiampleoverfieldtoschemeTFF} comes
from isomorphisms
\begin{equation}
\label{eqn.semiampleoverfieldtoschemeTFFa}
\cO_Y/\cI_F^n\eqto \pi_*(\cO_X/\cI_E^n),
\end{equation}
for every $n$.

We start by observing, by the machinery of cohomology and base change,
that the hypothesis of vanishing of $H^1$ of positive powers of $N_{E/X}^\vee$
implies $R^1\psi_*(\cI_E^j/\cI_E^{j+1})=0$ for all $j>0$.
A consequence is the surjectivity of
\begin{equation}
\label{eqn.semiampleoverfieldtoschemesurj}
\pi_*(\cI_E^i/\cI_E^{j+1})\to \pi_*(\cI_E^i/\cI_E^j)
\end{equation}
for all $i<j$.
As a consequence, in the inverse system in
\eqref{eqn.semiampleoverfieldtoschemeTFF} the transition
maps are epimorphisms,
and the same holds with $\cO_X$ replaced by any positive power of $\cI_E$.

By Lemma \ref{lem.ngfamily}, the homomorphism
\[ \psi_*(\cI_E/\cI_E^2)\otimes \psi_*(\cI_E^{j-1}/\cI_E^j)\to
\psi_*(\cI_E^j/\cI_E^{j+1}) \]
induced by $\pi$ is surjective for all $j$.
Combining this with \eqref{eqn.semiampleoverfieldtoschememoreover} we have
the surjectivity of
$$
\cI_F/\cI_F^2\otimes \psi_*(\cI_E^{j-1}/\cI_E^j)\to \psi_*(\cI_E^j/\cI_E^{j+1}).
$$

The remainder of the argument follows closely the proof of
Proposition \ref{prop.semiampleoverfieldtopoint}.
If $V\subset Y$ is any affine open and
$s_1$, $\dots$, $s_\ell$ are sections that generate $\cI_F|_V$ as a sheaf of
$\cO_V$-modules, then multiplication by $s_1$, $\dots$, $s_\ell$ induces surjective
\begin{equation}
\label{eqn.semiampleoverfieldtoschememoregenrl}
H^0(U,\cI_E^i.\cI_E^j)^\ell\to H^0(U,\cI_E^{i+1}/\cI_E^{j+1})
\end{equation}
for all $i<j$,
where $U\subset X$ denotes the pre-image of $V$.
We see this by considering exact sequences,
analogous to \eqref{eqn.semiampleoverfieldtopointexactseq1}--\eqref{eqn.semiampleoverfieldtopointexactseq2},
but with sections on $U$ instead of $X$.

Fixing $i$ and letting $K_j$ denote the kernel of
\eqref{eqn.semiampleoverfieldtoschememoregenrl}, we write the corresponding
short exact sequences for consecutive values $j=n-1$ and $j=n$ and
connect them by homomorphisms of the form
\eqref{eqn.semiampleoverfieldtoschemesurj}, to see that
the induced homomorphism of kernels $K_n\to K_{n-1}$ is surjective.
After applying $\varprojlim$ we obtain a short exact sequence,
which tells us that
\[
\cI_F\varprojlim \pi_*(\cI_E^i/\cI_E^n)=
\varprojlim \pi_*(\cI_E^{i+1}/\cI_E^n).
\]
An inductive argument,
as in the proof of Proposition \ref{prop.semiampleoverfieldtopoint},
shows that we have an isomorphism
\begin{equation}
\label{eqn.semiampleoverfieldtoschemeTFFp}
\cI_F^i\cO_{\mathfrak{Y}}\eqto \varprojlim \pi_*(\cI_E^i/\cI_E^n),
\end{equation}
for every $i$.

The claim is now immediate from \eqref{eqn.semiampleoverfieldtoschemeTFF}
and \eqref{eqn.semiampleoverfieldtoschemeTFFp}.
The claim implies
that for every $n$ we have an isomorphism
\begin{equation}
\label{eqn.semiampleoverfieldtoschemeTFFb}
\cI_F^{n-1}/\cI_F^n\eqto \psi_*(\cI_E^{n-1}/\cI_E^n).
\end{equation}
The sheaf on the right is locally free by cohomology and base change,
hence $Y$ is normally flat along $F$.
The statement concerning the normal cone in the
proposition is immediate from \eqref{eqn.semiampleoverfieldtoschemeTFFb}.
The conclusion of the proof is exactly as in
Proposition \ref{prop.semiampleoverfieldtopoint}.
\end{proof}

The general result and examples are valid as well in a relative setting.
If $S$ is a Noetherian scheme and $X$ is a scheme, proper over $S$, then
we call a line bundle $L$ on $X$ \emph{relatively semiample} if over every
affine open subscheme of $S$ some positive power of $L$ is globally generated.
Equivalently, there exists $m>0$ such that over every affine open subscheme
$L^m$ is globally generated.
The set of natural numbers $m$ with this property will be denoted by $M(X/S,L)$,
or just $M(X,L)$ when no confusion will arise.
There is, as before, a well-defined exponent of $M(X/S,L)$.

\begin{prop}
\label{prop.semiampleoverS}
Let $X$ be a scheme with a proper morphism
$\varphi\colon X\to S$ to a Noetherian scheme $S$,
relatively semiample line bundle $L$, and surjective morphism of sheaves
$\varphi^*\cE\to L^m$, for some $m>0$, and coherent sheaf $\cE$ on $S$, thereby
inducing a morphism 
\[
\psi\colon X\to \Proj(\Sym^\bullet \cE).
\]
Let $Y=\psi(X)$, and write the Stein factorization of $\psi$ as
proper morphism
\[ \pi\colon X\to X', \]
followed by finite morphism $f\colon X'=\Spec(\psi_*\cO_X)\to Y$.
We let $\cO_Y(1)$, respectively $\cO_{X'}(1)$ denote the restriction to $Y$,
respectively the pull-back to $X'$ of $\cO_{\Proj(\Sym^\bullet \cE)}(1)$, so
$\pi^*\cO_{X'}(1)\cong L^m$.
Then:
\begin{itemize}
\item[(i)] The morphism $\pi\colon X\to X'$ is proper and
induces an isomorphism $\cO_{X'}\eqto\pi_*\cO_X$.
\item[(ii)] For all $n\gg 0$,
the line bundle $\cO_{X'}(n)$ is relatively very ample over $S$,
and for any positive integer $n$ such that $\cO_{X'}(n)$ is
relatively very ample, the morphism
$X\to \Proj(\Sym^\bullet\varphi_*(L^{mn}))$
determined by the direct image of $L^{mn}$
factors uniquely through $X'$, identifying $X'$ with the image of $X$ in
$\Proj(\Sym^\bullet\varphi_*(L^{\otimes n}))$.
\item[(iii)] With the property in \emph{(ii)} the scheme $X'$, with
$\pi\colon X\to X'$, is determined uniquely up to canonical isomorphism,
independently of the choice of $m$, coherent sheaf $\cE$, and morphism
$\varphi^*\cE\to L^m$, and the powers of $L$ that are isomorphic to the
pull-back of a line bundle from $X'$ are precisely all multiples of the
exponent of $M(X/S,L)$.
\item[(iv)] If $\varphi$ is flat and
there is a positive integer $n$ such that for all $i>0$,
\[ R^i\varphi_*(L^n)=R^i\varphi_*(L^{2n})=\dots=0, \]
then the corresponding direct image sheaves are locally free,
their formation commutes with arbitrary base change $S'\to S$,
when $S'$ is Noetherian the
pull-back of $L$ to $S'\times_SX$ determines the contraction
$S'\times_SX\to S'\times_SX'$, and $X'$ is flat over $S$.
\end{itemize}
\end{prop}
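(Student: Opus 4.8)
The plan is to obtain (i)--(iii) by transcribing, into the relative setting, the proof of Proposition~\ref{prop.semiampleoverfield}, and then to deduce (iv) from cohomology and base change. For (i), $\pi$ is by definition the first factor of the Stein factorization of the proper morphism $\psi$ (proper because $X$ is proper over $S$ and $\Proj(\Sym^\bullet\cE)$ is separated over $S$), so properness of $\pi$, finiteness of $f$, and the isomorphism $\cO_{X'}\eqto\pi_*\cO_X$ are the defining properties of Stein factorization \cite[4.3.1]{EGAIII}. For (ii), since $\cO_Y(1)$ is relatively ample over $S$ and $f$ is finite, $\cO_{X'}(1)=f^*\cO_Y(1)$ is relatively ample over $S$, so $\cO_{X'}(n)$ is relatively very ample for $n\gg0$ by the relative forms of \cite[4.4.10(ii), 5.1.6]{EGAII}; and the isomorphism $\cO_{X'}\cong\pi_*\cO_X$ together with the projection formula gives $\varphi'_*(\cO_{X'}(n))\cong\varphi_*(L^{mn})$, where $\varphi'\colon X'\to S$ is the structure morphism, so that a relatively very ample $\cO_{X'}(n)$ induces $X\to\Proj(\Sym^\bullet\varphi_*(L^{mn}))$ and factors it uniquely through $X'$ exactly as in the absolute case. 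For (iii), I would repeat the comparison argument of Proposition~\ref{prop.semiampleoverfield}(iii), replacing very ample by relatively very ample throughout: two choices of generating data are compared through sufficiently divisible relatively very ample powers of the respective $\cO_{X'}(1)$, the Veronese/proportionality compatibility shows the resulting isomorphism of images is independent of the chosen power, and the final assertion is the relative exponent computation.

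The substance lies in (iv), and the key point I would isolate is the following base-change lemma. \emph{Let $\varphi\colon X\to S$ be proper with $S$ Noetherian, and let $\cF$ be a coherent sheaf on $X$, flat over $S$, such that $R^i\varphi_*\cF=0$ for all $i>0$. Then $\varphi_*\cF$ is locally free, its formation commutes with arbitrary base change, and $H^i(X_y,\cF_y)=0$ for every $y\in S$ and every $i>0$.} To prove this, I would work locally on $S$ and represent $R\varphi_*\cF$ by a bounded complex $K^\bullet$ of finite locally free $\cO_S$-modules whose formation commutes with base change \cite[6.10.5]{EGAIII}. The hypothesis says $H^i(K^\bullet)=0$ for $i>0$; truncating from the top degree (the top differential is a surjection onto a locally free sheaf, hence splits, so the complex may be shortened without altering its cohomology) reduces $K^\bullet$ to a single locally free sheaf placed in degree $0$. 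Base change and fiberwise vanishing then follow because $R\varphi_*\cF\simeq(\varphi_*\cF)[0]$ is perfect and concentrated in degree $0$.

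With the lemma in hand, I would apply it to $\cF=L^{jn}$ for each $j\ge1$ (the hypotheses hold by assumption and by $S$-flatness of the line bundles $L^{jn}$), obtaining that every $\varphi_*(L^{jn})$ is locally free with base-change-compatible formation and with vanishing higher cohomology on fibers. Consequently the sheaf of graded $\cO_S$-algebras $\bigoplus_{j\ge0}\varphi_*(L^{jn})$ has locally free graded pieces whose formation commutes with base change; replacing $n$ by a suitable multiple (permissible, since the vanishing then persists) so that $n$ is a multiple of the exponent of $M(X/S,L)$, its relative $\Proj$ is identified with $X'$ by (ii)--(iii). Therefore the formation of $X'$ together with the contraction $\pi$ commutes with arbitrary Noetherian base change $S'\to S$, the pulled-back $L$ inducing the contraction $S'\times_SX\to S'\times_SX'$. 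Finally, since $\varphi'_*(\cO_{X'}(n))\cong\varphi_*(L^{mn})$ is locally free and base-change compatible for all large $n$, the fibers $X'_y$ have locally constant Hilbert polynomial, whence $\varphi'$ is flat by the standard criterion for flatness of a projective morphism in terms of the fiberwise Hilbert polynomial.

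The main obstacle I anticipate is exactly the base-change lemma underlying (iv): extracting local freeness and base-change compatibility of $\varphi_*\cF$ from the mere vanishing of the higher direct images. The delicate point is that a perfect complex with cohomology in a single degree is quasi-isomorphic to a locally free sheaf only when that degree is the lowest one realized, so the truncation must be carried out from the top; once this is done correctly the remaining assembly into the graded algebra and the passage to flatness of $X'$ are routine.
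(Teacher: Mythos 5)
Your handling of (i)--(iii) coincides with the paper's: the paper likewise disposes of these by transcribing the proof of Proposition \ref{prop.semiampleoverfield} into the relative setting, and for the first statements of (iv) it simply cites ``cohomology and base change'' for the local freeness and base-change compatibility of the sheaves $\varphi_*(L^{jn})$ --- your Grothendieck-complex truncation argument is a correct expansion of that citation, including the correct observation that the splitting must be carried out from the top degree. The genuine problem is your final step, the flatness of $X'$ over $S$. The ``standard criterion for flatness of a projective morphism in terms of the fiberwise Hilbert polynomial'' is valid only over an integral (or reduced) Noetherian base; here $S$ is an arbitrary Noetherian scheme, possibly non-reduced, and over such a base the criterion is false. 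For instance, with $S=\Spec(k[\epsilon]/(\epsilon^2))$ and $X=V(\epsilon x)\subset\PP^1_S$, the unique fiber is all of $\PP^1_k$, so constancy of the fiberwise Hilbert polynomial holds vacuously, yet $\cO_X$ has $\epsilon$-torsion and $X$ is not flat over $S$. So as written this step fails in exactly the generality the proposition demands.

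The repair uses only what you have already proved, and it is what the paper does: flatness is read off directly from the $\Proj$ description. The paper writes $X'\cong\Proj\big(\cO_S\oplus\varphi_*(L^{\otimes n})\oplus\varphi_*(L^{\otimes 2n})\oplus\dots\big)$; note that the degree-zero piece is $\cO_S$ rather than $\varphi_*\cO_X$ --- your hypothesis gives no control over $R^i\varphi_*\cO_X$, so your claim that all graded pieces of $\bigoplus_{j\ge0}\varphi_*(L^{jn})$ are locally free and base-change compatible is unjustified at $j=0$, though harmless, since two graded algebras agreeing in degrees $\ge1$ have the same $\Proj$. Once all positive-degree pieces are locally free, each affine chart $D_+(f)$ of this $\Proj$ is the $\Spec$ of a filtered colimit of locally free $\cO_S$-modules (transition maps given by multiplication by $f$), hence of a flat $\cO_S$-module, so $X'\to S$ is flat; alternatively one may invoke the criterion valid over any Noetherian base, namely that a coherent sheaf on a projective $S$-scheme is $S$-flat if and only if the direct images of all sufficiently high twists are locally free. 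Your treatment of the remaining assertion of (iv) --- choosing a common multiple of $n$ so that the identification of the contraction with the $\Proj$ of the section algebra holds both for $X$ over $S$ and for $S'\times_SX$ over $S'$ --- is the same as the paper's.
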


\begin{proof}
As in Proposition \ref{prop.semiampleoverfield},
(i) recalls a basic property of
the Stein factorization.
For (ii), we have as before the first statment by standard facts on very ample
line bundles.
Letting $\rho\colon X'\to S$ denote the structure morphism, using the
isomorphism $\cO_{X'}\cong \pi_*\cO_X$ we have
$\varphi_*(L^{\otimes n})\cong \rho_*(\cO_{X'}(n))$, and this determes compatible
morphisms to its projectivization from $X$ and from $X'$, the latter a closed immersion.
For (iii) the argument is as in Proposition \ref{prop.semiampleoverfield}.
The statements concerning the direct image sheaves in (iv) follow from
cohomology and base change, and the flatness from
\[ X'\cong \Proj(\cO_S\oplus \varphi_*(L^{\otimes n})\oplus \varphi_*(L^{\otimes 2n})\oplus \dots). \]
The remaining assertion follows from these facts, using that
some multiple of $n$
satisfies the condition in (ii) both for $X$ and for $S'\times_SX$.
\end{proof}

\begin{prop}
\label{prop.semiampleoverStoscheme}
Let $S$ be a Noetherian scheme, and
let $X$ be a projective scheme over $S$ (resp.\ a projective flat scheme over $S$) with
relatively ample line bundle $L_0$ and effective Cartier divisor $E\subset X$.
Suppose for some $m>0$, the line bundle $L_0|_E\otimes (N_{E/X})^m$
over affine open subsets of $S$
is semiample and determines a flat contraction
$\psi\colon E\to F$.
Suppose, furthermore, $N_{E/X}^\vee$ is ample relative to $\psi$ and
on fibers is normally generated with vanishing $H^1$
of all positive powers (resp.\ vanishing $H^i$ for all $i>0$ of all nonnegative powers).
Then $L:=L_0(mE)$ is semiample with exponent of $M(X,L)$
equal to exponent of $M(E,L_0|_E\otimes(N_{E/X})^m)$ over
affine open subsets of $S$ and determines a contraction
$X\cong B\ell_FY\to Y$ of $E$ to $F$
(resp.\ a contraction $X\cong B\ell_FY\to Y$ of $E$ to $F$ with $Y$ flat
over $S$, whose formation commutes with arbitrary base change $S'\to S$ with $S'$ Noetherian)
such that $Y$ is normally flat along $F$
with normal cone
\[ \Spec\Big(\bigoplus_{n\ge 0} \psi_*(N_{E/X}^\vee)^n\Big) \]
\end{prop}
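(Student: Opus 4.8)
The plan is to carry out the proof of Proposition \ref{prop.semiampleoverfieldtoscheme} over the base, substituting the relative contraction theory of Proposition \ref{prop.semiampleoverS} for the field-theoretic Proposition \ref{prop.semiampleoverfield} and replacing each cohomology group $H^i$ by the corresponding higher direct image $R^i\varphi_*$. Because semiampleness of $L$, the value of the exponent of $M(X,L)$, normal flatness of $Y$ along $F$, and the identification of the normal cone are all local on $S$, I would first reduce to $S=\Spec(A)$ affine, where these direct images agree with cohomology groups; the additional flatness and base-change assertions in the flat case will then be extracted from Proposition \ref{prop.semiampleoverS}(iv).

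After replacing $L_0$ by a sufficiently divisible power I would arrange that $L_0$ is relatively very ample, $R^1\varphi_*L_0=0$, $m\ge 2$, and $L|_E$ is relatively globally generated, so that it defines $\psi\colon E\to F$. The vanishing $R^1\varphi_*\big(L_0|_E\otimes(N_{E/X})^j\big)=0$ for $0\le j<m$ is obtained exactly as in Proposition \ref{prop.semiampleoverfieldtoscheme}: Lemma \ref{lem.Hivanishpairample} applied to the relatively ample pairs $L_0|_E\otimes(N_{E/X})^{a-1}$ and $L_0|_E\otimes(N_{E/X})^{a}$ for $1\le a\le m-1$, together with Lemma \ref{lem.Hivanishampleggen} applied to the semiample $L|_E$ and the ample $L_0|_E\otimes(N_{E/X})^{m-1}$, the latter invoking the hypothesis that $H^1$ of positive powers of $N_{E/X}^\vee$ vanishes on the fibers of $\psi$. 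Feeding this into the exact sequences
\[
0\to L_0((j-1)E)\to L_0(jE)\to L_0|_E\otimes(N_{E/X})^j\to 0
\]
yields $R^1\varphi_*\big(L_0(jE)\big)=0$ for $0\le j<m$, hence the relative global generation of $L$ and a surjection $\varphi^*\varphi_*L^m\to L^m$. Proposition \ref{prop.semiampleoverS} then supplies the contraction $\pi\colon X\to Y$, its uniqueness up to canonical isomorphism, and the stated value of the exponent.

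The normal flatness of $Y$ along $F$ and the description of the normal cone follow by repeating the Theorem-on-Formal-Functions argument of Proposition \ref{prop.semiampleoverfieldtoscheme} over the affine base, which is insensitive to whether the base is a field or an affine scheme. Lemma \ref{lem.ngfamily} gives the surjectivity of the multiplication maps $\psi_*(\cI_E/\cI_E^2)\otimes\psi_*(\cI_E^{j-1}/\cI_E^j)\to\psi_*(\cI_E^j/\cI_E^{j+1})$, cohomology and base change gives $R^1\psi_*(\cI_E^j/\cI_E^{j+1})=0$, and the inductive bookkeeping with inverse limits produces the isomorphisms $\cI_F^{n-1}/\cI_F^n\eqto\psi_*(\cI_E^{n-1}/\cI_E^n)$; the right-hand sheaves are locally free, again by cohomology and base change, whence $Y$ is normally flat along $F$ with the asserted normal cone.

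The main obstacle is the flat case, where one must in addition produce flatness of $Y$ over $S$ and compatibility of the contraction with Noetherian base change. For this I would verify the hypothesis of Proposition \ref{prop.semiampleoverS}(iv), namely $R^i\varphi_*(L^N)=0$ for all $i>0$ and all $N$ in a fixed arithmetic progression. Running the filtration $L_0^N(jE)$, $0\le j\le mN$, reduces this, via the exact sequences above with $L_0$ replaced by $L_0^N$, to the vanishing $R^i\varphi_*\big(L_0^N|_E\otimes(N_{E/X})^j\big)=0$ for all $i>0$, a sheaf supported on $E$; on the fibers of $\psi$ the twist restricts to the nonnegative power $(N_{E/X}^\vee)^{mN-j}$, so the strengthened hypothesis (vanishing of $H^i$, $i>0$, of all nonnegative powers of $N_{E/X}^\vee$ on these fibers) applies. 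The combination of Lemmas \ref{lem.Hivanishpairample} and \ref{lem.Hivanishampleggen} used above, now with the index $c$ taken larger than the relative dimension so as to capture every positive $i$, delivers this higher vanishing along $\varphi|_E\colon E\to S$, while the boundary term $j=mN$, where $L_0^N|_E\otimes(N_{E/X})^{mN}\cong(L|_E)^N$, is handled directly by Lemma \ref{lem.Hivanishampleggen} applied to the semiample $L|_E$. With the hypothesis of Proposition \ref{prop.semiampleoverS}(iv) in hand, that proposition yields local freeness of the direct images, flatness of $Y$ over $S$, and commutation of the contraction with base change, completing the proof.
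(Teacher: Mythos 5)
Your proposal is correct and takes essentially the same route as the paper: reduce to affine $S$, rerun the argument of Proposition \ref{prop.semiampleoverfieldtoscheme} relatively over $S$ (using Lemmas \ref{lem.Hivanishpairample}, \ref{lem.Hivanishampleggen}, and \ref{lem.ngfamily} exactly as you indicate), and in the flat case use the strengthened vanishing hypothesis to obtain $R^i\varphi_*(L^N)=0$ for all $i>0$ along the filtration $L_0^N(jE)$, $0\le j\le mN$, so that Proposition \ref{prop.semiampleoverS}(iv) delivers flatness of $Y$ and compatibility with Noetherian base change. The paper's own proof is a terse version of precisely this argument (it cites part (iii) of Proposition \ref{prop.semiampleoverS} where, as you correctly identify, part (iv) is what yields the flatness and base-change conclusions).
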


\begin{proof}
We may suppose $S$ is affine.
Now we argue as in the proof of Proposition \ref{prop.semiampleoverfieldtoscheme},
relative over $S$.
In the flat case with the stronger hypothesis of cohomology vanishing we have
\eqref{eqn.semiampleoverfieldtoschemeH1} also for $j=m$, also with
$L_0$ replaced by $L_0^n$ for any $n>0$ and all $0\le j\le mn$, and also
with any $H^i$ ($i>0$) in place of $H^1$.
Hence $H^i(X,L^n)=0$ for all $i>0$ and $n>0$.
Proposition \ref{prop.semiampleoverS}(iii) gives us the desired conclusion.
\end{proof}

\begin{rema}
\label{rem.semiampleoverStoschemei}
Proposition \ref{prop.semiampleoverStoscheme} is a
strict generalization of Proposition \ref{prop.semiampleoverfieldtoscheme},
which in turn generalizes Proposition \ref{prop.semiampleoverfieldtopoint};
the inclusion of all three results serves expository purposes only.
\end{rema}

\begin{rema}
\label{rem.semiampleoverStoschemeii}
In Proposition \ref{prop.semiampleoverfieldtopoint}, respectively
\ref{prop.semiampleoverfieldtoscheme} and \ref{prop.semiampleoverStoscheme},
the morphism $\pi\colon X\to Y$ induces by pullback and direct image an
equivalence of categories between locally free coherent sheaves on $Y$ and
locally free coherent sheaves on $X$ whose restriction to $E$ is free,
respectively isomorphic to $\psi^*$ of a locally free coherent sheaf on $F$.
That $\pi_*$ of such a sheaf $\cF$ is locally free
follows from the Theorem on Formal Functions and observations concerning
$H^0(X,\cI_E^i\cF/\cI_E^j\cF)$, respectively $H^0(U,\cI_E^i\cF/\cI_E^j\cF)$,
analogous to those in
\eqref{eqn.semiampleoverfieldtopointsurj}--\eqref{eqn.semiampleoverfieldtopointmoregenrl},
respectively
\eqref{eqn.semiampleoverfieldtoschemesurj}--\eqref{eqn.semiampleoverfieldtoschememoregenrl}.
\end{rema}

\begin{rema}
\label{rem.semiampleoverStoschemeiii}
In Proposition \ref{prop.semiampleoverStoscheme} (without the assumption
that $X\to S$ is flat and stronger cohomology vanishing from the statement),
$\pi\colon X\to Y$ is characterized by the following universal
property: for any $S$-scheme $Z$, $S$-morphisms $Y\to Z$ are
by composition with $\pi$ in bijection with $S$-morphisms $X\to Z$ whose
restriction to $E$ factors through $F$ (by $\psi$).
(Such a factorization, if it exists, is unique by \cite[VIII.5.1(a)]{SGA1}.)
Indeed, such an $S$-morphism $X\to Z$ induces a unique
compatible map $Y\to Z$ on the level of sets, continuous
since $\pi$ is proper and surjective and hence a subset $U\subset Y$ is
open if and only if $\pi^{-1}(U)\subset X$ is open, and
with $\pi$ being a contraction, there is a unique compatible morphism of
structure sheaves.
In fact, we may allow $Z$ to be an algebraic space;
then we need the observation that given compatible separated \'etale
morphisms of schemes $X'\to X$, $E'\to E$, and $F'\to F$, the functor
on $Y$-schemes that sends $\widetilde{Y}\to Y$, determining
$\widetilde{X}\to X$, $\widetilde{E}\to E$, and $\widetilde{F}\to F$ by base change,
to the set of compatible lifts $\widetilde{X}\to X'$, $\widetilde{E}\to E'$, and $\widetilde{F}\to F'$,
is representable by a scheme $Y'$, separated and \'etale over $Y$.
Already the functor sending $\widetilde{Y}\to Y$ to the set of
lifts $\widetilde{F}\to F'$ is representable by an algebraic space,
\'etale over $Y$ \cite[\S 1.1]{mustatamustata} \cite[\S 5.1]{alperkresch},
so we may suppose $F'=F$ (and $E'=E$).
Then the observation follows from a general result of Artin on contractions
\cite[Thm.\ 3.1]{artinalgebraizationII} and the property
of separated \'etale morphisms of algebraic spaces
that if the target is a scheme then so is the source \cite[Cor.\ II.6.17]{knutson}.
The universal property in this level of generality
compares nicely with earlier results, e.g., in \cite{mazurcontractions}.
\end{rema}

\bibliographystyle{plain}
\bibliography{pn}

\end{document}